\numberwithin{equation}{section}
\providecommand{\abs}[1]{\left\vert#1\right\vert}
\providecommand{\norm}[1]{\left\Vert#1\right\Vert}
\providecommand{\ip}[1]{\left(#1 \right)}
\def\nab{\nabla}
\def\hal{\frac{1}{2}}
\def\ep{\varepsilon}
\def\ls{\lesssim}
\def\p{\partial}
\def\H1{{_0}H^1((-\ell,\ell))}
 \def\bcr{\begin{color}{red}}
\def\bcb{\begin{color}{blue}}
\def\bcc{\begin{color}{magenta}}
\def\ec{\end{color}}
\providecommand{\abs}[1]{\left\vert#1\right\vert}
\providecommand{\norm}[1]{\left\Vert#1\right\Vert}
\def\hal{\frac{1}{2}}
\def\ep{\varepsilon}
\def\ls{\lesssim}
\def\nab{\nabla}
\def\p{\partial}
\def\SH0{\mathcal{H}^0((-\ell,\ell))}
\newcommand{\m}{\mathrm}
\def\pint{\m{P}_{\m{int}}}
\def\pfint{\mathfrak{P}_{\m{int}}}
\def\hint{\upeta_{\m{int}}}
\def\pext{\m{P}_{\m{ext}}}
\def\H{\mathcal{H}}
\def\h{\mathcal{H}}
\def\L{\mathcal{L}}
\def\N{\mathbb{N}}
\def\R{\mathbb{R}}
\def\acl{AC_{\m{loc}}}
\def\st{\;\vert\;}
\def\XXint#1#2#3{{\setbox0=\hbox{$#1{#2#3}{\int}$ }
\vcenter{\hbox{$#2#3$ }}\kern-.6\wd0}}
\DeclareMathOperator{\spn}{span}
\DeclareMathOperator{\ran}{ran}
\newtheorem{lem}{Lemma}[section]
\newtheorem{prop}[lem]{Proposition}
\newtheorem{thm}[lem]{Theorem}
\newtheorem{dfn}[lem]{Definition}
\title[Hydrostatic bubbles]{Hydrostatic bubbles of compressible fluid in an incompressible fluid}
\author{Juhi Jang}
\address{
Department of Mathematics \\
University of Southern California\\
Los Angeles, CA 90089, USA \\
and 
Korea Institute for Advanced Study \\
Seoul, Korea.
}
\email[J. Jang]{juhijang@usc.edu}
\thanks{J. Jang was supported by an NSF Grant (DMS \#2306910)}
\author{Ian Tice}
\address{
Department of Mathematical Sciences\\
Carnegie Mellon University\\
Pittsburgh, PA 15213, USA
}
\email[I. Tice]{iantice@andrew.cmu.edu}
\thanks{I. Tice was supported by an NSF Grant (DMS \#2508400)}
\subjclass[2010]{Primary 35R35, 35J93, 76T10; Secondary 46J15, 46E35, 34C23}
\keywords{Hydrostatic bubbles, weighted Sobolev spaces, Legendre operator}
\begin{document}

\begin{abstract} 
We study stationary configurations of compressible barotropic fluids lying inside an incompressible fluid and acted upon by a constant gravitational field.  Without gravity, it is a simple matter to construct solutions consisting of perfectly spherical bubbles of compressible fluid, but with gravity the stratification of the compressible fluid's density prevents the existence of solutions with such simple geometries.  We construct smooth solutions with nontrivial gravity and nearly spherical, axisymmetric geometry by means of a bifurcation argument in an appropriate weighted Sobolev space, defined in terms of the Legendre operator.  Along the way, we develop a number of essential linear and nonlinear functional analytic tools for this scale of spaces, which are of independent interest. 
\end{abstract}

\maketitle

%%%%%%%%%%%%%%%%%%%%%%%%%%%%%%%%%%%%%%%%%%%%%%%%%
%%%%%%%%%%%%%%%%%%%%%%%%%%%%%%%%%%%%%%%%%%%%%%%%%
%%%%%%%%%%%%%%%%%%%%%%%%%%%%%%%%%%%%%%%%%%%%%%%%%
\section{Introduction }
%%%%%%%%%%%%%%%%%%%%%%%%%%%%%%%%%%%%%%%%%%%%%%%%% 
%%%%%%%%%%%%%%%%%%%%%%%%%%%%%%%%%%%%%%%%%%%%%%%%%
%%%%%%%%%%%%%%%%%%%%%%%%%%%%%%%%%%%%%%%%%%%%%%%%% 
 
%%%%%%%%%%%%%%%%%%%%%%%%%%%%%%%%%%%%%%%%%%%%%%%%%
%%%%%%%%%%%%%%%%%%%%%%%%%%%%%%%%%%%%%%%%%%%%%%%%%
\subsection{Overview }
%%%%%%%%%%%%%%%%%%%%%%%%%%%%%%%%%%%%%%%%%%%%%%%%% 
%%%%%%%%%%%%%%%%%%%%%%%%%%%%%%%%%%%%%%%%%%%%%%%%%

In this paper we study hydrostatic bubbles of compressible fluid within an incompressible fluid in the presence of a constant gravitational field.     The two fluids together fill a given and fixed open set $\Omega_{\m{tot}} \subseteq \R^3$, which we assume decomposes into the disjoint union 
\begin{equation}
\Omega_{\m{tot}} = \Omega_{\m{ext}} \cup \overline{\Omega_{\m{int}}} = \Omega_{\m{ext}} \cup \Omega_{\m{int}} \cup \Sigma,    
\end{equation}
with $\Omega_{\m{ext}}$ an open set occupied by the incompressible fluid and  $\Omega_{\m{int}}$ an open, bounded, and simply connected set occupied by the compressible fluid.  In addition, we will assume that $\overline{\Omega_{\m{int}}}$ is a smooth manifold with boundary, so $\Sigma = \p \Omega_{\m{int}} \subset \R^3$  is then a smooth, compact two-dimensional submanifold.  This is a free boundary problem, as the sets $\Omega_{\m{int}}$ and $\Omega_{\m{ext}}$ are not specified a priori and must be constructed.  In doing so we will solve for $\overline{\Omega_{\m{int}}} \subset \Omega_{\m{tot}}$ and then define $\Omega_{\m{ext}} = \Omega_{\m{tot}} \backslash \overline{\Omega_{\m{int}}}$, which justifies our notation indicating the compressible bubble is the interior and the incompressible fluid is the exterior.  We emphasize that there are many possibilities for the set $\Omega_{\m{tot}}$: it can be taken to be all of $\R^3$, a bounded open set modeling a container with rigid walls, a horizontally infinite layer, or something more complicated.  Regardless, we will always assume the coordinate system in $\R^3$ is chosen so that $0 \in \Omega_{\m{int}} \subset  \Omega_{\m{tot}} $.  It will be useful to define 
\begin{equation}\label{R_max_def}
 R^{\m{max}} = \sup\{ s >0 \st s \mathbb{B}^3 = B(0,s) \subset \Omega_{\m{tot}}   \} \in (0,\infty],
\end{equation}
where $\mathbb{B}^3= B(0,1)$ is the open unit ball in $\R^3$.

Our interest in this paper is hydrostatic solutions acted upon by a constant gravitational field $-g e_3 = (0,0,-g) \in \R^3$, which we take to mean that both fluids have trivial velocity fields.   In turn, this means that viscous effects are irrelevant and that the fluid configurations are determined solely by their density and pressure profiles, together with a jump condition involving the curvature of $\Sigma$, known as the Laplace-Young equation:
\begin{equation}\label{hydrostatic_eqns}
\begin{cases}
\nab \pext = - \rho_{\m{ext}} g e_3 &\text{in } \Omega_{\m{ext}} \\ 
\nab [\pint \circ \rho_{\m{int}}] =  - \rho_{\m{int}} g e_3 &\text{in } \Omega_{\m{int}}  \\
\sigma \mathscr{K}_{\Sigma} = \pint\circ \rho_{\m{int}} - \pext &\text{on } \Sigma.
\end{cases}
\end{equation}
The various terms appearing in \eqref{hydrostatic_eqns} are as follows.  The parameter $g \in \R$ is the gravitational constant; in principle we could consider only $g \ge 0$ to give the gravitational field a preferred downward direction, but the theory we develop in this paper works more generally.  The incompressible fluid's density is the given constant $\rho_{\m{ext}}  \in \R_+ := (0,\infty)$, and its pressure is the unknown $\pext : \Omega_{\m{ext}} \to \R$.  The compressible fluid's density is the unknown function $\rho_{\m{int}} : \Omega_{\m{int}} \to \R_+$, which we assume determines its pressure through a barotropic equation of state.  More precisely, we assume we are given  a smooth function  $\pint : \R_+ \to \R_+$ satisfying $\pint' >0$ and $\lim_{t \to 0} \pint(t) =0$; then the pressure in the compressible fluid is $\pint \circ \rho_{\m{int}} : \Omega_{\m{int}} \to \R_+$.   Finally, the term $\sigma \in \R_+$ is the coefficient of surface tension, and $\mathscr{K}_{\Sigma}$ is the total curvature (twice the mean curvature) of the surface $\Sigma$, normalized so that $\mathscr{K}_{\mathbb{S}^2} = 2$ for $\mathbb{S}^2 =\p \mathbb{B}^3 \subset \R^3$ the  unit sphere.

It turns out to be trivial to solve for $\pext$ satisfying the first equation in \eqref{hydrostatic_eqns} in all of $\R^3$ and relatively easy to solve for $\rho_{\m{int}}$ satisfying the second equation in a slab-like set of the form $\R^2 \times [-r,r]$ for $0 < r < R^{\m{max}}$.  With these in hand, our task will then reduce   to constructing $\overline{\Omega_{\m{int}}} \subset [ \R^2 \times [-r,r]] \cap \Omega_{\m{tot}}$ satisfying the third equation in \eqref{hydrostatic_eqns}.  Before further discussing this task, we justify these solvability claims.

The first equation in \eqref{hydrostatic_eqns} is solved by $\pext : \R^3 \to \R$ given by 
\begin{equation}\label{pext_def}
 \pext(x) = \pext^\ast - g \rho_{\m{ext}} x_3 
\end{equation}
for a given reference pressure $\pext^\ast \in \R_+$.  To solve for $\rho_{\m{int}}$ we must first introduce some notation.  It will be convenient to write $\pint^{\m{max}} = \sup_{t > 0} \pint(t) \in (0, \infty] $ so that $\pint$ is a smooth, increasing diffeomorphism from $\R_+$ to $\pint(\R_+) = (0,\pint^{\m{max}})$.  Next, we define the smooth enthalpy function $\hint : \R_+ \to\R$ via 
\begin{equation}
 \hint(z) =  \int_{\rho_{\m{ext}}}^z t^{-1} \pint'(t) dt.
\end{equation}
Note that $\hint' >0$, and so $\hint$ is a smooth, increasing diffeomorphism from $\R_+$ to  $\hint(\R_+)  = (\hint^{\m{\min}}, \hint^{\m{max}})$, where 
\begin{equation}
-\infty \le  \hint^{\m{min}} = - \int_{0}^{\rho_{\m{ext}}} t^{-1} \pint'(t) dt <0  < \int_{\rho_{\m{ext}}}^\infty t^{-1} \pint'(t) dt  =\hint^{\m{max}} \le \infty. 
\end{equation}
Note also that we have chosen the normalization 
\begin{equation}\label{enthalpy_normalization}
\hint(\rho_{\m{ext}}) = 0. 
\end{equation}
The point of introducing $\hint$ is that the equation 
\begin{equation}
 \nab[\pint\circ \rho_{\m{int}}] = - g \rho_{\m{int}} e_3
\end{equation}
reduces to 
\begin{equation}
  \hint \circ \rho_{\m{int}}(x)  =\alpha -gx_3
\end{equation}
for some arbitrary $\alpha \in \R$.  We can then solve for $\rho_{\m{int}} : \R^2 \times [-r,r] \to \R_+$ via 
\begin{equation}\label{rhoint_def}
 \rho_{\m{int}}(x) = \hint^{-1}(\alpha - g x_3),
\end{equation}
provided that  $\alpha \in \hint(\R_+)$, $0 < r < R^{\m{max}}$, and 
\begin{equation}\label{rhoint_def_compat}
 \alpha - g [-r,r] \subseteq \hint(\R_+).
\end{equation}
This suggests that we introduce  the smooth, increasing diffeomorphism $\pfint : (\hint^{\m{\min}}, \hint^{\m{max}}) \to (0,\pint^{\m{max}})$ via $\pfint = \pint \circ \hint^{-1}$.  Indeed, with this notation in hand we then find that the pressure in the compressible fluid is 
\begin{equation}\label{pint_pfint_def}
 \pint \circ \rho_{\m{int}}(x) = \pfint(\alpha -gx_3).
\end{equation}

\subsection{Spherical bubbles}\label{subsec:1.2}

When we neglect gravity by setting $g =0$ in \eqref{hydrostatic_eqns}, then \eqref{pext_def}, \eqref{rhoint_def}, and \eqref{pint_pfint_def} reduce to the constants
\begin{equation}\label{g0_soln}
 \pext = \pext^\ast, \ \rho_{\m{int}} = \hint^{-1}(\alpha), \text{ and } \pint \circ \rho_{\m{int}} = \pfint(\alpha)  
\end{equation}
for arbitrary $\alpha \in \hint(\R_+)$, which are defined throughout $\R^3$. In particular, this means that the right-hand side of the third equation in \eqref{hydrostatic_eqns} is constant. By Alexandrov's soap bubble theorem \cite{A_1958}, the only compact, connected, embedded hypersurface with constant mean curvature is a sphere, and hence we must choose $\Omega_{\m{int}} = R_\alpha \mathbb{B}^3 = B(0,R_\alpha)$.  
%and we can then try to solve the third equation in \eqref{hydrostatic_eqns} by choosing $\Omega_{\m{int}} = R_\alpha \mathbb{B}^3 = B(0,R_\alpha)$.  
As the total curvature of a sphere is twice the reciprocal of its radius,  for such a ball we have a solution if and only if the radius  $R_\alpha \in \R_+$ satisfies
\begin{equation}
 \frac{2\sigma}{R_\alpha} = \pfint(\alpha) - \pext^\ast.
\end{equation}
Now, we can solve this for $R_\alpha$ precisely when $\pext^\ast \in (0, \pint^{\m{max}})$, in which case we have a one-parameter family of solutions with 
\begin{equation}\label{Ralpha_def}
 R_\alpha = \frac{2\sigma}{\pfint(\alpha) - \pext^\ast} \text{ for all } \alpha \in (\pfint^{-1}(\pext^\ast)  ,\hint^{\m{max}}).
\end{equation}
The final condition to check in order to arrive at a proper solution to \eqref{hydrostatic_eqns} is that $R_\alpha <  R^{\m{max}}$, where the latter was defined in \eqref{R_max_def}, so that we can indeed guarantee the inclusion $\overline{\Omega_{\m{int}}} \subset \Omega_{\m{tot}}$.  To summarize, when $g=0$ we have a perfectly spherical bubble solution to \eqref{hydrostatic_eqns} with $\Omega_{\m{int}} = R_\alpha \mathbb{B}^3$ and \eqref{g0_soln} precisely when 
\begin{equation}\label{g0_alpha_range}
 \frac{2\sigma}{R^{\m{max}}} + \pext^\ast \in (0,\pint^{\m{max}}) \text{ and } \alpha \in (\pfint^{-1}(\pext^\ast + 2\sigma /R^{\m{max}} )  ,\hint^{\m{max}}).
\end{equation}
As the parameter $\alpha$ increases, the bubble radius decreases and the pressure inside the bubble increases. 

The simple story above changes quite a bit when gravity is nontrivial, i.e. $g \neq 0$.  In this case the vertical stratification of $\pext(x)$ and $\pint\circ \rho_{\m{int}}(x)$ imply that spherical bubble solutions are no longer available.  

\subsection{Main result}

Our main result in this paper establishes that under a minor strengthening of the assumption \eqref{g0_alpha_range}, nearly spherical bubble solutions persist for all choices of $g$ in an open neighborhood of $0$.  The strengthening of \eqref{g0_alpha_range} we need is that 
\begin{equation}\label{necessary_condition}
 \frac{2\sigma}{R^{\m{max}}} + \pext^\ast < \pint(\rho_{\m{ext}}), \text{ or equivalently }  \pfint^{-1}\left(\frac{2\sigma}{R^{\m{max}}} + \pext^\ast\right) < 0 < \hint^{\m{max}},
\end{equation}
where the equivalence follows from the normalization \eqref{enthalpy_normalization}.  We can now state our result after noting that we define the norm on the Banach space $C^m_b([-1,1])$ for $m \in \N$ to be 
\begin{equation}
 \norm{f}_{C^m_b} = \max_{0\le k \le m} \sup_{\abs{\zeta} \le 1} \abs{D^k f(\zeta)}.
\end{equation}

\begin{thm}\label{main_thm}
Assume \eqref{necessary_condition} holds.  Then there exists an open interval $0 \in G \subseteq \R$ and functions $A :G  \to \R$ and $\varphi : G \to C^\infty([-1,1])$ such that the following hold.
\begin{enumerate}
 \item $A(0) =0$, $\pfint^{-1}\left(\frac{2\sigma}{R^{\m{max}}} + \pext^\ast\right) < A(g) < \hint^{\m{max}}$ for all $g \in G$, and $A$ is smooth.
 \item $\varphi(0)=0$, and for any $m \in \N$ the induced map $\varphi : G \to C^m_b([-1,1])$ is smooth.
 \item For every $g \in G$ the map $\Lambda_g : \overline{\mathbb{B}^3} \to \R^3$ defined by $\Lambda_g(x) = (R_{A(g)} + \varphi(g)(x_3)) x$ is a smooth diffeomorphism onto its image.  Moreover, if we set
\begin{equation}
 \Omega_{\m{int},g} = \Lambda_g(\mathbb{B}^3) \text{ and } \Sigma_g = \Lambda_g(\mathbb{S}^2) 
\end{equation}
then $\overline{\Omega_{\m{int},g}}$ is a smooth, compact, simply connected manifold with boundary, $\Sigma_g = \p \overline{\Omega_{\m{int},g}}$ is a smooth, compact, simply connected manifold, and $\overline{\Omega_{\m{int},g}} \subset \Omega_{\m{tot}},$ which then allows us to define the open set $\Omega_{\m{ext},g} = \Omega_{\m{tot}} \backslash \overline{\Omega_{\m{int},g}}$.  In particular, $\Omega_{\m{int},0} = R_0 \mathbb{B}^3$ and $\Sigma_0 = R_0 \mathbb{S}^2$ with $R_0 = 2\sigma / (\pint(\rho_{\m{ext}}) - \pext^\ast )$.

 \item For $g \in G$ we have that $\overline{\Omega_{\m{int},g}} \subset \R^2 \times [-r,r]$, where $R_0 < r < R^{\m{max}}$ is  such that \eqref{rhoint_def_compat} holds for $g \in G$ and $\alpha = A(g)$, and hence $\rho_{\m{int},g} : \overline{\Omega_{\m{int},g}} \to \R_+$ defined by $\rho_{\m{int},g}(x)  = \hint^{-1}(A(g) - gx_3)$ is well-defined and smooth.

 \item For $g \in G$ we have that 
\begin{equation}
\begin{cases}
\nab \pext = - \rho_{\m{ext}} g e_3 &\text{in } \Omega_{\m{ext},g} \\ 
\nab [\pint \circ \rho_{\m{int},g}] =  - \rho_{\m{int},g} g e_3 &\text{in } \Omega_{\m{int},g}  \\
\sigma \mathscr{K}_{\Sigma_g} = \pint\circ \rho_{\m{int},g} - \pext &\text{on } \Sigma_g.
\end{cases}
\end{equation}
\end{enumerate}

\end{thm}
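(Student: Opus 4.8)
The plan is to peel off from \eqref{hydrostatic_eqns} its first two equations, which are already solved by \eqref{pext_def} and \eqref{rhoint_def}, and to treat the remaining Laplace--Young equation on $\Sigma$ by a Lyapunov--Schmidt reduction followed by the implicit function theorem, carried out in the Legendre-weighted Sobolev spaces developed earlier. Writing a point of $\mathbb{S}^2$ as $\omega$ with $\zeta = \omega_3 \in [-1,1]$, the axisymmetric radial graph $\Sigma = \{(R_A + \varphi(\zeta))\,\omega : \omega \in \mathbb{S}^2\}$ attached to a radius $R_A$ (as in \eqref{Ralpha_def}) and profile $\varphi$ has total curvature $\mathscr{K}_\Sigma(\zeta)$ equal to a degenerate second-order elliptic expression in $\varphi$ of Legendre type, and the point of $\Sigma$ with sphere-coordinate $\zeta$ has third coordinate $(R_A+\varphi(\zeta))\zeta$; so, using \eqref{pext_def} and \eqref{pint_pfint_def}, the third equation of \eqref{hydrostatic_eqns} is equivalent to $\Phi(g,A,\varphi) = 0$, where
\begin{equation*}
\Phi(g,A,\varphi)(\zeta) = \sigma\,\mathscr{K}_{\Sigma}(\zeta) - \pfint\big(A - g(R_A+\varphi(\zeta))\zeta\big) + \pext^\ast - g\,\rho_{\m{ext}}(R_A+\varphi(\zeta))\zeta.
\end{equation*}
We regard $\Phi$ as a map from a neighborhood of $(0,0,0)$ in $\R\times\R\times\mathcal{H}^{s+2}$ to $\mathcal{H}^{s}$, where $\mathcal{H}^{s}$ denotes the Legendre-based weighted Sobolev space on $(-1,1)$ and $s$ is large and fixed; by the nonlinear mapping and composition estimates for these spaces, $\Phi$ is smooth. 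The crucial structural fact, coming from the spherical bubbles of Subsection~\ref{subsec:1.2}, is that $\Phi(0,A,0)(\zeta) = \frac{2\sigma}{R_A} - \pfint(A) + \pext^\ast \equiv 0$ for every admissible $A$ (by the very definition of $R_A$), so all $A$-derivatives of $\Phi$ vanish at $(0,A,0)$ and the bifurcation is carried entirely by the linearization in $\varphi$ and the $g$-dependence.

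Differentiating at the base point $(0,0,0)$ --- which corresponds to the sphere of radius $R_0 := R_{A(0)} = 2\sigma/(\pint(\rho_{\m{ext}}) - \pext^\ast)$ --- one finds $D_\varphi\Phi(0,0,0) = \frac{\sigma}{R_0^2}(\mathcal{L} - 2)$, where $\mathcal{L} = -\p_\zeta((1-\zeta^2)\p_\zeta)$ is the Legendre operator; its eigenvalue on the Legendre polynomial $P_\ell$ is $\frac{\sigma}{R_0^2}(\ell-1)(\ell+2)$, so both its kernel and (by self-adjointness) its cokernel equal the line $\spn\{P_1\}$, $P_1(\zeta)=\zeta$ --- the infinitesimal vertical translations of the sphere. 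Accordingly we split $\mathcal{H}^{s} = \spn\{P_1\}\oplus\mathcal{H}^{s}_\perp$, the $\mathcal{H}^0$-orthogonal decomposition (with $\mathcal{H}^0 = L^2((-1,1))$), write $\pi_\perp$ for the projection onto $\mathcal{H}^s_\perp$, and first solve $\pi_\perp\Phi(g,A,\varphi) = 0$ for $\varphi\in\mathcal{H}^{s+2}_\perp$. This succeeds by the implicit function theorem, since the relevant partial derivative at $(0,0,0)$ is $\frac{\sigma}{R_0^2}(\mathcal{L}-2)$ restricted to $\mathcal{H}^{s+2}_\perp$, which is an isomorphism onto $\mathcal{H}^{s}_\perp$ because the eigenvalues $(\ell-1)(\ell+2)$, $\ell\neq 1$, are nonzero and comparable to $1 + \ell(\ell+1)$. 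We obtain a smooth $\varphi = \Psi(g,A)\in\mathcal{H}^{s+2}_\perp$; since the round sphere $R_A\mathbb{S}^2$ solves $\Phi(0,A,\cdot)=0$, local uniqueness forces $\Psi(0,A)=0$.

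It remains to solve the one-dimensional reduced equation $B(g,A) := \langle\Phi(g,A,\Psi(g,A)),P_1\rangle = 0$, for then $\Phi = \pi_\perp\Phi + \norm{P_1}^{-2}B\,P_1$ vanishes. Because $\Psi(0,A)=0$ and $\Phi(0,A,0)\equiv 0$, we have $B(0,A)\equiv 0$, so Hadamard's lemma gives $B(g,A) = g\,\widetilde{B}(g,A)$ with $\widetilde{B}$ smooth and $\widetilde{B}(0,A) = \p_g B(0,A) = \langle\p_g\Phi(0,A,0),P_1\rangle$ --- the term involving $\p_g\Psi$ drops because $D_\varphi\Phi(0,A,0)$ is self-adjoint with $P_1$ in its kernel. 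A direct computation yields $\p_g\Phi(0,A,0)(\zeta) = R_A\big(\pfint'(A) - \rho_{\m{ext}}\big)\zeta$, hence $\widetilde{B}(0,A) = R_A(\pfint'(A) - \rho_{\m{ext}})\norm{P_1}^2$. Now the identities $\pfint' = \hint^{-1}$ and $\pfint'' = 1/(\hint'\circ\hint^{-1}) > 0$, which follow at once from $\pfint = \pint\circ\hint^{-1}$ and $\hint'(t) = t^{-1}\pint'(t)$, together with the normalization $\hint(\rho_{\m{ext}}) = 0$, give $\pfint'(0) = \rho_{\m{ext}}$ and $\pfint''(0) = \rho_{\m{ext}}/\pint'(\rho_{\m{ext}}) > 0$; therefore $\widetilde{B}(0,0) = 0$ while $\p_A\widetilde{B}(0,0) = R_0\,\pfint''(0)\,\norm{P_1}^2 \neq 0$. (This is exactly why $\alpha = 0$, equivalently $A(0) = 0$, is the admissible base value, and where \eqref{necessary_condition} enters, through the requirement $\pfint^{-1}(\tfrac{2\sigma}{R^{\m{max}}} + \pext^\ast) < 0 < \hint^{\m{max}}$ guaranteeing that $0$ lies in the admissible $\alpha$-range.) The implicit function theorem applied to $\widetilde{B}(g,A) = 0$ at $(0,0)$ then produces a smooth $A = A(g)$ with $A(0) = 0$ and $\widetilde{B}(g,A(g)) = 0$, whence $B(g,A(g)) = 0$ and thus $\Phi(g,A(g),\Psi(g,A(g))) = 0$. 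Setting $\varphi(g) := \Psi(g,A(g))$ solves the Laplace--Young equation.

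Finally, for each $g$ the equation is degenerate-elliptic in $\varphi(g)$ with smooth coefficients, so the elliptic regularity theory on the Legendre scale bootstraps $\varphi(g)\in\bigcap_t\mathcal{H}^t \subseteq C^\infty([-1,1])$, and running the implicit function theorem in each $\mathcal{H}^{s+2}$ (matching solutions by uniqueness) shows $g\mapsto A(g)$ and $g\mapsto\varphi(g)$ are smooth into every $\mathcal{H}^t$, hence into every $C^m_b([-1,1])$. The remaining assertions of the theorem are soft: continuity of $A$ with $A(0)=0$ and the strictness in \eqref{necessary_condition} place $A(g)$ in the stated range for $g$ near $0$; smallness of $\varphi(g)$ in $C^1$ makes $\Lambda_g$ a diffeomorphism onto its image and keeps $\overline{\Omega_{\m{int},g}}$ inside a ball $B(0,R')\subseteq B(0,R^{\m{max}})\subseteq\Omega_{\m{tot}}$ with $R_0 < R' < R^{\m{max}}$; one fixes $r$ with $R_0 < r < R^{\m{max}}$ and \eqref{rhoint_def_compat} valid at $(g,\alpha)=(0,0)$ and then shrinks $G$ so it stays valid at $\alpha = A(g)$; and the first two equations of \eqref{hydrostatic_eqns} hold by the constructions \eqref{pext_def}, \eqref{rhoint_def} while the third is precisely $\Phi(g,A(g),\varphi(g))=0$. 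The main obstacle is not this bifurcation bookkeeping but the functional-analytic scaffolding it presupposes: proving that the fully nonlinear total-curvature operator for axisymmetric radial graphs and the Nemytskii operators built from $\pfint$ and $\hint^{-1}$ act smoothly between the Legendre-weighted Sobolev spaces, together with the Fredholm and spectral theory of $\mathcal{L}$ and the elliptic regularity on this scale --- the degeneration $(1-\zeta^2)\to 0$ at the poles being the source of the difficulty and the reason ordinary Sobolev spaces do not suffice.
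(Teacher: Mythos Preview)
Your argument is correct and, at the level of substance, coincides with the paper's: the same linearization $\sigma(\mathcal{L}-2)$ with one-dimensional kernel $\spn\{P_1\}$, the same identity $\pfint'(0)=\rho_{\m{ext}}$ making the $g$-derivative vanish at the base point, and the same nondegeneracy $\pfint''(0)=\rho_{\m{ext}}/\pint'(\rho_{\m{ext}})>0$ furnishing the transversality condition. The only packaging difference is that the paper invokes the Crandall--Rabinowitz theorem directly, viewing $\alpha$ as the bifurcation parameter and $(g,u)\in\R\times\h^4_\bot$ as the state (so the trivial branch is $\{(\alpha,0,0)\}$ and the one-dimensional kernel of $D_{(g,u)}\Xi(0,0,0)$ is $\spn\{(1,0)\}$), whereas you carry out the equivalent Lyapunov--Schmidt reduction by hand: solve the $\h^s_\perp$-component for $\varphi=\Psi(g,A)$ via the implicit function theorem, reduce to the scalar equation $B(g,A)=0$, factor $B=g\,\widetilde B$ using $B(0,A)\equiv 0$, and apply the implicit function theorem again to $\widetilde B$. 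Since Crandall--Rabinowitz is exactly this procedure abstracted, the two routes are interchangeable; yours is self-contained, the paper's shorter by citation. For the bootstrap the paper rearranges the equation as $\sigma(L+2I)\phi=-\Phi(\cdot)$ and inverts the isomorphism $\sigma(L+2I):\h^{k+2}\to\h^k$ at each level, which is slightly cleaner than re-running the implicit function theorem in every $\h^{s+2}$ and matching by uniqueness, but equivalent in effect.
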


To the best of our knowledge, Theorem \ref{main_thm} provides the first rigorous construction of non-spherical  bubbles of compressible fluid within an incompressible fluid in a uniform gravitational field.   From a mathematical perspective, the theorem is essentially a well-posedness result with the gravitational constant playing the role of data.  Indeed, the existence assertion in the theorem is clear, and the smooth dependence of the maps $A$ and $\varphi$ on $g$ can be viewed as smooth dependence of the solution on the data.  Theorem \ref{main_thm} does not state a uniqueness result, but its proof is directly based on Theorem \ref{bifurcation_thm}, which does contain a local uniqueness result of sorts.  This is difficult to state precisely at this point in the paper, but roughly speaking it says that there exists an open neighborhood of $0$ in an appropriate Hilbert space such that the only solutions in the open set are either spherical bubbles with trivial gravity or else the solutions with gravity from the theorem.

Our construction shows that the bubbles with nontrivial gravity bifurcate from the curve of spherical solutions without gravity at $\alpha =0$, at which point the constant densities and pressures are related via $\rho_{\m{int}} = \rho_{\m{ext}}$ and $\pext^\ast < \pint(\rho_{\m{int}})$ in light of \eqref{enthalpy_normalization} and \eqref{necessary_condition}.  More precisely, the proof of Theorem \ref{main_thm} is based on the bifurcation theorem of Crandall and Rabinowitz \cite{CR_1971}, applied in a special scale of weighted Sobolev spaces, and with the gravitational constant $g$ serving as the bifurcation parameter.  A key contribution of this work is the identification of a scale of function spaces and the derivation of their nonlinear functional analytic properties needed to apply Crandall-Rabinowitz.  The bubbles we construct enjoy axisymmetric symmetry, and the  Fr\'{e}chet derivative of the curvature operator of the unknown surface at the given sphere leads to the key elliptic operator $L-2I $, where $L$ is the Legendre differential operator defined by 
\begin{equation}
L u(\zeta) = - D ((1-\zeta^2) D u(\zeta)) \text{ for }  \zeta \in [-1,1], 
\end{equation}
which is degenerate at the boundary points (corresponding to the north and south poles).  This degeneracy suggests using some form of weighted Sobolev spaces; for instance, weighted spaces provided the appropriate functional framework to study the so-called physical vacuum in compressible gases \cite{JM_2015}.

The Legendre differential operator $L$ turns out to be best understood in weighted Sobolev spaces $\h^k$ (see \eqref{def:Hk} for the precise definition) characterized in terms of the Legendre polynomials, which are eigenfunctions of $L$.  In particular, for all $k\in \mathbb N$ we have that $ L:\h^{k+2}\to \h^k$  is a bounded linear operator (see Theorem \ref{hk_basics}).  Using a Hardy-type inequality (see Proposition \ref{weighted_basic}) and embedding results, we prove that $\h^k$ is in fact an algebra for all $k\ge 2$ (see Theorem \ref{hs_algebra}). We also show that composition with smooth functions induces smooth maps on $\h^k$ when $k\ge 2$ (see Theorem \ref{hk_comp_Cm}).  We believe that these spaces and their properties will be useful in the study of other nonlinear problems involving the Legendre operator.

Bifurcation theory and the closely related implicit function theorem have proven to be useful tools to construct interesting steady solutions around known simple solutions.  Among others, one well-known problem stems from compressible gaseous stars governed by the gravitational Euler-Poisson system, for which the hydrostatic equilibria are given by the so-called Lane-Emden equation. They are spherically symmetric solutions with trivial velocity, and their densities  decrease to zero from the center to the boundary. An important and physically relevant problem is then to construct rotating star solutions with a given angular velocity.  This is a free boundary problem like our bubble problem, and in fact, slowly rotating axisymmetric star solutions were recently obtained by perturbing the Lane-Emden solutions through with the angular velocity as a bifurcation parameter: \cite{JM_2017, JM_2019, SW_2017}. This suggests an interesting question related to the present work but beyond the scope of this paper: do rotating bubble solutions exist, and what role does gravity play? 

Bubbles immersed in a liquid are frequently observed in nature and experiments, and they have been objects of serious study in various disciplines. In geometry, the soap bubble problem is closely related to the isoperimetric problem, which seeks a shape minimizing the surface area for given volume.  In applied sciences, various models have been proposed to study the dynamics and take into account physical effects. Among others, a widely used model to study the bubble dynamics is the so-called the Rayleigh-Plesset equation, a second order nonlinear ODE for the bubble radius.  Another approximate model proposed by  Prosperetti  \cite{Pro_1991} focuses on the thermal damping.  We refer to \cite{LaiW_2025, LaiW_2025_2} and the references therein for recent progress and related works in this direction.

%%%%%%%%%%%%%%%%%%%%%%%%%%%%%%%%%%%%%%%%%%%%%%%%%
%%%%%%%%%%%%%%%%%%%%%%%%%%%%%%%%%%%%%%%%%%%%%%%%%
%%%%%%%%%%%%%%%%%%%%%%%%%%%%%%%%%%%%%%%%%%%%%%%%%
\section{Weighted Sobolev spaces}
%%%%%%%%%%%%%%%%%%%%%%%%%%%%%%%%%%%%%%%%%%%%%%%%% 
%%%%%%%%%%%%%%%%%%%%%%%%%%%%%%%%%%%%%%%%%%%%%%%%%
%%%%%%%%%%%%%%%%%%%%%%%%%%%%%%%%%%%%%%%%%%%%%%%%% 

In this section we introduce various weighted Sobolev spaces on the interval $(-1,1)$ and establish basic functional analytic properties used to prove our main result.  The first subsection concerns uniformly weighted spaces and embeddings via Hardy-type inequalities. In the second subsection, we study a scale of weighted spaces associated with the Legendre differential operator.

%%%%%%%%%%%%%%%%%%%%%%%%%%%%%%%%%%%%%%%%%%%%%%%%%
%%%%%%%%%%%%%%%%%%%%%%%%%%%%%%%%%%%%%%%%%%%%%%%%%
\subsection{Uniformly weighted spaces}
%%%%%%%%%%%%%%%%%%%%%%%%%%%%%%%%%%%%%%%%%%%%%%%%% 
%%%%%%%%%%%%%%%%%%%%%%%%%%%%%%%%%%%%%%%%%%%%%%%%%

We begin with a definition.

\begin{dfn}
Let $k \in \N$ and $\delta \in [-1,\infty)$.  We define the (real) Hilbert space $H^k_{\delta}((-1,1))$ to be the closure of $C^\infty([-1,1])$ with respect to the norm $\norm{\cdot}_{H^k_\delta}$ coming from the inner-product $\ip{\cdot,\cdot}_{H^k_\delta}$, defined as follows.  If $\delta >-1$  we define 
\begin{equation}
 \ip{u,v}_{H^k_\delta} = \sum_{j=0}^k \int_{-1}^1 (1-\zeta^2)^\delta D^j u(\zeta) D^j v(\zeta)  d\zeta,
\end{equation}
while if $\delta=-1$ we define  
\begin{equation}
 \ip{u,v}_{H^k_\delta} = \sum_{j=0}^k \int_{-1}^1  \frac{1}{(1-\zeta^2) (\log(2/(1-\zeta^2)))^2}  D^j u(\zeta) D^j v(\zeta) d\zeta.
\end{equation}
Note that $H^k_0((-1,1)) = H^k((-1,1))$ is the usual $L^2-$based Sobolev space of order $k$, and that in general we have the inclusion $H^k_\delta((-1,1)) \subseteq H^k_{\m{loc}}((-1,1))$. 
\end{dfn}

The spaces $H^k_\delta((-1,1))$ employ a common weight (determined by $\delta$) for all of the derivatives up to order $k$, but it turns out that if $u \in H^k_\delta((-1,1))$ then $D^j u$ gets an improved weight for each $0 \le j \le k-1$.  To prove this we first need the following technical lemma, which is a sort of Hardy-type inequality that is a variant of a result proved in \cite{KK_1967}.

\begin{prop}\label{weighted_basic}
Suppose that $f:[0,1] \to \R$ is a continuous function that is differentiable in $(0,1)$.  Then 
\begin{equation}\label{weighted_basic_00}
 \int_0^1 \frac{\abs{f(x)}^2}{x (\log(2/x))^2} dx \le \frac{16}{\log 2} \int_0^1 x (\abs{f(x)}^2 + \abs{f'(x)}^2) dx,
\end{equation}
and if $\alpha \in \R_+$ then
\begin{equation}\label{weighted_basic_01}
 \int_0^1 x^{\alpha-1} \abs{f(x)}^2 dx \le \max\{\alpha^{-1} 2^{\alpha +4}, \alpha^{-2}(4+\alpha 2^{\alpha+2}   \}  \int_0^1 x^{\alpha+1} (\abs{f(x)}^2 + \abs{f'(x)}^2) dx.
\end{equation}

\end{prop}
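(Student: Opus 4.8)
The plan is to prove both inequalities by the standard Hardy-inequality technique: write $\abs{f(x)}^2$ using the fundamental theorem of calculus against a suitable weight, integrate by parts to trade a derivative on $f$ for a derivative on the weight, and then control the resulting cross term by Cauchy–Schwarz and absorb. The only subtlety is the choice of the auxiliary weight function in each case so that the boundary terms vanish (or can be dropped) and the constants come out as stated. For \eqref{weighted_basic_01} the natural auxiliary primitive is a power of $x$: writing $\frac{d}{dx}\left(\frac{x^\alpha}{\alpha}\right) = x^{\alpha-1}$, one gets
\begin{equation}
\int_0^1 x^{\alpha-1}\abs{f(x)}^2\,dx = \left.\frac{x^\alpha}{\alpha}\abs{f(x)}^2\right|_0^1 - \frac{2}{\alpha}\int_0^1 x^\alpha f(x) f'(x)\,dx.
\end{equation}
The boundary term at $x=0$ vanishes because $x^{\alpha-1}\abs{f}^2$ is assumed integrable (so $\liminf_{x\to 0} x^\alpha\abs{f(x)}^2 = 0$ along a sequence, and a short argument upgrades this; alternatively one first proves the estimate for $f$ vanishing near $0$ and uses a cutoff/density argument), while the boundary term at $x=1$ is bounded by $\frac{1}{\alpha}\abs{f(1)}^2 \le \frac{2^{\alpha+1}}{\alpha}\int_0^1 x^{\alpha+1}(\abs{f}^2+\abs{f'}^2)$ after writing $\abs{f(1)}^2$ itself via another one-dimensional estimate (e.g.\ $\abs{f(1)}^2 \le 2\abs{f(x)}^2 + 2\abs{\int_x^1 f'}^2$, then integrate in $x$ over $[1/2,1]$ against $x^{\alpha+1}$). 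For the remaining integral term, Cauchy–Schwarz gives $\frac{2}{\alpha}\int x^\alpha \abs{f}\abs{f'} \le \frac{1}{2}\int x^{\alpha-1}\abs{f}^2 + \frac{2}{\alpha^2}\int x^{\alpha+1}\abs{f'}^2$, and the first piece is absorbed into the left-hand side. Collecting constants and being somewhat generous yields the stated maximum.

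For \eqref{weighted_basic_00} the argument is the same with the weight $\frac{1}{x(\log(2/x))^2}$, whose primitive is $\frac{1}{\log(2/x)}$: indeed $\frac{d}{dx}\left(\frac{1}{\log(2/x)}\right) = \frac{1}{x(\log(2/x))^2}$. Integrating by parts,
\begin{equation}
\int_0^1 \frac{\abs{f(x)}^2}{x(\log(2/x))^2}\,dx = \left.\frac{\abs{f(x)}^2}{\log(2/x)}\right|_0^1 - \frac{2}{1}\int_0^1 \frac{f(x)f'(x)}{\log(2/x)}\,dx.
\end{equation}
At $x=1$ the factor $\log(2/x) = \log 2$ contributes $\frac{\abs{f(1)}^2}{\log 2}$, again handled by a one-dimensional estimate against $x$ on $[1/2,1]$; at $x=0$ the boundary term vanishes since $\frac{1}{\log(2/x)} \to 0$ and the integral on the left is finite. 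For the cross term we use $\frac{1}{\log(2/x)} \le \frac{1}{\log 2}$ is the wrong direction, so instead we keep the $\log$ and estimate $2\int \frac{\abs{f}\abs{f'}}{\log(2/x)} = 2\int \frac{\abs{f}}{\sqrt{x}\log(2/x)}\cdot \sqrt{x}\abs{f'} \le \frac{1}{2}\int \frac{\abs{f}^2}{x(\log(2/x))^2} + 2\int x\abs{f'}^2$, absorbing the first term; the bound $\frac{1}{\log 2} \le \frac{16}{\log 2}$ and $2 \le \frac{16}{\log 2}$ (since $\log 2 < 1$) then give the claimed constant.

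The main obstacle, and the one point requiring genuine care rather than bookkeeping, is the justification of the vanishing of the boundary term at $x=0$ under only the stated hypotheses ($f$ continuous on $[0,1]$, differentiable on $(0,1)$, with the left-hand side finite) — in particular without a priori knowing $f'$ is integrable against the heavier weight, one must argue that the left-hand side being finite forces $x^\alpha\abs{f(x)}^2 \to 0$ (respectively $\abs{f(x)}^2/\log(2/x) \to 0$). The clean way to handle this is to first establish the inequality for $f$ supported away from $0$, where all manipulations are rigorous, and then pass to general $f$ by applying the result to $f\chi_\epsilon$ for a smooth cutoff $\chi_\epsilon$ equal to $1$ on $[\epsilon,1]$ and $0$ near $0$, controlling the error terms involving $\chi_\epsilon'$ (which is supported on $[\epsilon,2\epsilon]$ and of size $O(1/\epsilon)$) using that $\int_0^{2\epsilon} x^{\alpha-1}\abs{f}^2 \to 0$ by dominated convergence, and letting $\epsilon \to 0$. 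I expect this limiting argument to be the bulk of the technical work; everything else is the routine integration-by-parts-and-absorb computation sketched above.
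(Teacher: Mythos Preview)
Your strategy---integrate by parts against the primitive $1/\log(2/x)$ (resp.\ $x^\alpha/\alpha$), apply Cauchy--Schwarz to the cross term, absorb, and control $|f(1)|^2$ by a one-dimensional estimate on $[1/2,1]$---is exactly the paper's approach, and the computation goes through as you sketch.

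The one place you diverge is the treatment of the endpoint $x=0$, which you flag as ``the main obstacle'' and propose to handle by a cutoff $f\chi_\epsilon$.  That would work (continuity of $f$ on $[0,1]$ makes $f$ bounded, so the commutator term $\int x^{\alpha+1}|f\chi_\epsilon'|^2 = O(\epsilon^\alpha)$ vanishes), but it is more laborious than necessary.  The paper's device is cleaner: rather than integrating on $[0,1]$ and worrying about the boundary term at $0$, integrate on $[\epsilon,1]$.  The boundary term at $x=\epsilon$ is $-\,|f(\epsilon)|^2/\log(2/\epsilon)$ (resp.\ $-\,\epsilon^\alpha|f(\epsilon)|^2/\alpha$), which is \emph{nonpositive} and can simply be discarded from the inequality.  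One then sends $\epsilon\to 0$ and invokes monotone convergence on the left-hand side (the integrand is nonnegative).  No cutoff, no limiting argument for the boundary term, no circularity concerns---the sign does all the work.
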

\begin{proof}
We begin with the proof of \eqref{weighted_basic_00}. The result is trivial if the integral on the right side of \eqref{weighted_basic_00} is infinite, so we may assume that it is finite, which in turn means that $f,f' \in L^2((\ep,1))$ for all $0 < \ep < 1$.  Fixing such an $\ep$, we use the fundamental theorem of calculus to compute 
\begin{equation}
 \int_\ep^1 \frac{\abs{f(x)}^2}{x (\log(2/x))^2} dx = \int_\ep^1 \abs{f(x)}^2 \frac{d}{dx}(\log(2/x))^{-1} dx = - \int_\ep^1 2 \frac{f(x) f'(x)}{\log(2/x)}dx + \frac{\abs{f(1)}^2}{\log(2)} - \frac{\abs{f(\ep)}^2}{\log(2/\ep)}.
\end{equation}
We then use Cauchy-Schwarz to bound 
\begin{multline}
 \abs{ \int_\ep^1 2 \frac{f(x) f'(x)}{\log(2/x)}dx } \le 2 \left(  \int_\ep^1  \frac{\abs{f(x)}^2}{x (\log(2/x))^2} dx  \right)^{1/2}  \left(   \int_\ep^1  x\abs{f'(x)}^2  dx  \right)^{1/2} \\
 \le \frac{1}{2}  \int_\ep^1  \frac{\abs{f(x)}^2}{x (\log(2/x))^2} dx  + 2 \int_\ep^1  x\abs{f'(x)}^2  dx.
\end{multline}
Combining these two bounds and absorbing, we learn that 
\begin{equation}
  \int_\ep^1 \frac{\abs{f(x)}^2}{x (\log(2/x))^2} dx  \le 4 \int_\ep^1  x\abs{f'(x)}^2  dx + 2\frac{\abs{f(1)}^2}{\log(2)}  
\end{equation}
for all $0 < \ep < 1$.  Sending $\ep \to 0$ and employing the monotone convergence theorem then shows that 
\begin{equation}
  \int_0^1 \frac{\abs{f(x)}^2}{x (\log(2/x))^2} dx  \le 4 \int_0^1  x\abs{f'(x)}^2  dx + 2\frac{\abs{f(1)}^2}{\log(2)}.  
\end{equation}
On the other hand, standard Sobolev theory shows that 
\begin{equation}
 \abs{f(1)}^2 \le 4\int_{1/2}^1 \abs{f(x)}^2 dx + \int_{1/2}^1 \abs{f'(x)}^2 dx \le 2 \int_0^1 x (4\abs{f(x)}^2 + \abs{f'(x)}^2) dx.
\end{equation}
The estimate \eqref{weighted_basic_00} then follows by combining the previous two bounds and noting that $4+ 4/\log(2) \le 16/\log 2$.

Next, we prove \eqref{weighted_basic_01}, again assuming that its right side is finite.  A minor variant of the argument above shows that for $0 < \ep < 1$ we have the bound 
\begin{equation}
 \alpha \int_\ep^1 x^{\alpha-1} \abs{f(x)}^2 dx \le \frac{\alpha}{2} \int_\ep^1 x^{\alpha-1} \abs{f(x)}^2 dx + \frac{2}{\alpha} \int_\ep^1 x^{\alpha+1} \abs{f'(x)}^2 dx + \abs{f(1)}^2.
\end{equation}
Sending $\ep \to 0$, employing the monotone convergence theorem, and using an absorbing argument then shows that 
\begin{equation}
  \int_0^1 x^{\alpha-1} \abs{f(x)}^2 dx \le   \frac{4}{\alpha^2} \int_0^1 x^{\alpha+1} \abs{f'(x)}^2 dx + \frac{2}{\alpha} \abs{f(1)}^2.
\end{equation}
On the other hand, 
\begin{equation}
 \abs{f(1)}^2 \le 4\int_{1/2}^1 \abs{f(x)}^2 dx + \int_{1/2}^1 \abs{f'(x)}^2 dx \le 2^{\alpha+1} \int_0^1 x^{\alpha+1} (4\abs{f(x)}^2 + \abs{f'(x)}^2) dx.
\end{equation}
\end{proof}

The following result records the essential properties of the $H^k_\delta$ spaces we will need in this paper, including the aforementioned improved weights.

\begin{thm}\label{weighted_space_properties}
The following hold.
\begin{enumerate}
 \item If $k_0,k_1 \in \N$ and $\delta_0,\delta_1 \in [-1,\infty)$ satisfy $k_0 \le k_1$ and $\delta_1 \le \delta_0$, then we have the continuous inclusion $H^{k_1}_{\delta_1}((-1,1)) \hookrightarrow H^{k_0}_{\delta_0}((-1,1))$. 
 \item If $1\le k \in \N$, then we have the continuous inclusion $H^k_k((-1,1)) \hookrightarrow H^{k-1}_{k-2}((-1,1))$.
 \item Suppose $k \in \N$.  Then we have the continuous inclusions $H^{2k}_{2k}((-1,1)) \hookrightarrow H^k((-1,1))$ as well as $H^{2k+1}_{2k+1}((-1,1)) \hookrightarrow H^k_{-1}((-1,1))$.  In particular, we have the continuous inclusion $H^k_k((-1,1)) \hookrightarrow H^{\lfloor k/2\rfloor}((-1,1))$.
 \item If $2\le k \in \N$, then we have the continuous inclusion $H^{k}_{k}((-1,1)) \hookrightarrow C^{\lfloor k/2\rfloor -1, 1/2}_b((-1,1))$.  In particular, each $u \in H^{k}_{k}((-1,1))$ uniquely extends to $u \in C^{\lfloor k/2\rfloor -1, 1/2}_b([-1,1])$.
\end{enumerate}
\end{thm}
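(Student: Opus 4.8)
The plan is to establish the four claims in sequence, as each one feeds into the next. I'll work throughout with $f \in C^\infty([-1,1])$ and then pass to limits using density, which is legitimate because every norm in sight is continuous with respect to the relevant $H^k_\delta$ norm.

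For part (1): the inclusion is essentially immediate from the pointwise comparison of weights. When $-1 < \delta_1 \le \delta_0$, on $(-1,1)$ we have $(1-\zeta^2)^{\delta_0} \le (1-\zeta^2)^{\delta_1} \cdot \sup_{(-1,1)}(1-\zeta^2)^{\delta_0 - \delta_1}$, and since $\delta_0 - \delta_1 \ge 0$ this sup is $1$; combined with $k_0 \le k_1$ (so fewer derivative terms appear) we get $\norm{u}_{H^{k_0}_{\delta_0}} \le \norm{u}_{H^{k_1}_{\delta_1}}$. The only wrinkle is the $\delta = -1$ case, where one checks that $\frac{1}{(1-\zeta^2)(\log(2/(1-\zeta^2)))^2} \le C (1-\zeta^2)^{\delta_0}$ for any fixed $\delta_0 > -1$, which holds because $(1-\zeta^2)^{\delta_0+1}(\log(2/(1-\zeta^2)))^2$ is bounded on $(-1,1)$ (the logarithmic factor is beaten by any positive power). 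Then the argument concludes as before.

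For part (2), which is the main engine: I want to show $\norm{D^j u}$ in a weight of order $k-2$ is controlled by $\norm{u}_{H^k_k}$ for $0 \le j \le k-1$. The top derivative $D^{k-1}u$ already carries weight $k$ in $\norm{u}_{H^k_k}$, which is stronger than weight $k-2$, so that term is fine via part (1)-type reasoning. For the lower derivatives, I apply Proposition \ref{weighted_basic} to $f = D^j u$ after splitting the interval $(-1,1)$ at $0$ and changing variables near each endpoint: near $\zeta = 1$, set $x = 1-\zeta$, so $1-\zeta^2 = x(2-\zeta) \sim x$, and the weight $(1-\zeta^2)^\delta$ becomes comparable to $x^\delta$. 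Then \eqref{weighted_basic_01} with $\alpha = \delta + 1$ (valid when $\delta > -1$) upgrades the exponent $\alpha - 1 = \delta$ from a term with weight-exponent $\delta$ to control by the weight-exponent $\delta+2$ applied to $f$ and $f'$. Iterating this from $j = k-2$ down to $j = 0$: each step trades two orders of weight for one order of derivative. Since $D^{k-1}u$ sits at weight $k$, we get $D^{k-2}u$ at weight $k-2$, but also $D^{k-2}u$ at weight $k-2$ controls $D^{k-3}u$ at weight $k-4$... and more carefully, the full norm $\norm{u}_{H^k_k}$ — which includes all of $D^0u, \dots, D^{k-1}u$ at weight $k$ — dominates $D^j u$ at weight $k-2$ for every $j \le k-1$ by applying \eqref{weighted_basic_01} once to the gap between weight-$k$ control of $D^ju, D^{j+1}u$ and desired weight-$(k-2)$ control of $D^j u$. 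The $\delta = -1$ boundary case of the target space for $k=1$ (giving $H^0_{-1}$) is exactly where \eqref{weighted_basic_00} is used instead of \eqref{weighted_basic_01}.

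Parts (3) and (4) then follow by iteration and embedding. For (3): apply part (2) repeatedly, $H^{2k}_{2k} \hookrightarrow H^{2k-1}_{2k-2} \hookrightarrow H^{2k-2}_{2k-4} \hookrightarrow \cdots \hookrightarrow H^k_0 = H^k$, and similarly $H^{2k+1}_{2k+1} \hookrightarrow H^{2k}_{2k-1} \hookrightarrow \cdots$; here one must track that the weight decreases by $2$ each time while the order decreases by $1$, so after $k$ steps from $H^{2k}_{2k}$ we land at order $k$, weight $0$, and after $k$ steps from $H^{2k+1}_{2k+1}$ we reach order $k+1$, weight $1$, then one more application of part (2) with a twist — actually the odd case bottoms out at $H^{k+1}_{k+1}$ wait, let me recount: from $H^{2k+1}_{2k+1}$, $k$ applications give $H^{k+1}_{1}$, and one checks directly (via part (1), since $1 > -1$) or via one more Hardy step that $H^{k+1}_1 \hookrightarrow H^k_{-1}$. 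The combined statement $H^k_k \hookrightarrow H^{\lfloor k/2 \rfloor}$ follows by applying whichever of the two to the integer part. Finally (4): once we know $H^k_k \hookrightarrow H^{\lfloor k/2 \rfloor}((-1,1))$ (an \emph{unweighted} Sobolev space on a bounded interval), the standard Sobolev embedding $H^{m}((-1,1)) \hookrightarrow C^{m-1,1/2}_b((-1,1))$ for $m \ge 1$ gives the result with $m = \lfloor k/2 \rfloor \ge 1$ since $k \ge 2$; the unique continuous extension to $[-1,1]$ is then automatic from uniform continuity. The main obstacle is bookkeeping in part (2): getting the constants in Proposition \ref{weighted_basic} to line up across the interval split and the change of variables, and handling the degenerate $\alpha$ values and the logarithmic ($\delta=-1$) endpoint uniformly; but no new idea beyond the Hardy inequality is needed.
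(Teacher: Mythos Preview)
Your approach is the same as the paper's: part (1) by pointwise weight comparison, part (2) by the Hardy inequalities of Proposition \ref{weighted_basic} applied on each half-interval after the change of variables $x = 1 \mp \zeta$, part (3) by iterating the Hardy step, and part (4) by the standard one-dimensional Sobolev embedding.  The structure is right, but there is a recurring slip in the direction of the weight monotonicity that you should fix.

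First, in part (2) you write that ``$D^{k-1}u$ already carries weight $k$ in $\norm{u}_{H^k_k}$, which is stronger than weight $k-2$.''  This is backwards: since $0 \le 1-\zeta^2 \le 1$, we have $(1-\zeta^2)^{k-2} \ge (1-\zeta^2)^k$, so control at weight $k$ is \emph{weaker} than control at weight $k-2$.  You therefore cannot get the $j = k-1$ term for free; the Hardy inequality \eqref{weighted_basic_01} must be applied to $j = k-1$ exactly as to the lower-order terms, using that both $D^{k-1}u$ and $D^k u$ sit in the weight-$k$ norm.  The paper does precisely this: it applies the Hardy step uniformly for all $0 \le j \le k-1$.

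Second, the same confusion reappears in part (3), odd case, when you suggest that $H^{k+1}_1 \hookrightarrow H^k_{-1}$ might follow ``via part (1), since $1 > -1$.''  Part (1) requires $\delta_1 \le \delta_0$, i.e.\ $1 \le -1$, which fails.  The correct route is the one you also mention: one more Hardy step, now using the logarithmic estimate \eqref{weighted_basic_00} since the target weight is $-1$.  More generally, note that the chains you write down in part (3), such as $H^{2k-1}_{2k-2} \hookrightarrow H^{2k-2}_{2k-4}$, are not literally instances of item (2) (the indices no longer match); what you are really iterating is the underlying Hardy argument $H^m_\delta \hookrightarrow H^{m-1}_{\delta-2}$, which the proof of item (2) establishes for any $m \ge 1$ and $\delta \ge 1$.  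The paper's phrasing ``applying the second item'' is to be read in the same spirit.
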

\begin{proof}
The first item follows readily from the definition of the norms.  For the second item, we first consider the case $k\ge 2$. By applying \eqref{weighted_basic_01} on half-intervals, we then obtain for each $0 \le j \le k-1$, 
\begin{multline}
 \int_{-1}^1 (1-\zeta^2)^{k-2} \abs{ D^j u (\zeta)}^2 d\zeta  \lesssim \int_{-1}^0 (1+\zeta)^{k-2} \abs{ D^j u(\zeta) }^2 d\zeta + \int_{0}^1 (1-\zeta)^{k-2} \abs{ D^j u(\zeta) }^2 d\zeta \\
 \lesssim \int_{-1}^0 (1+\zeta)^{k} ( \abs{ D^j u (\zeta)}^2 +  \abs{D^{j+1} u (\zeta)}^2  )d\zeta + \int_{0}^1 (1-\zeta)^{k-2} (\abs{ D^j u (\zeta)}^2 + \abs{D^{j+1} u(\zeta) }^2 )d\zeta  \\
 \lesssim \int_{-1}^1 (1-\zeta^2)^{k} (\abs{ D^j u(\zeta) }^2 + \abs{D^{j+1} u(\zeta)}^2 )d\zeta,
\end{multline}
which implies the claimed result. If $k=1$, the result follows analogously by applying \eqref{weighted_basic_00} instead. 
The third item follows by applying the second $k$ times in the even case and $k+1$ times in the odd case.  The fourth item follows from the first and third,  combined with the usual one-dimensional Sobolev embedding.
\end{proof}

%%%%%%%%%%%%%%%%%%%%%%%%%%%%%%%%%%%%%%%%%%%%%%%%%
%%%%%%%%%%%%%%%%%%%%%%%%%%%%%%%%%%%%%%%%%%%%%%%%%
\subsection{Weighted spaces generated by the Legendre operator}
%%%%%%%%%%%%%%%%%%%%%%%%%%%%%%%%%%%%%%%%%%%%%%%%% 
%%%%%%%%%%%%%%%%%%%%%%%%%%%%%%%%%%%%%%%%%%%%%%%%%

We now aim to study a scale of weighted Sobolev spaces in terms of the Legendre differential operator.  We begin with a definition.

\begin{dfn}
We define the following.
\begin{enumerate}
 \item The space $\acl((-1,1))  = \{u : (-1,1) \to \R \st f \text{ is absolutely continuous on each compact }K \subset (-1,1)\}$.  Note that if $u \in \acl((-1,1))$ then $Du$ is defined almost everywhere in $(-1,1)$ and is Lebesgue measurable.   We write  $\acl^0((-1,1)) =\acl((-1,1))$.  Then for $1 \le k \in \N$ we define $\acl^k((-1,1)) = \{u \in \acl^{k-1}((-1,1)) \st D^k u \in \acl((-1,1)) \}$.  Then $u \in \acl^k((-1,1))$ implies that $D^j u$ is defined almost everywhere in $(-1,1)$ and Lebesgue measurable for $0 \le j \le k+1$.

 \item Given $1\le k \in \N$, we define the inner-product space $\h^k$ via
\begin{equation}\label{def:Hk}
 \h^k = \{u \in \acl^{k-1}((-1,1)) \st \norm{u}_{\h^k} < \infty \},
\end{equation}
where
\begin{equation}\label{def:Hk_norm}
 \norm{u}_{\h^k}^2 = \sum_{j=0}^k \int_{-1}^1 (1-\zeta^2)^j \abs{D^j u(\zeta)}^2 d\zeta.
\end{equation}
The inner-product on $\h^k$ is the obvious one associated to this square-norm.  When $k=0$ we set  $\h^0 = L^2((-1,1))$ with its standard inner-product.

 \item For $n \in \N$ we let $p_n : [-1,1] \to \R$ be the $n^{th}$ Legendre polynomial, normalized so that $\ip{p_n,p_m}_{L^2} = \delta_{n,m}$.  Given $u \in L^2((-1,1))$ we define $\hat{u} : \N \to \R$ via $\hat{u}(n) = \ip{u,p_m}_{L^2}$.  We write $\mathcal{P} = \{ p : (-1,1) \to \R \st p \text{ is a polynomial}\}$.  It is well-known (see, for instance, \cite{Sz_1978})  that the Legendre polynomials span $\mathcal{P}$ and form a complete orthonormal set in $L^2((-1,1))$. 
\end{enumerate}
\end{dfn}

The following result records some essential properties of the $\h^k$ spaces proved in \cite{ELW_2002, LW_2002, LW_2013}; we also refer to \cite{FGHL_2025}.

\begin{thm}\label{hk_basics}
The following hold for $k \in \N$.
\begin{enumerate}
 \item The space $\h^k$ is complete, and hence a Hilbert space.  Additionally, if $k \le m \in \N$ then we have the continuous inclusion $\h^m \hookrightarrow \h^k$.
 \item The set $\mathcal{P} \subset \h^k$ is dense.   
 \item Let $\lambda \in \R_+$.  Then 
\begin{equation}\label{hk_basics_00}
 \h^k = \{ u \in L^2((-1,1)) \st \sum_{n=0}^\infty (n(n+1) + \lambda)^k \abs{\hat{u}(n)}^2  < \infty\},
\end{equation}
and we have the norm equivalence
\begin{equation}\label{hk_basics_01}
 \norm{u}_{\h^k} \asymp \left(\sum_{n=0}^\infty (n(n+1) + \lambda )^k \abs{\hat{u}(n)}^2  \right)^{1/2} .
\end{equation}
Moreover, $\{p_n / [n(n+1) +\lambda]^{k/2} \}_{n \in \N}$ is a complete orthonormal set in $\h^k$ when it is endowed with the inner-product associated to the square-norm on the right of \eqref{hk_basics_01}.

 \item The Legendre differential operator $L: \mathcal{P} \to \mathcal{P}$ defined by 
\begin{equation}
 Lu(\zeta) = -(1-\zeta^2) D^2 u(\zeta) + 2\zeta Du(\zeta)  = -D((1-\zeta^2) Du(\zeta))
\end{equation}
uniquely extends to a bounded linear map $L : \h^{k+2} \to \h^k$.  Moreover, given any $r \in \R \backslash \{ n (n+1) \st n \in \N\}$ the map  $L - r I : \h^{k+2} \to \h^k$  is an isomorphism.  

\end{enumerate}

\end{thm}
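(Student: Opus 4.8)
The plan is to reduce all four assertions to a single spectral characterization of the $\h^k$ norm in terms of Legendre coefficients, obtained from an exact computation on polynomials. The engine is the orthogonality relation
\begin{equation}\label{hk_orth}
 \int_{-1}^1 (1-\zeta^2)^j D^j p_n(\zeta)\, D^j p_m(\zeta)\, d\zeta = c_{n,j}\,\delta_{n,m}, \quad\text{with } c_{n,j} = \tfrac{(n+j)!}{(n-j)!} \text{ for } n\ge j \text{ and } c_{n,j}=0 \text{ for } n<j,
\end{equation}
valid for all $j,n,m\in\N$. To prove \eqref{hk_orth}, integrate by parts $j$ times on $(-1,1)$: since $(1-\zeta^2)^j$ vanishes to order $j$ at $\pm1$, every boundary term drops and the left side becomes $(-1)^j\int_{-1}^1 (T_j p_n)\,p_m$, where $T_j u := (-1)^j D^j[(1-\zeta^2)^j D^j u]$. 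The operator $T_j:\mathcal{P}\to\mathcal{P}$ maps each polynomial to one of the same degree and is symmetric for the $L^2$ inner product (the same integration by parts, performed in the other direction, has no boundary terms either), hence is diagonal in the $L^2$-orthonormal Legendre basis; reading off leading coefficients gives the eigenvalue $c_{n,j}$. Summing \eqref{hk_orth} over $0\le j\le k$ gives, for every polynomial $u=\sum_n\hat u(n)p_n$, the exact identity $\norm{u}_{\h^k}^2=\sum_n\big(\sum_{j=0}^k c_{n,j}\big)|\hat u(n)|^2$. Since for $n\ge k$ the largest summand $c_{n,k}$ satisfies $c_{n,k}\asymp n^{2k}$, while for the finitely many $n<k$ the quantity $\sum_{j\le k}c_{n,j}$ lies in a fixed compact subset of $(0,\infty)$, one gets $\sum_{j=0}^k c_{n,j}\asymp (n(n+1)+\lambda)^k$ uniformly in $n$ — which is exactly \eqref{hk_basics_01} for $u\in\mathcal{P}$.

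Next I would prove items (1) and (2). Completeness is routine: a Cauchy sequence $(u_i)$ in $\h^k$ has each $D^j u_i$ ($0\le j\le k$) Cauchy in $L^2(K)$ for every compact $K\subset(-1,1)$, so $u_i\to v$ in $H^k_{\m{loc}}((-1,1))$ with $v\in\acl^{k-1}((-1,1))$; Fatou and the Cauchy property then put $v\in\h^k$ with $u_i\to v$ there, and $\h^m\hookrightarrow\h^k$ for $m\ge k$ is immediate from \eqref{def:Hk_norm}. For density of $\mathcal{P}$: the weights are bounded, so $C^k([-1,1])\hookrightarrow\h^k$ continuously, and polynomials are dense in $C^k([-1,1])$ (Bernstein polynomials), so it suffices to approximate an arbitrary $u\in\h^k$ by elements of $C^\infty([-1,1])$. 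For $\rho\in(0,1)$ put $u_\rho(\zeta)=u(\rho\zeta)$; from $1-\zeta^2\le 1-\rho^2\zeta^2$ on $(-1,1)$ and a change of variables, $\norm{u_\rho}_{\h^k}\le\rho^{-1/2}\norm{u}_{\h^k}$, and the same computation together with continuity of dilations on the weighted spaces $L^2((1-\zeta^2)^j\,d\zeta)$ gives $u_\rho\to u$ in $\h^k$ as $\rho\to1^-$. Each $u_\rho$ is locally $H^k$ on a neighborhood of $[-1,1]$, where the weights are non-degenerate, so mollification yields $C^\infty([-1,1])$ approximants, and a diagonal argument finishes item (2).

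Items (3) and (4) then follow by density and the diagonal structure. Writing $\ell^2_{k,\lambda}$ for the sequence space with norm $(\sum_n(n(n+1)+\lambda)^k|a_n|^2)^{1/2}$: any polynomial sequence converging to $u\in\h^k$ is Cauchy in $\h^k$, so by the polynomial identity its coefficient sequences are Cauchy in $\ell^2_{k,\lambda}$ and converge there, forcing $\hat u\in\ell^2_{k,\lambda}$ with $\norm{u}_{\h^k}\asymp\norm{\hat u}_{\ell^2_{k,\lambda}}$; conversely, if $u\in L^2((-1,1))$ with $\hat u\in\ell^2_{k,\lambda}$, its Legendre partial sums are Cauchy in $\h^k$ and converge to a limit that must equal $u$. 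The complete-orthonormal-set statement is then immediate from $\widehat{p_n}(m)=\delta_{n,m}$ and density of $\mathcal{P}$. For item (4), $Lp_n=n(n+1)p_n$ gives $\norm{Lu}_{\h^k}\asymp\norm{(n(n+1)\hat u(n))_n}_{\ell^2_{k,\lambda}}\lesssim\norm{u}_{\h^{k+2}}$ on $\mathcal{P}$, so $L$ extends to a bounded map $\h^{k+2}\to\h^k$ acting by $\widehat{Lu}(n)=n(n+1)\hat u(n)$; transported to coefficients, $L-rI$ is multiplication by $n(n+1)-r$, and since $r\notin\{n(n+1):n\in\N\}$ the sequence $n(n+1)-r$ is bounded away from $0$ with $|n(n+1)-r|\asymp n(n+1)+\lambda$, whence multiplication by it is an isomorphism $\ell^2_{k+2,\lambda}\to\ell^2_{k,\lambda}$, i.e.\ $L-rI:\h^{k+2}\to\h^k$ is an isomorphism.

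The step I expect to be the main obstacle is the density of $\mathcal{P}$ in $\h^k$ in item (2): the degeneracy of the weights at $\pm1$ rules out naive mollification, so the singularity must first be pushed off the interval via the dilation $u\mapsto u_\rho$, and the convergence $u_\rho\to u$ in $\h^k$ genuinely hinges on continuity of dilations on the weighted $L^2$ spaces (itself reduced to compactly supported data). Everything else amounts to bookkeeping with weighted $\ell^2$ sequence spaces once \eqref{hk_orth} and the resulting polynomial norm identity are established.
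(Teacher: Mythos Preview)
Your proposal is correct and self-contained, whereas the paper's own proof of items (1)--(3) consists entirely of citations to the left-definite theory literature \cite{ELW_2002, LW_2002, LW_2013}; only item (4) is argued in the paper, and there the argument is exactly yours (diagonalize $L$ via $L p_n = n(n+1) p_n$ and transport to the sequence space). The spectral approach you take---computing the exact orthogonality relation for $(1-\zeta^2)^{j/2} D^j p_n$ (these are, up to normalization, the associated Legendre functions), deducing the diagonal form of the $\h^k$ norm on polynomials, and then extending by density---is precisely the content of those references, so your argument is not a genuinely different route but rather a condensed reconstruction of the cited proofs. The dilation-then-mollify density argument you sketch for item (2) is sound and is a standard way to handle the endpoint degeneracy; the uniform bound $\norm{u_\rho}_{\h^k} \le \rho^{-1/2}\norm{u}_{\h^k}$ together with density of compactly supported functions in each weighted space $L^2((1-\zeta^2)^j\, d\zeta)$ does yield $u_\rho \to u$ in $\h^k$ exactly as you indicate.
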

\begin{proof}
The first three items are proved in \cite{ELW_2002,LW_2002,LW_2013}.  The fourth follows easily from the third and the fact that the Legendre polynomials satisfy $L p_n = n(n+1) p_n$.
\end{proof}

Next, we aim to establish some useful functional analytic properties of the $\h^k$ spaces.  We begin with a trio of embedding results.  The first relates to the uniformly weighted spaces discussed previously.

\begin{thm}\label{hk_weight_improvement}
Let $k \in \N$.  Then the following hold.
\begin{enumerate}
 \item We have the continuous inclusions 
\begin{equation}
\h^k \hookrightarrow H^k_k((-1,1)) \hookrightarrow  H^{\lfloor k/2\rfloor}((-1,1)).
\end{equation}
In particular, if $k \ge 2$ then $\h^k \hookrightarrow C^{\lfloor k/2\rfloor -1, 1/2}_b([-1,1])$.
 
 \item If $1 \le k$ and $u \in \h^k$, then for $0 \le j \le k-1$ we have that $D^j u \in H^0_{\max\{-1,2j-k\}}((-1,1))$ and 
\begin{equation}
 \sum_{j=0}^{k-1} \norm{D^j u}_{H^0_{\max\{-1,2j-k\}}} \ls \norm{u}_{\h^k}.
\end{equation}
\end{enumerate}
\end{thm}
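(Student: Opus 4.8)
**The plan is to prove Theorem \ref{hk_weight_improvement} by reducing everything to the spectral characterization \eqref{hk_basics_00}--\eqref{hk_basics_01} of the $\h^k$ spaces together with the weighted Hardy inequalities of Proposition \ref{weighted_basic}.** The key starting observation is that by item (2) of Theorem \ref{hk_basics}, polynomials are dense in $\h^k$, so it suffices to prove all the claimed inequalities for $u \in \mathcal{P}$ and then pass to the limit; on polynomials there are no convergence or boundary-term subtleties.

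\medskip

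\textbf{Step 1: the inclusion $\h^k \hookrightarrow H^k_k((-1,1))$.} Comparing the defining norm \eqref{def:Hk_norm} of $\h^k$ with the $H^k_k$ norm, the $j$-th term of the former is $\int_{-1}^1 (1-\zeta^2)^j |D^j u|^2$, while the $j$-th term of the latter is $\int_{-1}^1 (1-\zeta^2)^k |D^j u|^2$. Since $k \ge j$ and $1-\zeta^2 \le 1$ on $[-1,1]$, we have $(1-\zeta^2)^k \le (1-\zeta^2)^j$ pointwise, so the $H^k_k$ norm is dominated by the $\h^k$ norm term by term — this inclusion is immediate and requires no real work. Combined with the second inclusion, which is exactly item (3) of Theorem \ref{weighted_space_properties} applied with $k$ replaced by $k$ (giving $H^k_k \hookrightarrow H^{\lfloor k/2 \rfloor}$), we get the chain. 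The final $C^{\lfloor k/2\rfloor-1,1/2}_b$ statement for $k \ge 2$ is then just item (4) of Theorem \ref{weighted_space_properties}.

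\medskip

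\textbf{Step 2: the improved-weight estimate, item (2).} This is the substantive part. Fix $u \in \mathcal{P}$ and $0 \le j \le k-1$. The target weight exponent for $D^j u$ is $\delta_j := \max\{-1, 2j-k\}$, i.e. we want $\int (1-\zeta^2)^{\delta_j}|D^j u|^2 \lesssim \|u\|_{\h^k}^2$ (with the logarithmic weight when $\delta_j = -1$). The strategy is to run the Hardy inequality of Proposition \ref{weighted_basic} iteratively, each application trading one power of the weight for one more derivative. Working on the two half-intervals $[-1,0]$ and $[0,1]$ separately (as in the proof of Theorem \ref{weighted_space_properties}(2)), and using the substitution that turns $1-\zeta^2$ near $\zeta = 1$ into $x = 1-\zeta$ (resp. $x = 1+\zeta$ near $-1$), one applies \eqref{weighted_basic_01} with $\alpha$ chosen so that $\alpha - 1$ equals the current weight exponent. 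Starting from the weight exponent $j$ on $|D^j u|$ (available from the $\h^k$ norm since the $j$-th term of \eqref{def:Hk_norm} controls $\int (1-\zeta^2)^j |D^j u|^2$), one step of \eqref{weighted_basic_01} bounds $\int (1-\zeta^2)^{j-2}|D^{j-1}u|^2$ — wait, more carefully: one should run the inequality \emph{downward in the weight on a fixed derivative order is not what happens}; rather, \eqref{weighted_basic_01} with the roles $f = D^{j}u$ shows $\int x^{\alpha-1}|D^j u|^2 \lesssim \int x^{\alpha+1}(|D^j u|^2 + |D^{j+1}u|^2)$. So the correct direction is: to lower the weight exponent on $D^j u$ from $j$ down toward $2j - k$, we instead \emph{raise} derivative order, pulling in $D^{j+1}u, D^{j+2}u, \ldots, D^k u$. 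Concretely, iterate $k - j$ times: $\int (1-\zeta^2)^{2j-k}|D^j u|^2 \lesssim \int (1-\zeta^2)^{2j-k+2}(|D^j u|^2 + |D^{j+1}u|^2) \lesssim \cdots \lesssim \sum_{i=0}^{k-j}\int (1-\zeta^2)^{j+\cdots}|D^{j+i}u|^2$, and at the last stage the weight exponent on $|D^{j+i}u|$ is exactly $j + i \le k$, which is controlled by the $\h^k$ norm (using $(1-\zeta^2)^{j+i} \le (1-\zeta^2)^{j+i}$ and the matching term, or crudely $(1-\zeta^2)^{j+i}$ against the appropriate term of \eqref{def:Hk_norm}). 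Each step requires the weight exponent to stay $> -1$ so that $\alpha > 0$ in \eqref{weighted_basic_01}; when $2j - k \le -1$ the last step instead uses the logarithmic inequality \eqref{weighted_basic_00}, which is exactly why the target is $\max\{-1, 2j-k\}$ and why the $H^0_{-1}$ space carries the $(\log(2/(1-\zeta^2)))^{-2}$ weight. Summing over $0 \le j \le k-1$ gives the stated estimate; density of $\mathcal{P}$ in $\h^k$ extends it to all of $\h^k$.

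\medskip

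\textbf{The main obstacle} I anticipate is bookkeeping in Step 2: tracking exactly how many iterations of \eqref{weighted_basic_01} are needed for each $j$, verifying that the intermediate weight exponents never cross $-1$ prematurely (so that \eqref{weighted_basic_01} stays applicable with positive $\alpha$), and handling the boundary case $2j - k \le -1$ by switching to \eqref{weighted_basic_00} at the right moment — all while keeping the constants uniform so the sum over $j$ closes. A clean way to organize this is an induction on $k - j$ (the "distance to the top derivative"), proving the single claim: for $u \in \mathcal P$ and $0 \le j \le k-1$, $\|D^j u\|_{H^0_{\max\{-1, 2j-k\}}} \lesssim \sum_{i=j}^{k}\big(\int_{-1}^1(1-\zeta^2)^i |D^i u|^2\,d\zeta\big)^{1/2} \le \|u\|_{\h^k}$, with the base case $j = k-1$ being a single application of \eqref{weighted_basic_01} (or \eqref{weighted_basic_00} if $k = 1$) and the inductive step being one more application of the Hardy inequality to peel off the lowest-order term. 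I would also double-check the edge cases $k = 0$ (item (2) is vacuous, item (1) trivial) and $k = 1$ (only $j = 0$, target weight $-1$, handled by \eqref{weighted_basic_00} directly).
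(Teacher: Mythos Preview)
Your proposal is correct and follows essentially the same approach as the paper's proof, which tersely cites Theorem~\ref{weighted_space_properties} (items (2)--(4), packaging the Hardy iteration you spell out) together with density of polynomials, and says item (2) follows by ``a simple induction argument'' --- precisely the induction on $k-j$ you outline. The spectral characterization you mention in your opening plan is not actually needed or used, as you correctly recognize in the body of the argument.
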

\begin{proof}
The first item follows from  Theorem \ref{weighted_space_properties}, the fact that $\mathcal{P} \subseteq C^\infty([-1,1])$, and the standard Sobolev embedding in one dimension.   The second item also  follows from  Theorem  \ref{weighted_space_properties} and a simple induction argument.
\end{proof}

The second gives some $H^1$ bounds (and hence $C^{0,1/2}_b$ bounds) for various derivatives weighted by powers of $(1-\zeta^2)$.

\begin{prop}\label{H1C0_bound}
Suppose that $u \in \h^k$ for $1 \le k \in \N$, and let $0 \le j \le k-1$.
\begin{enumerate}
 \item If $j \le (k-1)/2$ then for $\mu >1/2$ we have that  $(1-\zeta^2)^\mu D^j u \in H^1((-1,1))$ and $\norm{(1-\zeta^2)^\mu D^j u }_{H^1} \ls \norm{u}_{\h^k}$.
 \item If $(k-1)/2 < j$ then we have that  $(1-\zeta^2)^{j+1-k/2} D^j u \in H^1((-1,1))$ and $\norm{(1-\zeta^2)^{j+1-k/2} D^j u }_{H^1} \ls \norm{u}_{\h^k}$. 
\end{enumerate}
Moreover, since $H^1((-1,,1)) \hookrightarrow C^{0,1/2}_b([-1,1])$, each of the above $H^1$ estimates also yields a $C^{0,1/2}_b$ estimate.
\end{prop}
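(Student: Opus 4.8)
The plan is to write $v = (1-\zeta^2)^\beta D^j u$, with $\beta = \mu$ in the first case and $\beta = j+1-k/2$ in the second, compute its a.e.\ derivative by the product rule, and bound the $L^2((-1,1))$ norms of $v$ and of the two resulting summands of $Dv$ in terms of $\norm{u}_{\h^k}$, using the weighted estimates for the derivatives of $u$ already available. Since $j \le k-1$, we have $D^j u \in \acl((-1,1))$, and as $(1-\zeta^2)^\beta \in C^\infty((-1,1))$ it follows that $v \in \acl((-1,1))$ with $Dv = -2\beta\zeta(1-\zeta^2)^{\beta-1}D^j u + (1-\zeta^2)^\beta D^{j+1}u$ a.e.; once we show $v, Dv \in L^2((-1,1))$ we conclude $v \in H^1((-1,1))$ with $\norm{v}_{H^1} \ls \norm{u}_{\h^k}$, and the final assertion is then the Morrey embedding $H^1((-1,1)) \hookrightarrow C^{0,1/2}_b([-1,1])$.

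First I would collect the input estimates. For $0 \le i \le k$ set $w_i = \max\{-1, 2i-k\}$; then $\norm{D^i u}_{H^0_{w_i}} \ls \norm{u}_{\h^k}$, which for $0 \le i \le k-1$ is Theorem \ref{hk_weight_improvement}(2) and for $i = k$ (where $w_k = k$) is immediate from the definition \eqref{def:Hk_norm} of $\norm{\cdot}_{\h^k}$. I would also record two elementary pointwise comparisons on $(-1,1)$: (i) $(1-\zeta^2)^a \le (1-\zeta^2)^b$ whenever $a \ge b \ge 0$; and (ii) for every $a > -1$ there is $C_a$ with $(1-\zeta^2)^a \le C_a\,[(1-\zeta^2)(\log(2/(1-\zeta^2)))^2]^{-1}$, valid because $t \mapsto t^{a+1}(\log(2/t))^2$ extends continuously by $0$ to $[0,1]$ and is hence bounded. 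Together with $\abs{\zeta} \le 1$, these let me dominate the weights $(1-\zeta^2)^{2\beta}$ and $(1-\zeta^2)^{2\beta-2}$ arising in $|v|^2$ and $|Dv|^2$ by multiples of the weights occurring in the $H^0_{w_j}$- and $H^0_{w_{j+1}}$-norms.

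Then I would run the two cases. In the first, $j \le (k-1)/2$ forces $2j-k \le -1$, so $w_j = -1$, while $2(j+1)-k \le 1$, so $w_{j+1} \in \{-1,0,1\}$. The hypothesis $\mu > 1/2$ gives $2\mu - 2 > -1$, so (ii) bounds both $(1-\zeta^2)^{2\mu}$ and $(1-\zeta^2)^{2\mu-2}$ by a constant times the $H^0_{-1}$-weight; hence $\norm{v}_{L^2}$ and the $L^2$ norm of $-2\mu\zeta(1-\zeta^2)^{\mu-1}D^j u$ are $\ls \norm{D^j u}_{H^0_{-1}} \ls \norm{u}_{\h^k}$. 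Since $2\mu > 1 \ge w_{j+1}$, one of (i), (ii) bounds $(1-\zeta^2)^{2\mu}$ by a constant times the $H^0_{w_{j+1}}$-weight, so the $L^2$ norm of $(1-\zeta^2)^\mu D^{j+1}u$ is $\ls \norm{D^{j+1}u}_{H^0_{w_{j+1}}} \ls \norm{u}_{\h^k}$ (note $j+1 \le k$). In the second case, $j > (k-1)/2$ forces $w_j = 2j-k \ge 0$, and $\beta = j+1-k/2$ is chosen so that $2\beta - 2 = 2j-k = w_j$ and $2\beta = 2j+2-k = w_{j+1}$ (the second because $2j+2-k \ge 2 > -1$, including the endpoint $j+1 = k$ where $w_{j+1} = w_k = k = 2j+2-k$); thus $(1-\zeta^2)^{2\beta-2} = (1-\zeta^2)^{w_j}$ and $(1-\zeta^2)^{2\beta} = (1-\zeta^2)^{w_{j+1}}$ outright, and $(1-\zeta^2)^{2\beta} \le (1-\zeta^2)^{w_j}$ by (i) since $2\beta = w_j + 2 \ge w_j$, which together bound $\norm{v}_{L^2}$, the first summand of $Dv$, and the second summand of $Dv$ by $\norm{D^j u}_{H^0_{w_j}} + \norm{D^{j+1}u}_{H^0_{w_{j+1}}} \ls \norm{u}_{\h^k}$.

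The only real work here is bookkeeping: correctly matching the exponents $2\beta$ and $2\beta-2$ against the available weights $w_j$, $w_{j+1}$, case by case and including the endpoint $j+1 = k$ where one uses the definition of $\h^k$ in place of Theorem \ref{hk_weight_improvement}; and observing that in the first case the strictness $\mu > 1/2$ is precisely what makes $2\mu - 2 > -1$, so the logarithmically weighted $H^0_{-1}$ bound can absorb the term in which $D$ lands on the weight $(1-\zeta^2)^\mu$. No analytic ingredient beyond Proposition \ref{weighted_basic} and Theorem \ref{hk_weight_improvement} is needed.
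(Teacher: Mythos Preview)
Your proof is correct and follows essentially the same approach as the paper's: set $v=(1-\zeta^2)^\beta D^j u$, expand $Dv$ by the product rule, and bound the resulting weighted integrals of $|D^j u|^2$ and $|D^{j+1}u|^2$ via Theorem~\ref{hk_weight_improvement}. Your write-up is in fact slightly more careful than the paper's in spelling out the weight comparisons (your facts (i) and (ii)) and in handling the endpoint $j+1=k$ explicitly, but the argument is the same.
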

\begin{proof}
Fix $j,k \in \N$ with $0 \le j \le k-1$.  Write $\mu_j = j+1-k/2$ when $(k-1)/2 < j$, and let $\mu_j >1/2$ be arbitrary when $j \le (k-1)/2$.  Then, regardless of the value of $j$, we have the inequalities
\begin{equation}\label{H1C0_bound_0}
 \hal < \mu_j \text{ and } \frac{2j-k+2}{2} \le \mu_j.
\end{equation}

Next, we compute 
\begin{equation}
 D[(1-\zeta^2)^{\mu_j} D^j u] = (1-\zeta^2)^{\mu_j} D^{j+1}u  -2 \mu_j \zeta (1-\zeta^2)^{\mu_j-1}    D^j u
\end{equation}
in order to bound 
\begin{equation}
 \norm{(1-\zeta^2)^{\mu_j} D^j u}_{H^1}^2 \ls \int_{-1}^1 (1-\zeta^2)^{2\mu_j-2} \abs{D^j u}^2 +  (1-\zeta^2)^{2\mu_j} \abs{D^{j+1} u}^2.
\end{equation}
In light of \eqref{H1C0_bound_0} and Theorem \ref{hk_weight_improvement}, we may then bound 
\begin{equation}
\int_{-1}^1 (1-\zeta^2)^{2\mu_j-2} \abs{D^j u}^2 +  (1-\zeta^2)^{2\mu_j} \abs{D^{j+1} u}^2 \ls  \norm{D^j u}_{H^0_{\max\{-1,2j-k\}}}^2 +  \norm{D^{j+1} u}_{H^0_{\max\{-1,2j+2-k\}}}^2 \ls \norm{u}_{\h^k}^2.
\end{equation}
Combining the previous two bounds then yields the desired inequality.
\end{proof}

Our third embedding, a variant of Proposition \ref{H1C0_bound}, will be useful as well.

\begin{prop}\label{hkhm_bound_spec}
Suppose that $u \in \h^k$ for $1 \le k \in \N$, and suppose $j,m,\mu \in \N$  satisfy  $j+m \le k$ and $2j +m \le k+2\mu$.  Then    $(1-\zeta^2)^\mu D^j u \in \h^m$ and $\norm{(1-\zeta^2)^\mu D^j u }_{\h^m} \ls \norm{u}_{\h^k}$.
\end{prop}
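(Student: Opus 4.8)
The plan is to expand $D^i[(1-\zeta^2)^\mu D^j u]$ by the Leibniz rule, estimate the derivatives of the weight $(1-\zeta^2)^\mu$ pointwise, and then dominate each resulting term by the improved-weight bounds for $D^n u$ recorded in Theorem~\ref{hk_weight_improvement}(2), supplemented when $n=k$ by the definition of $\norm{\cdot}_{\h^k}$. Set $v = (1-\zeta^2)^\mu D^j u$; since $j+m\le k$ and $u\in\acl^{k-1}((-1,1))$ one checks $v\in\acl^{m-1}((-1,1))$, so that it makes sense to ask whether $v\in\h^m$, and the goal is to bound each summand of $\norm{v}_{\h^m}^2 = \sum_{i=0}^m \int_{-1}^1 (1-\zeta^2)^i \abs{D^i v}^2\,d\zeta$ by $\norm{u}_{\h^k}^2$.

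First I would record the elementary pointwise bound $\abs{D^p[(1-\zeta^2)^\mu]} \ls (1-\zeta^2)^{\max\{\mu-p,\,0\}}$ on $[-1,1]$ for every $p\in\N$: an easy induction gives $D^p[(1-\zeta^2)^\mu] = (1-\zeta^2)^{\mu-p}Q_p(\zeta)$ with $Q_p$ a polynomial when $0\le p\le\mu$, while for $p>\mu$ the derivative is itself a polynomial (and vanishes once $p>2\mu$), and polynomials are bounded on $[-1,1]$. Because $j+i\le j+m\le k$, the Leibniz rule applies almost everywhere and yields $D^i v = \sum_{l=0}^i \binom{i}{l} D^{i-l}[(1-\zeta^2)^\mu]\,D^{j+l}u$; combining with the weight bound gives $(1-\zeta^2)^i\abs{D^i v}^2 \ls \sum_{l=0}^i (1-\zeta^2)^{\,i+2\max\{\mu-(i-l),\,0\}}\abs{D^{j+l}u}^2$.

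It then remains to show, for each $0\le l\le i\le m$ and $n:=j+l\in\{0,\dots,k\}$, that $\int_{-1}^1 (1-\zeta^2)^{\,i+2\max\{\mu-(i-l),\,0\}}\abs{D^n u}^2\,d\zeta \ls \norm{u}_{\h^k}^2$. Writing $\delta_n=\max\{-1,2n-k\}$ and letting $W_n$ be the weight defining $H^0_{\delta_n}((-1,1))$, Theorem~\ref{hk_weight_improvement}(2) for $n\le k-1$ and the definition of $\norm{\cdot}_{\h^k}$ for $n=k$ give $\int_{-1}^1 W_n\abs{D^n u}^2\,d\zeta\ls\norm{u}_{\h^k}^2$, so it suffices to prove the pointwise domination $(1-\zeta^2)^{\,i+2\max\{\mu-(i-l),\,0\}}\ls W_n$ on $(-1,1)$. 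If $\delta_n=-1$ this is immediate, since $W_n = [(1-\zeta^2)(\log(2/(1-\zeta^2)))^2]^{-1}$ is bounded below by a positive constant on $(-1,1)$ (the map $s\mapsto s(\log(2/s))^2$ is bounded on $(0,1]$) while the left side is $\le1$. If $\delta_n=2n-k\ge0$, then $W_n=(1-\zeta^2)^{2n-k}$ and, as $0<1-\zeta^2\le1$, it suffices to verify the exponent inequality $i+2\max\{\mu-(i-l),\,0\}\ge 2(j+l)-k$. When $i-l\le\mu$ the left side equals $2\mu+2l-i$ and the inequality reduces to $2j+i\le k+2\mu$, which holds because $2j+i\le 2j+m\le k+2\mu$. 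When $i-l>\mu$ the left side equals $i$; here $i\ge l+\mu+1$, so from $i\le m$ we get $l\le m-\mu-1$, hence $2j+l\le 2j+m-\mu-1\le k+\mu-1$, and therefore $2(j+l)-k=(2j+l)+l-k\le l+\mu-1\le i-2<i$.

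The combinatorial bookkeeping in the last step is the only delicate point, and the case $i-l>\mu$ is where the two hypotheses $j+m\le k$ and $2j+m\le k+2\mu$ are used in tandem with $i\le m$; the sub-case $n=k$ is automatically subsumed (then $i\le m\le k-j=l$, so $i-l\le 0\le\mu$), as is the logarithmic weight $\delta_n=-1$. If one prefers to avoid differentiating elements of $\acl^{k-1}$ directly, one may instead establish the estimate for $u\in\mathcal{P}$, where every manipulation is classical, and then extend to all of $\h^k$ using the density of $\mathcal{P}$ (Theorem~\ref{hk_basics}(2)), the completeness of $\h^m$, and the fact that convergence in $\h^k$ and in $\h^m$ both imply convergence in $L^2_{\mathrm{loc}}((-1,1))$, which identifies the limit with $(1-\zeta^2)^\mu D^j u$.
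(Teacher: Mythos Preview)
Your proof is correct and follows essentially the same approach as the paper's: expand via the Leibniz rule, bound $\abs{D^p[(1-\zeta^2)^\mu]}$ by $(1-\zeta^2)^{\max\{\mu-p,0\}}$, and then verify the exponent inequality $\max\{-1,2(j+l)-k\}\le i+2\max\{\mu-(i-l),0\}$ so that Theorem~\ref{hk_weight_improvement} applies. The paper merely asserts this inequality after ``breaking to cases based on whether $0\le\mu\le n-1$ or $n\le\mu$,'' whereas you spell out the arithmetic and are more careful about the boundary cases $\delta_n=-1$ and $n=k$; otherwise the arguments coincide.
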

\begin{proof}
Fix $k,j,m,\mu$ as in the hypotheses.  We then use the Leibniz rule  to bound
\begin{equation}\label{hkhm_bound_spec_1}
 \int_{-1}^1 (1-\zeta^2)^\ell \abs{ D^\ell [ (1-\zeta^2)^\mu D^j u]   }^2 \ls  \sum_{n=0}^\ell \int_{-1}^1 (1-\zeta^2)^{\ell + 2 \max\{\mu-n,0\} } \abs{D^{j+\ell-n}u}^2 \text{ for } 0 \le \ell \le m.
\end{equation}
Breaking to cases based on whether $0\le \mu \le n-1$ or $n \le \mu$, we readily check that the assumptions on $j,m,\mu$ imply that 
\begin{equation}
 \max\{-1,2(j+\ell-n)-k\} \le \ell + 2 \max\{\mu-n,0\} \text{ for } 0 \le \ell \le m \text{ and } 0 \le n \le \ell,
\end{equation}
and hence Theorem \ref{hk_weight_improvement} allows us to bound 
\begin{equation}\label{hkhm_bound_spec_2}
 \int_{-1}^1 (1-\zeta^2)^{\ell + 2 \max\{\mu-n,0\} } \abs{D^{j+\ell-n}u}^2  \ls \norm{D^{j+\ell-n} u}_{H^0_{\max\{-1,2(j+\ell-n)-k\}}}^2 \ls \norm{u}_{\h^k}^2
\end{equation}
for $0 \le \ell \le m$ and $0 \le n \le \ell$.  Combining \eqref{hkhm_bound_spec_1} and \eqref{hkhm_bound_spec_2} then yields the desired inequality.
\end{proof}

Next we prove that when $2 \le k \in \N$ the space $\h^k$ is an algebra.

\begin{thm}\label{hs_algebra}
The space $\h^k$ is an algebra when $2 \le k \in \N$, i.e. if $u,v \in \h^k$ then $uv \in \h^k$ and $\norm{uv}_{\h^k} \ls \norm{u}_{\h^k} \norm{v}_{\h^k}$. 
\end{thm}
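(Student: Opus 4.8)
The plan is to prove the algebra property by bounding, for each $0 \le \ell \le k$, the quantity $\int_{-1}^1 (1-\zeta^2)^\ell \abs{D^\ell(uv)}^2\,d\zeta$ via the Leibniz rule, which expands $D^\ell(uv) = \sum_{a+b=\ell} \binom{\ell}{a} D^a u\, D^b v$. Thus it suffices to control each term $\int_{-1}^1 (1-\zeta^2)^\ell \abs{D^a u}^2 \abs{D^b v}^2\,d\zeta$ with $a+b = \ell \le k$. The natural strategy is to split the weight $(1-\zeta^2)^\ell$ between the two factors and then apply the embedding results from Theorem \ref{hk_weight_improvement} and Proposition \ref{H1C0_bound} (or Proposition \ref{hkhm_bound_spec}): one factor will be placed in an $L^\infty$-type space via a $C^{0,1/2}_b$ estimate, and the other in a weighted $L^2$ space.

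More concretely, I would argue as follows. Since $a + b = \ell \le k$, by symmetry we may assume $a \le b$, so $a \le \ell/2 \le k/2$; in particular $a \le (k-1)/2$ unless $k$ is even and $a = k/2$, which forces $b = k/2$ as well and $\ell = k$. The idea is to write $(1-\zeta^2)^\ell = (1-\zeta^2)^{2\mu_a} \cdot (1-\zeta^2)^{\ell - 2\mu_a}$ where $\mu_a$ is the exponent appearing in Proposition \ref{H1C0_bound} for the index $a$ (namely $\mu_a = a+1-k/2$ if $a > (k-1)/2$, and $\mu_a > 1/2$ arbitrary, to be chosen, otherwise). By Proposition \ref{H1C0_bound}, $(1-\zeta^2)^{\mu_a} D^a u \in C^{0,1/2}_b([-1,1])$ with norm $\ls \norm{u}_{\h^k}$, so $(1-\zeta^2)^{2\mu_a}\abs{D^a u}^2 \ls \norm{u}_{\h^k}^2$ pointwise. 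It then remains to show $\int_{-1}^1 (1-\zeta^2)^{\ell - 2\mu_a} \abs{D^b v}^2\,d\zeta \ls \norm{v}_{\h^k}^2$, i.e. that $D^b v \in H^0_{\ell - 2\mu_a}((-1,1))$, which by Theorem \ref{hk_weight_improvement}(2) holds provided $\max\{-1, 2b - k\} \le \ell - 2\mu_a$. One checks this inequality holds in all cases: when $b \le k-1$ we have $2b - k = 2\ell - 2a - k$, and substituting the relevant value of $\mu_a$ and using $a \le b$ yields $\ell - 2\mu_a \ge 2b - k$; the endpoint case $b = k$ (forcing $a = 0$, $\ell = k$) needs $u \in C^{0,1/2}_b$, available from Theorem \ref{hk_weight_improvement}(1) since $k \ge 2$, paired with $D^k v \in H^0_k$, which is immediate from the definition of $\norm{v}_{\h^k}$.

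The main obstacle, and the step requiring the most care, is the bookkeeping of exponents: verifying in every case that the split of the weight $(1-\zeta^2)^\ell$ is compatible simultaneously with a valid $C^{0,1/2}_b$ estimate for one factor (which requires the half-power threshold $\mu_a > 1/2$) and a valid weighted-$L^2$ estimate for the other (which requires $\max\{-1, 2b-k\} \le \ell - 2\mu_a$). The parity of $k$ and the borderline index $a = (k-1)/2$ or $a = k/2$ are the delicate spots, and the fact that $\mu_a$ can be chosen \emph{arbitrarily} close to $1/2$ from above (when $a \le (k-1)/2$) is exactly what makes the inequalities work — I would emphasize that the strict inequality in Proposition \ref{H1C0_bound}(1) gives just enough room. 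Once these case checks are done, summing the finitely many Leibniz terms over $0 \le \ell \le k$ and over $a + b = \ell$ yields $\norm{uv}_{\h^k}^2 \ls \norm{u}_{\h^k}^2 \norm{v}_{\h^k}^2$, completing the proof. As an alternative that avoids some casework, one could instead invoke Proposition \ref{hkhm_bound_spec} directly: for a term with $a \le b$, write it as $(1-\zeta^2)^{a} D^a u \in \h^{k-a}$ (valid since $2a + 0 \le k + 2a$ and $a \le k$) paired against $D^b v$ with an appropriate leftover weight, but the hypotheses of that proposition would still need the same exponent verification, so I expect the direct approach above to be cleanest.
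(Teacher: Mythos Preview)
Your approach is essentially the same as the paper's: expand $D^\ell(uv)$ by Leibniz, and for each cross term put one factor in $L^\infty$ via Proposition~\ref{H1C0_bound} (or the embedding $\h^k \hookrightarrow C^{0,1/2}_b$) and the other in a weighted $L^2$ via Theorem~\ref{hk_weight_improvement}. The paper organizes this by induction on $k$ (so only the top-order term $\ell=k+1$ needs the case analysis at each step), while you do all orders $0\le\ell\le k$ at once; the tools and the case splits are the same.

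There is one bookkeeping slip in your sketch. You propose to use Proposition~\ref{H1C0_bound} with $\mu_a>1/2$ for every $a\le(k-1)/2$, and assert that ``one checks $\ell-2\mu_a\ge 2b-k$ in all cases.'' This fails for $a=0$: take $\ell=k-1$, $b=k-1$; then the requirement becomes $k-1-2\mu_0\ge k-2$, i.e.\ $\mu_0\le 1/2$, contradicting $\mu_0>1/2$. (The case $\ell=0$, $a=b=0$ has the same issue.) You already recognize that $a=0$, $b=k$ needs the direct bound $\|u\|_{L^\infty}\lesssim\|u\|_{\h^k}$ from Theorem~\ref{hk_weight_improvement}(1); the fix is simply to use that bound for \emph{every} $a=0$ term, not just the top one. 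The paper's induction handles this automatically because its $j=0$ case at the top order always uses the $L^\infty$ embedding directly. With that correction your argument goes through.
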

\begin{proof}
We proceed by induction on $k \ge 2$.  Suppose that $u,v \in \h^2$.  Using the $L^\infty$ bounds from Theorem \ref{hk_weight_improvement}, we may estimate 
\begin{equation}\label{hs_algebra_1}
 \int_{-1}^1 \abs{uv}^2 \ls \norm{u}_{L^\infty}^2 \norm{v}_{L^2}^2 \le \norm{u}_{\h^2}^2 \norm{v}_{\h^2}^2
\end{equation}
and 
\begin{equation}\label{hs_algebra_2}
 \int_{-1}^1 (1-\zeta^2) [\abs{u}^2 \abs{Dv}^2 + \abs{Du}^2 \abs{v}^2  ]  \ls 
 \norm{u}_{L^\infty}^2 \norm{v}_{\h^1}^2 + \norm{u}_{\h^1}^2 \norm{v}_{L^\infty}^2 
 \ls  \norm{u}_{\h^2}^2 \norm{v}_{\h^2}^2.
\end{equation}
On the other hand, we can apply Proposition \ref{H1C0_bound} (with $k=2$ and $j=1$) and Theorem \ref{hk_weight_improvement}  to bound 
\begin{equation}
 \norm{ (1-\zeta^2) D u }_{L^\infty} \ls \norm{u}_{\h^2} \text{ and } \norm{Dv}_{L^2} \ls \norm{v}_{\h^2},
\end{equation}
and so
\begin{multline}\label{hs_algebra_3}
 \int_{-1}^1 (1-\zeta^2)^2 [\abs{u}^2 \abs{D^2 v}^2 +  \abs{Du}^2 \abs{Dv}^2 + \abs{D^2 u} \abs{v}  ]  \ls 
\norm{u}_{L^\infty}^2 \norm{v}_{\h^2}^2 +  \norm{u}_{\h^2}^2 \norm{v}_{L^\infty}^2 +  \int_{-1}^1 (1-\zeta^2)^2  \abs{Du}^2 \abs{Dv}^2   \\
\ls \norm{u}_{\h^2}^2 \norm{v}_{\h^2}^2 +  \norm{ (1-\zeta^2) D u }_{L^\infty} \norm{Dv}_{L^2} \ls \norm{u}_{\h^2}^2 \norm{v}_{\h^2}^2.
\end{multline}
Combining \eqref{hs_algebra_1}, \eqref{hs_algebra_2}, and \eqref{hs_algebra_3} with the Leibniz rule then shows that $uv \in \h^2$ with $\norm{uv}_{\h^2} \ls \norm{u}_{\h^2} \norm{v}_{\h^2}$.  This completes the proof of the base case of the induction.

Suppose now that the result is proved for $2 \le k$ and suppose that $u,v \in \h^{k+1}$.  Then $u,v \in \h^k$, so the induction hypothesis implies that $uv \in \h^k$ with $\norm{uv}_{\h^k} \ls \norm{u}_{\h^k} \norm{v}_{\h^k}$.  Note that for $w \in \acl^{k}((-1,1))$ we have that 
\begin{equation}\label{hs_algebra_4}
 \norm{w}_{\h^{k+1}}^2 = \norm{w}_{\h^k}^2 + \int_{-1}^1 (1-\zeta^2)^{k+1} \abs{D^{k+1} w}^2 
\end{equation}
so we only need to handle the last term with $w = uv$.  We use the Leibniz rule to bound 
\begin{equation}\label{hs_algebra_5}
 \int_{-1}^1 (1-\zeta^2)^{k+1} \abs{D^{k+1}(uv)}^2 \ls   \sum_{j=0}^{k+1} \int_{-1}^1 (1-\zeta^2)^{k+1} \abs{D^j u}^2  \abs{D^{k+1-j} v}^2. 
\end{equation}
We now break to cases based on the value of $j$.

We begin with the cases $j \in \{0,k+1\}$, in which case we use Theorem \ref{hk_weight_improvement} to estimate
\begin{multline}
 \int_{-1}^1 (1-\zeta^2)^{k+1} \abs{u}^2  \abs{D^{k+1} v}^2 +  \int_{-1}^1 (1-\zeta^2)^{k+1} \abs{D^{k+1} u}^2  \abs{ v}^2  \ls \norm{u}_{L^\infty}^2 \norm{v}_{\h^{k+1}}^2 +  \norm{u}_{\h^{k+1}}^2 \norm{v}_{L^\infty}^2 \\
 \ls \norm{u}_{\h^{k+1}}^2 \norm{v}_{\h^{k+1}}^2.
\end{multline}

In the second case we consider
\begin{equation}
 1 \le j \le \frac{(k+1)-1}{2} = \frac{k}{2}.
\end{equation}
In this setting we easily check that $1 \le k+1-2j$, and hence
\begin{equation}
 k+1-2j = \max\{-1, 2(k+1-j) -k -1\}.
\end{equation}
Using Proposition \ref{H1C0_bound} with $\mu =j$ and Theorem \ref{hk_weight_improvement}, we have 
\begin{equation}
 \norm{(1-\zeta^2)^j D^j u}_{L^\infty} \ls \norm{u}_{\h^{k+1}}  \text{ and } \norm{D^{k+1-j} u}_{H^0_{ k+1-2j }} \ls \norm{v}_{\h^{k+1}}.
\end{equation}
Thus, 
\begin{equation}
\int_{-1}^1 (1-\zeta^2)^{k+1} \abs{D^j u}^2  \abs{D^{k+1-j} v}^2 \ls \norm{u}_{\h^{k+1}}^2  \int_{-1}^1 (1-\zeta^2)^{k+1-2j}   \abs{D^{k+1-j} v}^2 \le \norm{u}_{\h^{k+1}}^2 \norm{v}_{\h^{k+1}}^2. 
\end{equation}

Next, consider the case 
\begin{equation}
 \frac{k}{2}+1 \le j \le k.
\end{equation}
This requires that 
\begin{equation}
1 \le k+1-j \le \frac{k}{2},
\end{equation}
and so we may use the same argument from the previous case with $u$ and $v$ swapped to see that 
\begin{equation}
 \int_{-1}^1 (1-\zeta^2)^{k+1} \abs{D^j u}^2  \abs{D^{k+1-j} v}^2 \ls \norm{u}_{\h^{k+1}}^2 \norm{v}_{\h^{k+1}}^2.
\end{equation}

The only remaining case is to handle 
\begin{equation}
 \frac{k}{2} < j < \frac{k}{2} + 1,
\end{equation}
which occurs only when $k$ is odd.  In this setting Proposition \ref{H1C0_bound} now shows that 
\begin{equation}
  \norm{(1-\zeta^2)^{j+1/2 -k/2} D^j u}_{L^\infty} \ls \norm{u}_{\h^{k+1}}.
\end{equation}
From this and the fact that $2 \le k$ and $j < k/2 +1$ imply that $k+1-j \le 2k-2j$, we then see that
\begin{multline}
  \int_{-1}^1 (1-\zeta^2)^{k+1} \abs{D^j u}^2  \abs{D^{k+1-j} v}^2 \ls \norm{u}_{\h^{k+1}}^2  \int_{-1}^1 (1-\zeta^2)^{2k-2j}   \abs{D^{k+1-j} v}^2  \\
\ls \norm{u}_{\h^{k+1}}^2  \int_{-1}^1 (1-\zeta^2)^{k+1-j}   \abs{D^{k+1-j} v}^2 \ls \norm{u}_{\h^{k+1}}^2  \norm{v}_{\h^{k+1}}^2. 
\end{multline}

Synthesizing the analysis from these four cases then shows that
\begin{equation}
  \sum_{j=0}^{k+1} \int_{-1}^1 (1-\zeta^2)^{k+1} \abs{D^j u}^2  \abs{D^{k+1-j} v}^2 \ls \norm{u}_{\h^{k+1}}^2  \norm{v}_{\h^{k+1}}^2.
\end{equation}
Combining this with \eqref{hs_algebra_3} and \eqref{hs_algebra_4}, we then deduce that $uv \in \h^{k+1}$ and $\norm{uv}_{\h^{k+1}} \ls \norm{u}_{\h^{k+1}}  \norm{v}_{\h^{k+1}}$.  This completes the inductive step, and hence the proof.
\end{proof}

A particular consequence of Theorem \ref{hs_algebra} is that if $\alpha_0,\dotsc,\alpha_m \in \R$ then for $2\le k \in \N$  the polynomial map  $\h^k \ni u \mapsto \sum_{\ell=0}^m \alpha_\ell u^\ell \in \h^k$ is smooth.  We next aim to establish analogous results for more general composition maps.  We begin with a crucial technical result showing composition is well-defined and enjoys useful estimates.

\begin{thm}\label{hk_composition}
Let $\varnothing \neq E \subseteq U \subseteq \R$ with $U$ open, and let $\varnothing \neq \mathcal{E}_E \subseteq \h^2$ be such that $u \in \mathcal{E}_E$ implies that $u((-1,1)) \subseteq E$.  Suppose that $2 \le k\in \N$ and $f \in C^k(U)$ satisfies 
\begin{equation}
 \norm{f}_{C^k_b(E)} = \sup_{z \in E} \sup_{0 \le j \le k} \abs{D^j f(z)} < \infty.
\end{equation}
Then for  $u \in \h^k \cap \mathcal{E}_E$ we have that $f \circ u \in \h^k$ and 
\begin{equation}
 \norm{f\circ u}_{\h^k} \ls \norm{f}_{C^k_b(E)}  (1+\norm{u}_{\h^k})^k.
\end{equation}
Here the implicit constant is independent of $f$ and $u$.
\end{thm}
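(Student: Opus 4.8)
\textbf{Proof plan for Theorem \ref{hk_composition}.}

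The strategy is to use the Leibniz-type formula for derivatives of a composition (the Fa\`a di Bruno formula) together with the weighted embedding estimates already established, especially Theorem \ref{hk_weight_improvement}, Proposition \ref{H1C0_bound}, and Proposition \ref{hkhm_bound_spec}. First I would record the classical fact that for $1 \le \ell \le k$ one has
\begin{equation}
 D^\ell(f \circ u) = \sum \frac{\ell!}{m_1! \cdots m_\ell!} \, (D^p f)\circ u \, \prod_{i=1}^\ell \left( \frac{D^i u}{i!} \right)^{m_i},
\end{equation}
where the sum runs over all tuples $(m_1,\dots,m_\ell) \in \N^\ell$ with $\sum_i i\, m_i = \ell$ and $p = \sum_i m_i \le \ell$. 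Plugging this into the definition \eqref{def:Hk_norm} of $\norm{f\circ u}_{\h^k}^2$ reduces the problem to bounding, for each such tuple and each $0 \le \ell \le k$,
\begin{equation}\label{composition_plan_core}
 \int_{-1}^1 (1-\zeta^2)^\ell \, \abs{(D^p f)\circ u}^2 \prod_{i=1}^\ell \abs{D^i u}^{2 m_i} \, d\zeta \ls \norm{f}_{C^k_b(E)}^2 \, (1+\norm{u}_{\h^k})^{2k}.
\end{equation}
Since $(D^p f)\circ u$ is bounded pointwise by $\norm{f}_{C^k_b(E)}$, this factor pulls out immediately, and the term with $p=0$ (i.e.\ $\ell=0$) is just $\int |f\circ u|^2 \le 2 \norm{f}_{C^0_b(E)}^2$; so the heart of the matter is to control the weighted product $\int (1-\zeta^2)^\ell \prod_i \abs{D^i u}^{2m_i}$ by a constant times $(1+\norm{u}_{\h^k})^{2k}$.

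The key bookkeeping observation is that $\sum_i i\, m_i = \ell$, so one can distribute the total weight $(1-\zeta^2)^\ell$ as $\prod_i (1-\zeta^2)^{i\, m_i}$, i.e.\ assign weight $(1-\zeta^2)^{i}$ to each factor $\abs{D^i u}^2$. Thus it suffices to bound each factor $(1-\zeta^2)^{i/2} D^i u$ in an appropriate norm and combine via H\"older. For the factors with $i \le (k-1)/2$, Proposition \ref{H1C0_bound}(1) (applied with $\mu$ slightly larger than $1/2$, noting $i \le (1-\zeta^2)$-power $i/2 \ge \mu$ is not what we want—rather we use that $(1-\zeta^2)^{i/2}$ with $i \ge 1$ gives $\mu \ge 1/2$, and for $i=0$ the factor is just $u \in L^\infty$) yields an $L^\infty$ bound $\norm{(1-\zeta^2)^{i/2} D^i u}_{L^\infty} \ls \norm{u}_{\h^k}$, while Theorem \ref{hk_weight_improvement}(2) gives $L^2$-type control with the weight exponent $\max\{-1, 2i-k\} \le i$ whenever $i \le k$. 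The natural split: put \emph{one} of the factors (say the one with the largest $i$, or any single factor) in the weighted $L^2$ space $H^0_{\max\{-1,2i-k\}}$ via Theorem \ref{hk_weight_improvement}(2)—this is legitimate since $\max\{-1,2i-k\} \le i$—and put all the remaining factors in $L^\infty$ using Proposition \ref{H1C0_bound} (or, for $i=0$, Theorem \ref{hk_weight_improvement}(1)). One must check that Proposition \ref{H1C0_bound} applies to give the $L^\infty$ bound $\norm{(1-\zeta^2)^{\nu_i} D^i u}_{L^\infty}\ls\norm u_{\h^k}$ for a power $\nu_i \le i/2$; part (1) handles $i \le (k-1)/2$ with any $\nu_i>1/2$ (so $\nu_i \le i/2$ fails for $i=1$!—hence for $i=1$ one instead keeps $Du$ in $L^2$ with the allowable weight, absorbing it into the single weighted-$L^2$ slot), and part (2) handles $i > (k-1)/2$ with $\nu_i = i+1-k/2 \le i/2$ precisely because $i \le k-1 < k$. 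A short case analysis confirms that the leftover weight power after extracting one $L^2$ factor and bounding the rest in $L^\infty$ is nonnegative, so $(1-\zeta^2)^{\text{(leftover)}}$ is harmless on $(-1,1)$.

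The main obstacle, and where care is genuinely needed, is the edge cases: specifically, tuples in which a low-order derivative $D^1 u$ appears with high multiplicity, since Proposition \ref{H1C0_bound}(1) cannot bound $(1-\zeta^2)^{1/2} D u$ in $L^\infty$ (only $(1-\zeta^2)^{\mu} Du$ for $\mu > 1/2$). Here one cannot afford to lose the extra $(1-\zeta^2)^{\mu - 1/2}$ on \emph{every} copy of $Du$. The resolution is to be more economical: rather than forcing a rigid ``one factor in $L^2$, rest in $L^\infty$'' split, use Proposition \ref{hkhm_bound_spec} to observe that products of the form $(1-\zeta^2)^\nu \prod D^{i} u$ themselves lie in $\h^0 = L^2$ with good bounds whenever the weight and derivative exponents satisfy the stated inequalities $j+m \le k$, $2j+m \le k + 2\mu$; applied iteratively (using the algebra property Theorem \ref{hs_algebra} at intermediate stages to multiply partially-assembled factors in $\h^2$ and then Proposition \ref{hkhm_bound_spec} to bring in each further $D^i u$ with its weight $(1-\zeta^2)^{i}$), one builds up the full product while always staying within a space that embeds in $L^2$. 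The constraint $\sum_i i\, m_i = \ell \le k$ is exactly what makes the hypotheses of Proposition \ref{hkhm_bound_spec} verifiable at each step. The total number of factors is at most $k$ (since each contributes at least $i \ge 1$ to the sum $\ell \le k$, except $D^0u$ factors which contribute $0$ but are bounded in $L^\infty$ by $\norm u_{\h^k}$), which accounts for the exponent $k$ in $(1+\norm{u}_{\h^k})^k$; the ``$1+$'' absorbs the low-order ($L^\infty$) factors and the $\ell=0$ term uniformly. Finally, summing over the finitely many Fa\`a di Bruno tuples and over $0 \le \ell \le k$ produces the stated estimate with an implicit constant depending only on $k$, hence independent of $f$ and $u$.
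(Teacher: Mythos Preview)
Your overall strategy---Fa\`a di Bruno followed by distributing the weight $(1-\zeta^2)^\ell$ among the factors and placing one in weighted $L^2$ (via Theorem \ref{hk_weight_improvement}) with the rest in $L^\infty$ (via Proposition \ref{H1C0_bound})---is exactly the engine behind the paper's proof. The paper, however, organizes it via induction on $k$: assuming the bound in $\h^{k-1}$, one only needs to control the top term $\int (1-\zeta^2)^k |D^k(f\circ u)|^2$, which focuses the entire case analysis on $\ell = k$ and avoids treating all $0\le \ell \le k$ simultaneously.

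The genuine gap is your handling of the $D^1 u$ edge case. Your rigid allocation of weight $(1-\zeta^2)^{i/2}$ per copy of $|D^i u|$ does fail for $i=1$, but the proposed fallback---iterating Proposition \ref{hkhm_bound_spec} together with the $\h^2$ algebra property---does not go through as stated: Proposition \ref{hkhm_bound_spec} yields $(1-\zeta^2)^\mu D^j u \in \h^m$ only when $j+m \le k$, so for instance when $k=2$ and $j=1$ you never reach $\h^2$ and therefore cannot invoke Theorem \ref{hs_algebra}. (There is also an arithmetic slip: $\nu_i = i+1-k/2 \le i/2$ is equivalent to $i \le k-2$, not $i \le k-1$.) The paper's resolution stays entirely within the $L^2/L^\infty$ split but abandons the even allocation: it breaks on the number $m$ of factors and on the position of the largest index $j_m$ relative to $(k-1)/2$, assigns to each $L^\infty$ factor precisely the exponent supplied by Proposition \ref{H1C0_bound} (choosing $\mu$ just above $1/2$ when part (1) applies), and then checks case by case that the leftover exponent on the remaining factor dominates $\max\{-1,2j_m-k\}$---or, when $m\ge 3$ and every $j_i \le (k-1)/2$, that the residual power exceeds $-1$ so the bare integral converges with no $L^2$ factor at all. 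Proposition \ref{hkhm_bound_spec} is never used.
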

\begin{proof}
We will prove the result by inducting on $2\le k \in \N$.  Throughout the proof any implicit constants in inequalities of the form $a \ls b$ are independent of $f$ and $u$ but may depend on $k$ and various other universal constants.  We will also abbreviate $\norm{f}_{C^k_b} = \norm{f}_{C^k_b(E)}$, noting that 
\begin{equation}
\sup_{\zeta \in (-1,1)}  \abs{D^jf(u(\zeta))} \le \sup_{z\in E} \abs{D^j f(z)}  \le \norm{f}_{C^k_b} 
\end{equation}
for $0 \le j \le k$ and $u\in \mathcal{E}_E$.

We begin with the case $k=2$, assuming that $u \in \mathcal{E}_E$.  We first bound 
\begin{equation}
 \int_{-1}^1 \abs{f\circ u}^2 \ls \norm{f}_{C^2_b}^2
\end{equation}
and 
\begin{equation}
 \int_{-1}^1 (1-\zeta^2) \abs{Df(u) Du}^2 \le \norm{f}_{C^2_b}^2 \int_{-1}^1 (1-\zeta^2) \abs{Du}^2 \le  \norm{f}_{C^2_b}^2 \norm{u}_{\h^2}^2.
\end{equation}
We then use Proposition \ref{H1C0_bound} (with $k=2$ and $j=1$) and Theorem \ref{hk_weight_improvement} to bound 
\begin{multline}
  \int_{-1}^1 (1-\zeta^2)^2 [\abs{Df(u) D^2 u}^2 + \abs{D^2f(u) (Du)^2 }^2  ] \ls \norm{f}_{C^2_b}^2 \int_{-1}^1 (1-\zeta^2)^2 \abs{D^2u}^2 \\
  +  \norm{f}_{C^2_b}^2 \norm{(1-\zeta^2) D u}_{L^\infty}^2 \int_{-1}^1 \abs{Du}^2 \ls \norm{f}_{C^2_b}^2(\norm{u}_{\h^2}^2 + \norm{u}_{\h^2}^4   ).
\end{multline}
Upon combining these three bounds and using the chain rule, we then find that 
\begin{equation}
 \norm{f\circ u}_{\h^2}^2 \ls \norm{f}_{C^2_b}^2  (1+\norm{u}_{\h^2})^4.
\end{equation}
This completes the proof of the base case $k=2$.

Now suppose that $k \ge 3$ and that the result has been proved for $k-1 \ge 2$.  Consider $u \in \h^k \cap \mathcal{E}_E$. Since $u \in \h^{k-1}$ we then have that  $f \circ u \in \h^{k-1}$ and  
\begin{equation}
 \norm{f\circ u}_{\h^{k-1}}^2 \ls \norm{f}_{C^{k-1}_b}^2  (1+\norm{u}_{\h^{k-1}})^{2(k-1)}.
\end{equation}
Thus, to establish the desired result for $k$ it suffices to prove that 
\begin{equation}\label{hk_composition_1}
 \int_{-1}^1 (1-\zeta^2)^k \abs{ D^k  [ f\circ u ] }^2  \ls \norm{f}_{C^k_b}^2  (1+\norm{u}_{\h^k})^{2k}.
\end{equation}

To prove \eqref{hk_composition_1} we begin by employing the Fa\`a di Bruno formula to bound
\begin{equation}\label{hk_composition_2}
 \int_{-1}^1 (1-\zeta^2)^k \abs{ D^k  [ f\circ u ] }^2 \ls \sum_{m=1}^k \sum_{j \in J(m,k)} \int_{-1}^1 (1-\zeta^2)^k \abs{D^mf(u)}^2 \abs{D^{j_1} u \cdots D^{j_m} u }^2,
\end{equation}
where 
\begin{equation}
 J(m,k) = \{j \in \{1,\dotsc,k\}^m \st j_1 +\cdots + j_m = k \text{ and } j_1 \le j_2 \le \dotsc \le j_m\}.
\end{equation}
Note that for $j \in J(m,k)$ we always have the bounds 
\begin{equation}\label{hk_composition_3}
 j_1 \le \frac{k}{m} \le j_m \le k -  m+1.
\end{equation}
Next, we will break to three cases to estimate the right side of \eqref{hk_composition_2}.  

\emph{Case 1,  $m=1$:}   Assume that $m=1$.  Then we may bound the right side of \eqref{hk_composition_2} via 
\begin{multline}\label{hk_composition_3_5}
 \sum_{j \in J(1,k)} \int_{-1}^1 (1-\zeta^2)^k \abs{D^mf(u)}^2 \abs{D^{j_1} u \cdots D^{j_m} u }^2 = \int_{-1}^1 (1-\zeta^2)^k \abs{Df(u)}^2 \abs{D^{k} u}^2 \\
 \le \norm{f}_{C^k_b}^2 \int_{-1}^1 (1-\zeta^2)^k \abs{D^{k} u}^2 \le \norm{f}_{C^k_b}^2 \norm{u}_{\h^k}^2.
\end{multline}

\emph{Case 2,  $m=2$:} In the second case we assume that $m=2$.  Then $j \in J(2,k)$ means that $j=(j_1,j_2)$ with $j_1 \le j_2$ and $j_1 + j_2 =k$.  This implies that 
\begin{equation}
 j_1 \le \frac{k}{2} \le j_2 \text{ and hence that } \frac{k-1}{2} < \frac{k}{2} \le j_2.
\end{equation}
We break to two subcases.  In the first we assume that $j_1 \le (k-1)/2$ and set $\mu = 1 > 1/2$, which then implies that    
\begin{equation}
 -1 < k -2\mu \text{ and } 2j_2 -k \le  k-2\mu.
\end{equation}
We may then use the first item of Proposition \ref{H1C0_bound} on the $j_1$ term with $\mu=1$, and then use  Theorem \ref{hk_weight_improvement} on the $j_2$ term in order to bound  
\begin{multline}\label{hk_composition_4}
 \int_{-1}^1 (1-\zeta^2)^k \abs{D^2 f(u)}^2 \abs{D^{j_1} u  D^{j_2} u }^2 \ls \norm{f}_{C^k_b}^2 \norm{(1-\zeta^2) D^{j_1} u}_{L^\infty}^2   \int_{-1}^1 (1-\zeta^2)^{k-2\mu}   \abs{D^{j_2} u }^2  \\
 \ls  \norm{f}_{C^k_b}^2 \norm{u}_{\h^k}^2 \norm{D^{j_2} u}_{H^0_{\max\{-1,2 j_2 -k\}}}^2  \ls  \norm{f}_{C^k_b}^2 \norm{u}_{\h^k}^4.
\end{multline}
This completes the first subcase.

In the second subcase we assume that $(k-1)/2 < j_1$.   Since $2 \le k$ we have that 
\begin{equation}
 j_1 \le \frac{k}{2} < k - \hal \text{ and }  2(k-j_1) +2j_1 \le 3k-2,
\end{equation}
which imply that
\begin{equation}
 -1 < 2(k-j_1-1) \text{ and } 2j_2 -k \le  2(k-j_1-1).
\end{equation}
Consequently, we may use the second item of Proposition \ref{H1C0_bound} on the $j_1$ term and 
Theorem \ref{hk_weight_improvement} on the $j_2$ term in order to bound  
\begin{multline}\label{hk_composition_5}
 \int_{-1}^1 (1-\zeta^2)^k \abs{D^2 f(u)}^2 \abs{D^{j_1} u  D^{j_2} u }^2 \ls \norm{f}_{C^k_b}^2 \norm{(1-\zeta^2)^{j_1+1-k/2} D^{j_1} u}_{L^\infty}^2   \int_{-1}^1 (1-\zeta^2)^{k-2(j_1+1-k/2)} \abs{D^{j_2} u }^2  \\
 \ls  \norm{f}_{C^k_b}^2 \norm{u}_{\h^k}^2 \norm{D^{j_2} u}_{H^0_{\max\{-1,2 j_2 -k\}}}^2  \ls  \norm{f}_{C^k_b}^2 \norm{u}_{\h^k}^4.
\end{multline} 
This completes the second subcase.

The two subcases cover all possibilities when $m=2$.  We then deduce from \eqref{hk_composition_4} and \eqref{hk_composition_5} that 
\begin{equation}\label{hk_composition_6}
\sum_{j \in J(2,k)} \int_{-1}^1 (1-\zeta^2)^k \abs{D^2f(u)}^2 \abs{D^{j_1} u  D^{j_1} u }^2  \norm{f}_{C^k}^2 \norm{u}_{\h^k}^4.
\end{equation}

\emph{Case 3,  $3 \le m \le k$:}  In the third case we assume that $3 \le m \le k$.  If $j \in J(m,k)$ satisfies $(k-1)/2 < j_{m-1}$, then 
\begin{equation}
 k-1 < j_{m-1} + j_m \le k-1,
\end{equation}
a contradiction.  Thus, for all $j \in J(m,k)$ we must have that $j_{m-1} \le (k-1)/2$.  Next, We break to two subcases based on the size of $j_m$.

In the first subcase we assume that $j_m \le (k-1)/2$.  We may then use the first item of Proposition \ref{H1C0_bound} on each of the $j_i$ terms with 
\begin{equation}
 \mu = \frac{k+1/2}{2m} > \frac{k}{2m} \ge \hal,
\end{equation}
in order to bound 
\begin{multline}\label{hk_composition_7}
 \int_{-1}^1 (1-\zeta^2)^k \abs{D^mf(u)}^2 \abs{D^{j_1} u \cdots D^{j_m} u }^2 \ls \norm{f}_{C^k_b}^2 \prod_{i=1}^m  \norm{(1-\zeta^2)^{\mu} D^{j_i} u}_{L^\infty}^2  
 \int_{-1}^1 (1-\zeta^2)^{k-2m\mu}  \\
 \ls  \norm{f}_{C^k_b}^2 \norm{u}_{\h^k}^{2m} \int_{-1}^1 (1-\zeta^2)^{-1/2} \ls   \norm{f}_{C^k_b}^2 \norm{u}_{\h^k}^{2m}.
\end{multline}
This completes the first subcase.

In the second subcase we assume that $(k-1)/2 < j_m$.  Since $m < k+2$ we have that 
\begin{equation}
\hal < \frac{k+1}{2(m-1)},
\end{equation}
and this then allows us to pick $\mu \in (1/2,1]$ satisfying 
\begin{equation}
 \mu < \frac{k+1}{2(m-1)}.
\end{equation}
This and \eqref{hk_composition_3} require 
\begin{equation}
 -1 < k -2(m-1) \mu \text{ and } 2j_{m} -k \le k - 2(m-1)\mu, 
\end{equation}
so we may again apply the first item of Proposition \ref{H1C0_bound} with this choice of $\mu$ to the $j_1,\dotsc,j_{m-1}$ terms and Theorem \ref{hk_weight_improvement} on the $j_2$ term to bound 
\begin{multline}\label{hk_composition_8}
 \int_{-1}^1 (1-\zeta^2)^k \abs{D^mf(u)}^2 \abs{D^{j_1} u \cdots D^{j_m} u }^2 \ls  \norm{f}_{C^k_b}^2 \prod_{i=1}^{m-1}  \norm{(1-\zeta^2)^{\mu} D^{j_i} u}_{L^\infty}^2  \int_{-1}^1 (1-\zeta^2)^{k-2(m-1)\mu}   \abs{D^{j_{m}} u }^2  \\
  \ls 
\norm{f}_{C^k_b}^2 \norm{u}_{\h^k}^{2m-2}   
  \norm{D^{j_{m}} u}_{H^0_{\max\{-1,2 j_{m} -k\}}}^2  \ls \norm{f}_{C^k_b}^2 \norm{u}_{\h^k}^{2m}   
\end{multline}
This completes the second subcase.

The two subcases cover all possibilities when $3 \le m\le k$.  We then deduce from \eqref{hk_composition_7} and \eqref{hk_composition_8} that 
\begin{equation}\label{hk_composition_9}
\sum_{m=3}^k \sum_{j \in J(m,k)} \int_{-1}^1 (1-\zeta^2)^k \abs{D^mf(u)}^2 \abs{D^{j_1} u \cdots D^{j_m} u }^2 \ls \norm{f}_{C^k_b}^2 \sum_{m=3}^k \norm{u}_{\h^k}^{2m}.
\end{equation}

The three cases considered above cover all possibilities of $1 \le m \le k$.  Hence, we may combine \eqref{hk_composition_2} with \eqref{hk_composition_3_5}, \eqref{hk_composition_6}, and \eqref{hk_composition_9} to conclude that 
\begin{equation}
  \int_{-1}^1 (1-\zeta^2)^k \abs{ D^k  [ f\circ u ] }^2 \ls \norm{f}_{C^k_b}^2 (1+\norm{u}_{\h^k})^{2k}.
\end{equation}
This is \eqref{hk_composition_1}, which completes the proof of the induction step, and hence the proof.
\end{proof}

We now prove that composition with certain functions induces a $C^m$ map on $\h^k$ for $k \ge 2$.  This is in some sense analogous to the well-known ``$\omega-$lemmas'': see Proposition 2.4.18 in \cite{AMR_1988} for the result in $C^0$ spaces, and \cite{IKT_2013} for results in standard Sobolev spaces.  This result is essential for our use of the $\h^k$ spaces in the Crandall-Rabinowitz framework.

\begin{thm}\label{hk_comp_Cm}
Let $k,m \in \N$ with $k \ge 2$, and let $\varnothing \neq U \subseteq \R$ be open and $f \in C^{k+m+1}(U)$.  Define the open set 
\begin{equation}
 \h^2[U] = \{u \in \h^2 \st \overline{u((-1,1))} \subset U \}.
\end{equation}
Then the map $\Phi : \h^k \cap \h^2[U] \to \h^k$ defined by $\Phi(u) = f\circ u$ is well-defined and $C^m$.  Moreover, if $m \ge 1$ then for $u \in \h^k \cap \h^2[U]$ and $1 \le j \le m$ we have that $D^j \Phi(u) \in \L^j(\h^k;\h^k)$ is given by  
\begin{equation}\label{hk_comp_Cm_0}
 D^j\Phi(u) (w_1, \dotsc, w_j)  = D^j f(u) \prod_{i=1}^j w_i.
\end{equation}
\end{thm}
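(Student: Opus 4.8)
The plan is to induct on $m$, using Theorem \ref{hk_composition} to control all the algebraic nonlinearities that appear and the algebra property from Theorem \ref{hs_algebra} to handle products. The base case $m=0$ asks only that $\Phi$ be continuous: given $u_0 \in \h^k \cap \h^2[U]$, pick a compact interval $K$ with $\overline{u_0((-1,1))} \subset \mathring{K} \subset K \subset U$, and observe that by the $C^{0,1/2}_b$ embedding of $\h^2$ (Theorem \ref{hk_weight_improvement}) there is an $\h^2$-ball around $u_0$ all of whose members take values in $K$; on this ball the difference $f\circ u - f \circ u_0$ can be written via the fundamental theorem of calculus as $\int_0^1 f'(u_0 + t(u-u_0))\,dt \,\cdot (u - u_0)$, and Theorem \ref{hk_composition} (applied to $f'\in C^{k-1}\subseteq C^k$ after a harmless shrinking of $k$, or directly to $f' \in C^k$ if $f\in C^{k+1}$, which is guaranteed by $m\ge 0$ and the hypothesis $f \in C^{k+m+1}$) bounds the $\h^k$-norm of the integrand factor uniformly, while the algebra property turns the product into an $\h^k$-bound proportional to $\norm{u-u_0}_{\h^k}$. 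This proves local Lipschitz continuity, hence continuity.

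For the inductive step, suppose the statement is known for $m-1$ and all admissible $k$, and let $f \in C^{k+m+1}(U)$. The candidate derivative is the map $u \mapsto \big[(w_1,\dots,w_j)\mapsto D^j f(u)\prod_i w_i\big]$; since $D^j f \in C^{k+m+1-j}(U) \subseteq C^{k+1}(U)$ for $1\le j \le m$, Theorem \ref{hk_composition} shows $D^j f(u) \in \h^k$ with locally bounded norm, and the algebra property (Theorem \ref{hs_algebra}, valid since $k\ge 2$) shows $(w_1,\dots,w_j)\mapsto D^jf(u)\prod_i w_i$ is a bounded $j$-linear map $\L^j(\h^k;\h^k)$, depending on $u$ continuously by the $m=0$ case applied to $D^jf$. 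So the proposed formula defines a continuous map $\h^k\cap\h^2[U] \to \L^j(\h^k;\h^k)$. It remains to verify that $\Phi$ is actually Fréchet differentiable with $D\Phi(u)w = f'(u)w$; then the chain/Leibniz rule plus the inductive hypothesis applied to $f'\in C^{k+m}$ (valid since $f'\in C^{(k)+(m-1)+1}$) upgrades $\Phi$ from $C^0$ to $C^1$ and, bootstrapping, to $C^m$, with the formula \eqref{hk_comp_Cm_0} following by differentiating $u\mapsto f'(u)w$ and using $D(f'(u)) = f''(u)\,\cdot$ together with the Leibniz rule, then inducting on $j$.

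The Fréchet-differentiability verification is the crux: for $u$ in the open set and $w\in\h^k$ small, write
\begin{equation}
 f\circ(u+w) - f\circ u - f'(u)\,w = \left(\int_0^1 \big[f'(u+tw) - f'(u)\big]\,dt\right) w,
\end{equation}
and estimate the $\h^k$-norm of the right side by the algebra property as $\norm{w}_{\h^k}$ times the $\h^k$-norm of the integral factor. That factor equals $\int_0^1\!\!\int_0^1 t\, f''(u + stw)\,w \,ds\,dt$, so a further application of Theorem \ref{hk_composition} to $f''\in C^{k}$ (bounding $\norm{f''(u+stw)}_{\h^k}$ uniformly for $w$ in a fixed small ball, using that all these arguments still take values in the compact $K\subset U$) and one more use of the algebra property bound it by $C\norm{w}_{\h^k}$; hence the error is $O(\norm{w}_{\h^k}^2) = o(\norm{w}_{\h^k})$, which is Fréchet differentiability with the asserted derivative.

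\textbf{Main obstacle.} The genuinely delicate point is bookkeeping the loss of regularity: each differentiation of $\Phi$ costs one derivative of $f$ and introduces, via Fa\`a di Bruno / the Leibniz rule, products of the $D^j f(u)$ with lower-order derivatives of $u$, all of which must land back in the \emph{same} space $\h^k$. The hypothesis $f\in C^{k+m+1}$ is exactly calibrated so that after $m$ differentiations one still has $C^{k+1}$ regularity available — enough to run Theorem \ref{hk_composition} (which needs $C^k$) on the derivative appearing in the first-order remainder and on one more derivative for the differentiability estimate. Keeping the induction on $m$ decoupled from a fixed $k\ge 2$, and invoking Theorem \ref{hk_composition} and Theorem \ref{hs_algebra} as black boxes at each stage, is what makes the argument go through without circularity; the actual inequalities are then routine.
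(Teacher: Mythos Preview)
Your proposal is correct and follows essentially the same approach as the paper: localize to compact subsets of $U$ via the $\h^2 \hookrightarrow C^{0,1/2}_b$ embedding, use the integral Taylor remainder together with Theorem~\ref{hk_composition} and the algebra property (Theorem~\ref{hs_algebra}) to control differences, and induct. The only structural difference is that the paper fixes $m$ and inducts on the derivative order $\ell$ from $1$ to $m$ (directly verifying each $D^\ell\Phi$ via the second-order remainder and then checking continuity of $D^m\Phi$), whereas you induct on $m$ itself by observing that $D\Phi(u) = M_{f'(u)}$ with $M_a w = aw$ linear and applying the inductive hypothesis to $\Phi_{f'}$; this is a harmless reorganization that yields the same formula \eqref{hk_comp_Cm_0}.
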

\begin{proof}
We divide the proof into steps.

\emph{Step 1 -- Well-definedness and a reduction:}   Write $\mathcal{K}(U) = \{ K \subset U \st K \text{ is compact and } K^\circ \neq \varnothing\}$.  Given $K \in \mathcal{K}(U)$ let 
\begin{equation}
 \mathcal{E}_K = \{u \in \h^2 \st \overline{u((-1,1))} \subseteq K^\circ \},
\end{equation}
and note that since $\h^2 \hookrightarrow C^{0,1/2}_b((-1,1))$ we have that each   $\mathcal{E}_K \subseteq \h^2$ is open  and 
\begin{equation}
 \h^2[U] = \bigcup_{K \in \mathcal{K}(U)} \mathcal{E}_K.
\end{equation}
Moreover, for $K \in \mathcal{K}(U)$ we have that $u \in \mathcal{E}_K$ implies that $\overline{u((-1,1))} \subseteq K$, so Theorem \ref{hk_composition} implies that if $F\in C^k(U)$ then  $F\circ u \in \h^k$ for all $u \in \h^k \cap \mathcal{E}_K$.  From this we learn two key facts.  First, the map $\Phi : \h^k \cap \h^2[U] \to \h^k $ is indeed well-defined.  Second, it suffices to prove that the restriction $\Phi : \h^k \cap \mathcal{E}_K \to \h^k$ is $C^m$ and satisfies \eqref{hk_comp_Cm_0} (when $m \ge 1$) for each $K \in \mathcal{K}(U)$.

\emph{Step 2 -- Continuity:}  Fix $K \in \mathcal{K}(U)$.   We now claim that $\Phi : \h^k \cap \mathcal{E}_K \to \h^k$ is continuous.  Fix $u \in B(u,r) \subset \h^k \cap \mathcal{E}_K$.   For $v \in B(0,r) \backslash \{0\}$ we then compute 
\begin{equation}
\Phi(u+v) - \Phi(u) = f(u+v) - f(u) = \int_0^1 Df(u+tv)v dt. 
\end{equation}
Using Minkowski's integral inequality,  Theorem \ref{hk_composition},  and the fact that $\h^k$ is an algebra, we bound
\begin{multline}
 \norm{\Phi(u+v) - \Phi(u)}_{\h^k} \le \int_0^1 \norm{Df(u+tv) v}_{\h^k} dt \ls \norm{v}_{\h^k} \norm{f}_{C^{k+1}_b(K)} \int_0^1 (1+ \norm{u+tv}_{\h^k})^k dt \\
 \ls  \norm{v}_{\h^k} \norm{f}_{C^{k+1}_b(K)} (1+ \norm{u}_{\h^k} + r)^k,
\end{multline}
from which we readily deduce that $\Phi$ is continuous at $u$.   This completes the proof of the claim and the theorem as well when $m=0$.  It remains only to handle the case when $m \ge 1$.

\emph{Step 3 -- Differentiability:}  Suppose that $m\ge 1$ and fix $K \in \mathcal{K}(U)$.   We saw in Step 1 that $f(u) \in \h^k$ when $u \in \h^k \cap \mathcal{E}_K$, but since $f \in C^{k+m+1}(U)$ the same reasoning shows that $D^j f(u) \in \h^k$ when $1 \le j \le m+1$ and $u \in \h^k \cap \mathcal{E}_K$.  Moreover, since $\h^k$ is an algebra, the map $\h^k \ni w \mapsto D^j f(u) w \in \h^k$ is also bounded and linear, and Theorem \ref{hk_composition} provides the estimate
\begin{equation}\label{hk_comp_Cm_1}
\max_{0\le j \le m+1} \norm{D^j f(u) w}_{\h^k} \ls \norm{f}_{C^{k+m+1}_b(K)}(1+\norm{u}_{\h^k})^k \norm{w}_{\h^k} \text{ for } u\in \h^k \cap \mathcal{E}_K \text{ and } w \in \h^k.
\end{equation}
In particular, this shows that the proposed formulas for the derivatives of $\Phi$ in \eqref{hk_comp_Cm_0} are all well-defined.

We now aim to inductively prove that $\Phi : \h^k \cap \mathcal{E}_K \to \h^k$ is $m-$times differentiable. To this end fix $u \in B(u,r) \subseteq \mathcal{E}_K \cap \h^k$.   Note that \eqref{hk_comp_Cm_1} implies that the map $\h^k \ni v \mapsto Df(u) v \in \h^k$ is bounded and linear. For $v \in B(0,r) \backslash \{0\}$ we then compute 
\begin{multline}
\Phi(u+v) - \Phi(u) - Df(u) v = f(u+v) - f(u) - Df(u) v = \int_0^1 [Df(u+tv) - Df(u)]v dt \\
= \int_0^1 \int_0^1 t D^2f(u+stv) v^2 ds dt. 
\end{multline}
Again using Theorem \ref{hk_composition} and the fact that $\h^k$ is an algebra, we then have that 
\begin{multline}
\norm{\Phi(u+v) - \Phi(u) - Df(u) v}_{\h^k} \le \int_0^1 \int_0^1 t \norm{D^2f(u+stv) v^2}_{\h^k} ds dt \\
\ls \norm{v}_{\h^k}^2 \int_0^1 \int_0^1 t \norm{f}_{C^{k+2}_b(K)}(1+ \norm{u+st v}_{\h^k} )^k dsdt 
\ls  \norm{v}_{\h^k}^2  \norm{f}_{C^{k+2}_b(K)} (1+ \norm{u}_{\h^k} + r)^k.
\end{multline}
Hence, 
\begin{equation}
 \lim_{v \to 0} \frac{\norm{\Phi(u+v) - \Phi(u) - Df(u) v}_{\h^k}}{\norm{v}_{\h^k}} =0
\end{equation}
and we deduce that $\Phi$ is differentiable in $\mathcal{E}_K \cap \h^k$ with $D\Phi(u) \in \L(\h^k;\h^k)$ given by $D\Phi(u)v = Df(u) v$.  

Next, suppose that we have proved $\Phi$ is $\ell$ times differentiable for $1\le \ell \le m-1$ with derivatives given by \eqref{hk_comp_Cm_0} for $1\le j \le \ell$.    Again fix $u \in B(u,r) \subseteq \mathcal{E}_K \cap \h^k$.  For $v \in B(0,r) \backslash \{0\}$ we then compute 
\begin{multline}
(D^\ell \Phi(u+v)  - D^\ell\Phi(u) - D^{\ell+1} f(u)v) (w_1,\dotsc,w_\ell) = (D^\ell f(u+v) -D^\ell f(u) - D^{\ell+1}f(u)v) \prod_{i=1}^\ell w_i  \\
= \int_0^1 (D^{\ell+1} f(u+tv) - D^{\ell+1}f(u))v \prod_{i=1}^\ell w_i dt = \int_0^1 \int_0^1 t D^{\ell+2} f(u+stv) v^2 \prod_{i=1}^\ell w_i ds dt. 
\end{multline}
Again using Theorem \ref{hk_composition} and the fact that $\h^k$ is an algebra, we then have that 
\begin{multline}
\norm{(D^\ell \Phi(u+v)  - D^\ell\Phi(u) - D^{\ell+1} f(u)v) (w_1,\dotsc,w_\ell)}_{\h^k} \le \int_0^1 \int_0^1 t \norm{D^{\ell+2} f(u+stv) v^2 \prod_{i=1}^\ell w_i}_{\h^k} ds dt \\
\ls \norm{v}_{\h^k}^2 \prod_{i=1}^\ell \norm{w_i}_{\h^k} \int_0^1 \int_0^1 t \norm{f}_{C^{k+\ell+2}_b(K)}(1+ \norm{u+st v}_{\h^k} )^k dsdt  \\
\ls \norm{v}_{\h^k}^2 \prod_{i=1}^\ell \norm{w_i}_{\h^k}  \norm{f}_{C^{k+\ell+2}_b(K)} (1+ \norm{u}_{\h^k} + r)^k.
\end{multline}
Thus, 
\begin{equation}
 \norm{D^\ell \Phi(u+v)  - D^\ell\Phi(u) - D^{\ell+1} f(u)v }_{\L^\ell } \ls \norm{v}_{\h^k}^2   \norm{f}_{C^{k+\ell+2}_b(K)} (1+ \norm{u}_{\h^k} + r)^k,
\end{equation}
and we deduce from this that $D^\ell \Phi$ is differentiable at $u$ with 
\begin{equation}
D^{\ell+1} \Phi(u)(w_1,\dotsc,w_{\ell+1}) = D^{\ell+1}f(u) \prod_{i=1}^{\ell+1} w_i. 
\end{equation}
Thus, $\Phi$ is $(\ell+1)-$times differentiable and \eqref{hk_comp_Cm_0} holds for $1\le j \le \ell+1$.

Proceeding by finite induction, we deduce that $\Phi$ is $m-$times differentiable in $\h^k \cap \mathcal{E}_K$, so to show that it is $C^m$ it remains only to prove that $D^m\Phi$ is continuous.  Once more fix $u \in B(u,r) \subseteq \mathcal{E}_K \cap \h^k$.  For $v \in B(0,r) \backslash \{0\}$ we compute 
\begin{equation}
(D^m \Phi(u+v)  - D^m\Phi(u)) (w_1,\dotsc,w_m) = (D^m f(u+v) -D^m f(u)) \prod_{i=1}^m w_i  
= \int_0^1 D^{m+1} f(u+tv)v \prod_{i=1}^m w_i dt 
\end{equation}
and then estimate 
\begin{equation}
 \norm{(D^m \Phi(u+v)  - D^m\Phi(u)) (w_1,\dotsc,w_m)}_{\h^k} \ls \norm{v} \prod_{i=1}^m \norm{w_i}_{\h^k} \norm{f}_{C^{k+m+1}_b(K)} (1+ \norm{u}_{\h^k} + r)^k.
\end{equation}
Thus, 
\begin{equation}
  \norm{D^m \Phi(u+v)  - D^m\Phi(u) }_{\L^m} \ls \norm{v} \norm{f}_{C^{k+m+1}_b(K)} (1+ \norm{u}_{\h^k} + r)^k
\end{equation}
and we deduce that $D^m \Phi$ is continuous at $u$.  Hence, $\Phi : \h^k \cap \mathcal{E}_K \to \h^k$ is indeed $C^m$ for all $K \in \mathcal{K}(U)$.

\end{proof}

%%%%%%%%%%%%%%%%%%%%%%%%%%%%%%%%%%%%%%%%%%%%%%%%%
%%%%%%%%%%%%%%%%%%%%%%%%%%%%%%%%%%%%%%%%%%%%%%%%%
%%%%%%%%%%%%%%%%%%%%%%%%%%%%%%%%%%%%%%%%%%%%%%%%%
\section{Bubble construction }
%%%%%%%%%%%%%%%%%%%%%%%%%%%%%%%%%%%%%%%%%%%%%%%%% 
%%%%%%%%%%%%%%%%%%%%%%%%%%%%%%%%%%%%%%%%%%%%%%%%%
%%%%%%%%%%%%%%%%%%%%%%%%%%%%%%%%%%%%%%%%%%%%%%%%% 
 
In this section we carry out the proof of Theorem \ref{main_thm}.  First, we reformulate the third equation in \eqref{hydrostatic_eqns} in a form amenable to study in the weighted Sobolev functional framework developed in the previous section.  Then we study the smoothness of certain maps and employ a bifurcation argument to prove the theorem.

%%%%%%%%%%%%%%%%%%%%%%%%%%%%%%%%%%%%%%%%%%%%%%%%%
%%%%%%%%%%%%%%%%%%%%%%%%%%%%%%%%%%%%%%%%%%%%%%%%%
\subsection{Reformulation}
%%%%%%%%%%%%%%%%%%%%%%%%%%%%%%%%%%%%%%%%%%%%%%%%% 
%%%%%%%%%%%%%%%%%%%%%%%%%%%%%%%%%%%%%%%%%%%%%%%%%
 
We aim to solve \eqref{hydrostatic_eqns} with hydrostatic bubble domains of the form $\Omega_{\m{int}}=\Lambda(\mathbb{B}^3)$, where $\mathbb{B}^3 = B(0,1) \subset \R^3$ is the usual open unit ball and $\Lambda: \overline{\mathbb B^3} \to \R^3 $ %$\Lambda : \R^3 \to \R^3$ 
is a smooth diffeomorphism onto its image of the form $\Lambda(x) = \lambda(x_3) x$ for a smooth function $\lambda : [-1,1] \to \R_+$. %$\lambda : \R \to \R_+$.  
This means that the bubble's unknown surface is $\Sigma = \Lambda(\mathbb{S}^2)$, the image of the unit sphere $\mathbb{S}^2 = \p \mathbb{B}^3$. Note that the spherical bubbles constructed in Section \ref{subsec:1.2} without gravity are realized by taking $\lambda = R_\alpha$ where $R_\alpha$ is given in \eqref{Ralpha_def}. In this case $\Lambda$ is a simple dilation map and the bubble surface is the image of the unit sphere, the sphere with radius $R_\alpha$.  We next give a sufficient condition on $\lambda$ for $\Lambda$ to be an injection in a slab domain $\R^{n-1} \times [-1,1]$, which contains the closed unit ball $\overline{\mathbb B^3}$.

\begin{lem}\label{lem:injective}
Suppose that $\lambda : [-1,1] \to \R_+$ is continuous and such that $[-1,1] \ni t \mapsto t \lambda(t) \in \R$ is increasing.  Then the map $\Lambda : \R^{n-1} \times [-1,1] \to \R^n$ given by $\Lambda(x) = \lambda(x_n) x$ is injective.
\end{lem}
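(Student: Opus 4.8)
The plan is to suppose $\Lambda(x) = \Lambda(y)$ for $x, y \in \R^{n-1} \times [-1,1]$ and deduce $x = y$. Writing $x = (x', x_n)$ and $y = (y', y_n)$, the equation $\lambda(x_n) x = \lambda(y_n) y$ splits into the vector equation $\lambda(x_n) x' = \lambda(y_n) y'$ on the first $n-1$ coordinates and the scalar equation $\lambda(x_n) x_n = \lambda(y_n) y_n$ on the last coordinate. First I would exploit the scalar equation: by hypothesis the map $t \mapsto t\lambda(t)$ is (strictly) increasing on $[-1,1]$, hence injective, so $x_n \lambda(x_n) = y_n \lambda(y_n)$ forces $x_n = y_n$.

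Once $x_n = y_n$, the common value $\lambda(x_n) = \lambda(y_n)$ is a strictly positive real number since $\lambda : [-1,1] \to \R_+$. Dividing the remaining equation $\lambda(x_n) x' = \lambda(y_n) y' = \lambda(x_n) y'$ by this positive scalar yields $x' = y'$, and combined with $x_n = y_n$ this gives $x = y$, establishing injectivity.

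I do not anticipate a genuine obstacle here: the only subtlety is making sure the hypothesis "$t \mapsto t\lambda(t)$ is increasing" is being used in the strict sense needed for injectivity of that scalar map — if "increasing" is meant non-strictly one would need a separate argument, but in context (and since $\lambda > 0$, so $t\lambda(t)$ has the same sign as $t$ and in particular separates $t = 0$ from $t \neq 0$) the monotonicity combined with positivity of $\lambda$ suffices. The argument is short and purely algebraic once this monotonicity observation is in hand; no weighted-space machinery or regularity beyond continuity of $\lambda$ is required.
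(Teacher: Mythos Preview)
Your argument is correct and is essentially the same as the paper's: the paper phrases it contrapositively (assume $x\neq y$ and split into the cases $x_n\neq y_n$ and $x_n=y_n$, $x'\neq y'$), but the two key observations---strict monotonicity of $t\mapsto t\lambda(t)$ for the last coordinate and positivity of $\lambda$ for the first $n-1$ coordinates---are identical. Your remark about ``increasing'' needing to be strict is apt; the paper uses it in the strict sense without comment.
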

\begin{proof}
Write $E = \R^{n-1} \times [-1,1]$ and denote a point $x \in E$ as $x = (x',x_n)$.  Suppose that $x,y \in E$ with $x \neq y$.  We break to two cases.  In the first $x_n \neq y_n$. Then the increasing assumption implies that $x_n \lambda(x_n) \neq y_n \lambda(y_n)$, and hence  $\Lambda(x) \neq \Lambda(y)$.  In the second $x_n = y_n$, so $x' \neq y'$.  Then $\Lambda'(x) = \lambda(x_n) x' = \lambda(y_n) x' \neq \lambda(y_n) y' = \Lambda'(y)$ and hence  $\Lambda(x) \neq \Lambda(y)$.
Thus, in any case $\Lambda(x) \neq \Lambda(y)$, and we deduce that $\Lambda$ is injective.
\end{proof}

Under the assumptions on $\lambda$ in Lemma \ref{lem:injective}, the restriction of $\Lambda$ to $\overline{\mathbb B^3}$ is a homeomorphism onto its image. Furthermore, if $\lambda$ is smooth  and sufficiently close to a constant in the $C^1_b$ norm, then $\Lambda: \overline{\mathbb B^3} \to \Lambda(\overline{\mathbb{B}^3})$ is a smooth diffeomorphism onto its image.  Our goal is thus to verify these conditions in proving Theorem \ref{main_thm}.

We now turn to a discussion of the differential equation \eqref{hydrostatic_eqns}. %mentioned above.   
Suppose that $\Sigma = \Lambda(\mathbb{S}^2)$ for $\Lambda : \overline{\mathbb{B}^3} \to \R^3$ a diffeomorphism onto its image given by  $\Lambda(x) = \lambda(x_3) x$  with $\lambda : [-1,1] \to \R_+$   a $C^2$ map.    Then for $x= (x_1,x_2,\zeta) \in \mathbb{S}^2$ the total curvature at $y=\Lambda(x) \in \Sigma$ is (writing $' = D = d/d\zeta$)
\begin{multline}
 \mathscr{K}_{\Sigma}(y) =    \frac{ \zeta \lambda'(\zeta) + \lambda(\zeta)}{\lambda(\zeta)  \sqrt{  (\lambda(\zeta))^2 + (\lambda'(\zeta))^2 (1-\zeta^2)  }}  \\
+ \frac{- (1-\zeta^2) \lambda(\zeta) \lambda''(\zeta) + 2(1-\zeta^2) (\lambda'(\zeta))^2 + \zeta \lambda(\zeta) \lambda(\zeta)' + (\lambda(\zeta))^2}{(  (\lambda(\zeta))^2 + (\lambda'(\zeta))^2 (1-\zeta^2)  )^{3/2}},
\end{multline}
as can be computed by introducing the cylindrical parameterization 
\begin{equation}
 (0,2\pi) \times (-1,1) \ni (\theta,\zeta) \mapsto  (\cos(\theta) \sqrt{1-\zeta^2}, \sin(\theta) \sqrt{1-\zeta^2}, \zeta) \in \mathbb{S}^2
\end{equation}
and computing with standard formulas for the second fundamental form (see, for instance, Chapter 2 of \cite{Spivak_1979}).  Thus, the third equation of \eqref{hydrostatic_eqns} can be reformulated as the differential equation (writing $\lambda = \lambda(\zeta)$ for brevity):  
\begin{multline}
 0 = \sigma \left[ \frac{\zeta \lambda' + \lambda}{\lambda  \sqrt{  \lambda^2 + \lambda'^2 (1-\zeta^2)  }}  
+ \frac{- (1-\zeta^2) \lambda \lambda'' + 2(1-\zeta^2) \lambda'^2 + \zeta \lambda \lambda' + \lambda^2}{(  \lambda^2 + \lambda'^2 (1-\zeta^2)  )^{3/2}} \right] \\
+ (\pext^\ast- \rho_{\m{ext}} g \lambda \zeta) 
- \pfint(\alpha-g \lambda \zeta), 
\end{multline}
or equivalently (multiplying by the denominators and performing some simple algebra)
\begin{multline}\label{curvature_equation_lambda}
0 =     -  \sigma [ ( (1-\zeta^2) \lambda')' - 2\lambda    ] 
+\sigma \lambda^{-2}  \left[    \zeta (1-\zeta^2) (\lambda')^3 + 3(1-\zeta^2) \lambda(\lambda')^2     \right] \\
+ \lambda^{-1}(  \lambda^2 + \lambda'^2 (1-\zeta^2)  )^{3/2} \left[ (\pext^\ast- \rho_{\m{ext}} g \lambda \zeta) -\pfint (\alpha-g \lambda \zeta)   
\right] 
\end{multline}
for $\zeta \in [-1,1]$
 
Synthesizing the above, we arrive at our strategy for proving Theorem \ref{main_thm}.  We will construct $\lambda \in C^\infty([-1,1])$ solving \eqref{curvature_equation_lambda} and of the form $\lambda = R + \varphi$ for $R \in \R_+$ a constant and $\varphi \in C^\infty([-1,1])$ with $\norm{\varphi}_{C^1_b}$ small enough that the induced map  $\Lambda: \overline{\mathbb B^3} \to \Lambda(\overline{\mathbb{B}^3})$  is a smooth diffeomorphism.  We will then define $\Sigma$ and $\Omega_{\m{int}}$ as stated in terms of $\Lambda$.

%%%%%%%%%%%%%%%%%%%%%%%%%%%%%%%%%%%%%%%%%%%%%%%%%
%%%%%%%%%%%%%%%%%%%%%%%%%%%%%%%%%%%%%%%%%%%%%%%%%
\subsection{Proof of main result}
%%%%%%%%%%%%%%%%%%%%%%%%%%%%%%%%%%%%%%%%%%%%%%%%% 
%%%%%%%%%%%%%%%%%%%%%%%%%%%%%%%%%%%%%%%%%%%%%%%%%

We will solve \eqref{curvature_equation_lambda} by way of a bifurcation argument in $\h^k$ for appropriate $k \in \N$.  To this end, we must first establish that various maps are well-defined and smooth.  We begin this task with a quartet of lemmas.

\begin{lem}\label{map_lemma_1}
Let $\mathcal{W}_0 = \h^2[\R_+]$ be as defined in Theorem \ref{hk_comp_Cm}, and let $s \in \R$.  Then the map 
\begin{equation}
 \h^k \cap \mathcal{W}_0 \ni w \mapsto w^{s} \in \h^k 
\end{equation}
is smooth  for all $2 \le k \in \N$.  Moreover, $\lambda \in \mathcal{W}_0$ for every constant function $\lambda >0$.  
\end{lem}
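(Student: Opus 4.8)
\textbf{Proof plan for Lemma \ref{map_lemma_1}.}

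The plan is to realize the map $w \mapsto w^s$ as a composition map of the type handled by Theorem \ref{hk_comp_Cm}, applied to a suitable scalar function. Fix $s \in \R$ and set $U = \R_+ = (0,\infty)$, which is open, and let $f : U \to \R$ be $f(z) = z^s$. Then $f \in C^\infty(U)$, so in particular $f \in C^{k+m+1}(U)$ for every $k,m \in \N$. With the notation of Theorem \ref{hk_comp_Cm}, the set $\h^2[U] = \{ u \in \h^2 \st \overline{u((-1,1))} \subset U\}$ is precisely $\mathcal{W}_0 = \h^2[\R_+]$, and the theorem immediately yields that $\Phi(w) = f \circ w = w^s$ defines a well-defined map $\h^k \cap \mathcal{W}_0 \to \h^k$ that is $C^m$. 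Since $m \in \N$ is arbitrary, $\Phi$ is $C^m$ for every $m$, i.e.\ smooth, for each $2 \le k \in \N$. This is the entire content of the first assertion, so the first step is simply to check that the hypotheses of Theorem \ref{hk_comp_Cm} are met and invoke it; there is essentially no obstacle here beyond bookkeeping.

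The one genuine point requiring care is verifying the membership claim that every positive constant function $\lambda \equiv c > 0$ lies in $\mathcal{W}_0$, since $\mathcal{W}_0$ is defined using $\h^2$ and one must confirm $\lambda \in \h^2$ in the first place. Here I would note that constants are polynomials, so $\lambda \in \mathcal{P} \subset \h^k$ for all $k$ by the second item of Theorem \ref{hk_basics} (or directly: the $\h^2$ norm of a constant is finite since the weights $(1-\zeta^2)^j$ are integrable on $(-1,1)$ and the derivatives vanish). Then $\lambda((-1,1)) = \{c\}$, so $\overline{\lambda((-1,1))} = \{c\} \subset (0,\infty) = \R_+$, which is exactly the condition defining $\h^2[\R_+]$. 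Hence $\lambda \in \mathcal{W}_0$.

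I do not anticipate a hard step: the lemma is a direct corollary of Theorem \ref{hk_comp_Cm} once one identifies $f(z) = z^s$ as a smooth function on the open set $\R_+$, and the constant-function claim is immediate from density of polynomials (or a trivial norm computation) together with the definition of $\h^2[\R_+]$. The only thing to be mildly attentive to is the notational identification $\mathcal{W}_0 = \h^2[\R_+]$ and the fact that ``smooth'' here means $C^m$ for all $m$, which follows because $m$ was arbitrary in the cited theorem.
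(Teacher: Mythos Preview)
Your proposal is correct and matches the paper's own proof, which is a one-line invocation of Theorem \ref{hk_comp_Cm} using the smoothness of $z \mapsto z^s$ on $\R_+$. Your additional verification that positive constants lie in $\mathcal{W}_0$ (via $\mathcal{P} \subset \h^2$ and the definition of $\h^2[\R_+]$) is a harmless elaboration of a point the paper leaves implicit.
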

\begin{proof}
 This is an immediate consequence of Theorem \ref{hk_comp_Cm} since $\R_+ \ni z \mapsto z^s \in \R_+$ is smooth.
\end{proof}

\begin{lem}\label{map_lemma_2}
Suppose that $2 \le k \in \N$.  Then the following hold.
\begin{enumerate}
 \item Let $p : \R^3 \to \R$ be a polynomial.  Then the map 
\begin{equation}
 \h^{k+1} \ni u \mapsto   p(\zeta,u,Du)  \in \h^k
\end{equation}
is well-defined and smooth.  
%  \item The map
% \begin{equation}
%  \h^{k+1} \ni u \mapsto   u^2+ (1-\zeta^2)(Du)^2 \in \h^k
% \end{equation}
% is well-defined and smooth.  
 \item The pre-image set $\mathcal{W}_1 \subseteq \h^{3}$ defined via
\begin{equation}
 u \in \mathcal{W}_1  \Leftrightarrow  u^2+ (1-\zeta^2)(Du)^2 \in \mathcal{W}_0,
\end{equation}
for $\mathcal{W}_0 \subseteq \h^2$ the open set from Lemma \ref{map_lemma_1}, is well-defined and open.  Moreover,  $\lambda  \in \mathcal{W}_1$ for every constant function $\lambda \in \R$, and  the map 
\begin{equation}
 \mathcal{W}_1 \cap \h^{k+1} \ni u \mapsto (u^2+ (1-\zeta^2)(Du)^2)^{3/2} \in \h^k
\end{equation}
is well-defined and smooth.
\end{enumerate}
\end{lem}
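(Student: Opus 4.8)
The proof reduces to three ingredients already available: $\h^k$ is a Banach algebra for $2 \le k \in \N$ (Theorem \ref{hs_algebra}), so the $N$-fold multiplication map $(\h^k)^N \to \h^k$ is bounded multilinear and hence $C^\infty$; polynomials in $\zeta$ are fixed elements of every $\h^k$ (Theorem \ref{hk_basics}); and differentiation $D : \h^{k+1} \to \h^k$ is a bounded linear operator for every $1 \le k \in \N$. I would prove the last of these first, as it is the only one not recorded above. For $u \in \h^{k+1}$ we have $\norm{Du}_{\h^k}^2 = \sum_{i=0}^{k}\int_{-1}^1 (1-\zeta^2)^i \abs{D^{i+1}u}^2 \,d\zeta$; the $i=k$ term is at most $\norm{u}_{\h^{k+1}}^2$ directly, while for $0 \le i \le k-1$, setting $j = i+1 \in \{1,\dots,k\}$, one has $\int_{-1}^1(1-\zeta^2)^{i}\abs{D^{i+1}u}^2 = \norm{D^j u}_{H^0_{j-1}}^2$, and since $\max\{-1,2j-(k+1)\} \le j-1$ for $1 \le j \le k$, the inclusion $H^0_{\max\{-1,2j-(k+1)\}}((-1,1)) \hookrightarrow H^0_{j-1}((-1,1))$ from the first part of Theorem \ref{weighted_space_properties} together with the second part of Theorem \ref{hk_weight_improvement} (applied with $k+1$ in place of $k$) bounds it by $\ls \norm{u}_{\h^{k+1}}^2$. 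Summing yields $\norm{Du}_{\h^k} \ls \norm{u}_{\h^{k+1}}$, so $D$ is bounded, hence smooth.

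For item (1), expand $p(\zeta,a,b) = \sum_{\alpha,\beta,\gamma} c_{\alpha\beta\gamma}\zeta^\alpha a^\beta b^\gamma$, so that $p(\zeta,u,Du) = \sum_{\alpha,\beta,\gamma} c_{\alpha\beta\gamma}\zeta^\alpha u^\beta (Du)^\gamma$. Each factor lies in $\h^k$: $\zeta^\alpha$ by Theorem \ref{hk_basics}, $u$ via the continuous inclusion $\h^{k+1}\hookrightarrow\h^k$, and $Du$ by the previous paragraph; the algebra property then puts each term, and the finite sum, in $\h^k$, giving well-definedness. For smoothness, the maps $u \mapsto \zeta^\alpha$ (constant), $u\mapsto u$, and $u\mapsto Du$ are all smooth $\h^{k+1}\to\h^k$, hence so is $u \mapsto (\zeta^\alpha,u,\dots,u,Du,\dots,Du) \in (\h^k)^{1+\beta+\gamma}$; composing with the smooth $(1+\beta+\gamma)$-fold multiplication map and summing over the finitely many monomials shows $u \mapsto p(\zeta,u,Du)$ is smooth.

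For item (2), apply item (1) with $p(\zeta,a,b) = a^2 + (1-\zeta^2)b^2$ to conclude that $\Psi(u) := u^2 + (1-\zeta^2)(Du)^2$ defines a smooth map $\h^{k+1}\to\h^k$ for every $2 \le k \in \N$, in particular a continuous map $\h^3\to\h^2$. Since $\mathcal{W}_0\subseteq\h^2$ is open (Theorem \ref{hk_comp_Cm}), $\mathcal{W}_1 = \Psi^{-1}(\mathcal{W}_0)$ is a well-defined open subset of $\h^3$; and for a constant function $\lambda$ we have $\Psi(\lambda)=\lambda^2$, whose range has closure $\{\lambda^2\}$, so $\lambda \in \mathcal{W}_1$ whenever $\lambda \neq 0$. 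Now fix $2 \le k \in \N$. Continuity of $\h^k\hookrightarrow\h^2$ makes $\mathcal{W}_0\cap\h^k$ open in $\h^k$, so $\mathcal{W}_1\cap\h^{k+1} = \{u\in\h^{k+1}: \Psi(u)\in\mathcal{W}_0\} = \Psi^{-1}(\mathcal{W}_0\cap\h^k)$ is open in $\h^{k+1}$ and $\Psi$ restricts to a smooth map $\mathcal{W}_1\cap\h^{k+1}\to\h^k\cap\mathcal{W}_0$. Composing with the smooth map $\h^k\cap\mathcal{W}_0 \ni w \mapsto w^{3/2}\in\h^k$ furnished by Lemma \ref{map_lemma_1} (taking $s=3/2$) shows that $u \mapsto (u^2+(1-\zeta^2)(Du)^2)^{3/2}$ is well-defined and smooth on $\mathcal{W}_1\cap\h^{k+1}$.

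The only delicate points are the exponent bookkeeping in the boundedness of $D$ — checking $\max\{-1,2j-(k+1)\} \le j-1$ so that the improved-weight estimate of Theorem \ref{hk_weight_improvement} is applicable — and the matching of codomains in item (2): the definition of $\mathcal{W}_1$ a priori only records $\Psi(u)\in\h^2$, so one must note that for $u\in\h^{k+1}$ the element $\Psi(u)$ in fact lies in $\h^k\cap\mathcal{W}_0$, which is precisely what allows the power map of Lemma \ref{map_lemma_1} to be applied in $\h^k$ rather than merely in $\h^2$. Beyond these, the argument is a routine application of the algebra structure of $\h^k$ and the smoothness of bounded multilinear maps between Banach spaces.
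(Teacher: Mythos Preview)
Your proof is correct and follows essentially the same route as the paper: use the algebra property of $\h^k$, the inclusion $\mathcal{P}\subset\h^k$, and the boundedness of $D:\h^{k+1}\to\h^k$ to handle item (1), then compose with Lemma \ref{map_lemma_1} for item (2). The only substantive difference is that the paper invokes Proposition \ref{hkhm_bound_spec} for $Du\in\h^k$, whereas you prove this directly from Theorems \ref{weighted_space_properties} and \ref{hk_weight_improvement}; your route is in fact cleaner, since Proposition \ref{hkhm_bound_spec} with $j=1$, $\mu=0$ requires $2+m\le k$ and so does not literally yield $Du\in\h^{k-1}$ from $u\in\h^k$. You also correctly flag that the constant $\lambda=0$ fails to lie in $\mathcal{W}_1$, a harmless imprecision in the statement.
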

\begin{proof}
We know from Proposition \ref{hkhm_bound_spec} that  if $u \in \h^{m+1}$ for $m \in \N$, then $u, Du \in \h^m$, and we also know that $\mathcal{P} \subseteq \h^m$.  Combining these observations with Theorem \ref{hs_algebra}, we deduce that the first item holds.  The first item then shows that the map 
\begin{equation}
 \h^{k+1} \ni u \mapsto   u^2+ (1-\zeta^2)(Du)^2 \in \h^k
\end{equation}
is well-defined and smooth, which then shows that $\mathcal{W}_1 \subseteq \h^3$ is a well-defined open set containing all constants functions.  The second assertion of the second item then follows by composing the previous map  with the map from Lemma \ref{map_lemma_1} when $s =3/2$.
\end{proof}

\begin{lem}\label{map_lemma_3}
Suppose that $2 \le k \in \N$.  Then the map 
\begin{equation}
 \R \times \h^{k} \ni (g,u) \mapsto \pext^\ast - \rho_{\m{ext}} g \zeta u \in \h^{k}
\end{equation}
is well-defined and smooth.
\end{lem}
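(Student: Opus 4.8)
The plan is to recognize that the map in the statement is, up to an additive constant, a bounded bilinear map between Banach spaces, and that such maps are automatically $C^\infty$.

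First I would record that both $\pext^\ast$ (a constant function) and $\zeta$ (a degree-one polynomial) belong to $\mathcal{P} \subseteq \h^k$, using the density of polynomials from Theorem \ref{hk_basics}; in particular each has finite $\h^k$ norm. Next, since $k \ge 2$, Theorem \ref{hs_algebra} shows that $\h^k$ is a Banach algebra, so multiplication is a continuous bilinear map $\h^k \times \h^k \to \h^k$; fixing the first factor equal to $\zeta$ shows that $u \mapsto \zeta u$ is a bounded linear endomorphism of $\h^k$ with $\norm{\zeta u}_{\h^k} \ls \norm{u}_{\h^k}$. Consequently the map
\begin{equation}
 B : \R \times \h^k \to \h^k, \qquad B(g,u) = -\rho_{\m{ext}}\, g\, \zeta u
\end{equation}
is bilinear, and the algebra estimate gives $\norm{B(g,u)}_{\h^k} \ls \abs{g}\, \norm{u}_{\h^k}$, so $B$ is a bounded bilinear map from the Banach space $\R \times \h^k$, equipped with the product norm, into $\h^k$. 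A bounded bilinear map between Banach spaces is smooth: it coincides with its own second-order Taylor polynomial, with $DB(g,u)(h,w) = B(h,u) + B(g,w)$ bounded linear in $(h,w)$ and continuous in $(g,u)$, with $D^2 B$ the constant bounded symmetric bilinear map $\big((h_1,w_1),(h_2,w_2)\big) \mapsto B(h_1,w_2) + B(h_2,w_1)$, and with $D^j B = 0$ for $j \ge 3$. Finally, adding the fixed element $\pext^\ast \in \h^k$ is a translation, which preserves smoothness; hence $(g,u) \mapsto \pext^\ast + B(g,u) = \pext^\ast - \rho_{\m{ext}} g \zeta u$ is well-defined and $C^\infty$ as a map into $\h^k$.

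There is essentially no obstacle here; the only points requiring care are the bookkeeping that multiplication by the fixed polynomial $\zeta$ is bounded on $\h^k$ — immediate from Theorem \ref{hs_algebra} — and the standard fact that bounded multilinear maps between Banach spaces are $C^\infty$. One could alternatively route the argument through Theorem \ref{hk_comp_Cm} and the algebra property, but the direct bilinear argument is the cleanest presentation.
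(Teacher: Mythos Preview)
Your proof is correct and follows essentially the same route as the paper: both arguments rest on the facts that polynomials lie in $\h^k$ and that $\h^k$ is an algebra for $k\ge 2$ (Theorem~\ref{hs_algebra}), from which smoothness is immediate. The paper phrases this by invoking the first item of Lemma~\ref{map_lemma_2} together with the smooth embedding $\R\ni t\mapsto t\in\h^k$, whereas you cut straight to ``constant plus bounded bilinear,'' which is a slightly more self-contained presentation of the same idea.
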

\begin{proof}
This follows from the first item of Lemma \ref{map_lemma_2} and the observation that the map $\R \ni t \mapsto t  \in \h^k$ (in the latter $t\in \h^k$ is the constant function with value $t$) is smooth.
\end{proof}

\begin{lem}\label{map_lemma_4}
Let $2 \le k \in \N$.  Then the following hold.
\begin{enumerate}
 \item  Let $\mathcal{W}_2 = \h^2[(\hint^{\m{min}}, \hint^{\m{max}})]$ be as defined by Theorem \ref{hk_comp_Cm}.  Then the map 
\begin{equation}
 \h^k \cap  \mathcal{W}_2 \ni w \mapsto \pfint(w) \in \h^k
\end{equation}
is well-defined and smooth.
 \item The map $\Theta : (\hint^{\m{min}}, \hint^{\m{max}}) \times \R \times \h^2 \to \h^2$ given by $\Theta(\alpha,g,u) = \alpha - g \zeta u$ is well-defined and smooth.  In particular, the pre-image open set $\mathcal{O} = \Theta^{-1}(\mathcal{W}_2) \subseteq (\hint^{\m{min}}, \hint^{\m{max}}) \times \R \times \h^2$ is well-defined and open.
  \item Define the open subset $\mathcal{O}^k \subseteq (\hint^{\m{min}}, \hint^{\m{max}}) \times \R \times \h^k$ via $\mathcal{O}^k = [(\hint^{\m{min}}, \hint^{\m{max}}) \times \R \times \h^k] \cap \mathcal{O}$.  Then 
\begin{equation}
 (\hint^{\m{min}}, \hint^{\m{max}}) \times \{0\} \times \h^{k} \subseteq \mathcal{O}^k,
\end{equation}
and the map 
\begin{equation}
 \mathcal{O}^k \ni (\alpha,g,u) \mapsto \pfint(\alpha - g\zeta u) \in \h^k 
\end{equation}
is well-defined and smooth.
\end{enumerate}
\end{lem}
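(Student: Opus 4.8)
\emph{Approach.} All three items are routine consequences of the functional-analytic machinery built in Section 2; no new estimate is required. The plan is to recognize $\pfint$ as a fixed smooth function on the open interval $(\hint^{\m{min}}, \hint^{\m{max}})$, recognize $\Theta$ as a polynomial map between Banach spaces, and then compose. For Item 1 I would simply invoke Theorem \ref{hk_comp_Cm} with $U = (\hint^{\m{min}}, \hint^{\m{max}})$, $f = \pfint = \pint \circ \hint^{-1} \in C^\infty(U)$, and $m \in \N$ arbitrary: the theorem yields a well-defined $C^m$ map $\h^k \cap \h^2[U] \to \h^k$, $w \mapsto \pfint \circ w$, for every $m$, hence a smooth one.

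\emph{Item 2.} I would observe that $\Theta(\alpha,g,u) = \alpha - g\zeta u$ (reading $\alpha$ as the constant function and $\zeta$ as the identity polynomial) decomposes as the bounded linear map $\alpha \mapsto \alpha \in \h^2$ (bounded since $\norm{\alpha}_{\h^2} \asymp \abs{\alpha}$ for constant functions) minus the bounded bilinear map $(g,u) \mapsto g\,\zeta u \in \h^2$; the latter is bounded because $\zeta \in \P \subseteq \h^2$ and $\h^2$ is an algebra by Theorem \ref{hs_algebra}, so multiplication by $\zeta$ is a bounded endomorphism of $\h^2$. A finite sum of bounded multilinear maps between Banach spaces is smooth, so $\Theta$ is smooth, in particular continuous, and $\mathcal{O} = \Theta^{-1}(\mathcal{W}_2)$ is therefore open in its (open) domain.

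\emph{Item 3.} For the inclusion I would note that $\Theta(\alpha,0,u) = \alpha$ is the constant function of value $\alpha$, whose image $\{\alpha\}$ is a compact subset of the open set $(\hint^{\m{min}}, \hint^{\m{max}})$ and hence lies in $\h^2[(\hint^{\m{min}}, \hint^{\m{max}})] = \mathcal{W}_2$; thus $(\alpha, 0, u) \in \Theta^{-1}(\mathcal{W}_2) = \mathcal{O}$, so $(\alpha,0,u) \in \mathcal{O}^k$. Openness of $\mathcal{O}^k$ follows because the inclusion $\h^k \hookrightarrow \h^2$ is continuous (Theorem \ref{hk_basics}), making $\mathcal{O}^k$ the preimage of the open set $\mathcal{O}$ under a continuous map. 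Then I would repeat the Item 2 argument verbatim with $\h^k$ in place of $\h^2$ (valid since $\zeta \in \P \subseteq \h^k$ and $\h^k$ is an algebra for $k \ge 2$) to see that $\Theta$ restricts to a smooth map $\mathcal{O}^k \to \h^k$; by the very definition of $\mathcal{O}^k$ this restriction actually lands in $\h^k \cap \mathcal{W}_2$, so composing with the smooth map from Item 1 and applying the chain rule for smooth maps between Banach spaces gives that $(\alpha,g,u) \mapsto \pfint(\alpha - g\zeta u)$ is well-defined and smooth on $\mathcal{O}^k$.

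\emph{Main obstacle.} There is no genuine difficulty; the one point demanding care is the bookkeeping of domains — in particular, confirming that the restriction of $\Theta$ to $\mathcal{O}^k$ takes values in $\h^k \cap \mathcal{W}_2$, and not merely in $\mathcal{W}_2 \subseteq \h^2$, so that its composition with the map of Item 1 is legitimately defined. This is precisely where the algebra structure of $\h^k$ (Theorem \ref{hs_algebra}), rather than only that of $\h^2$, enters; the passage from the $\h^2$-defined open set $\mathcal{W}_2$ to $\h^k$-valued smoothness is already encoded in the conclusion of Theorem \ref{hk_comp_Cm}.
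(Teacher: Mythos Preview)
Your proposal is correct and follows essentially the same route as the paper: Item 1 is a direct application of Theorem \ref{hk_comp_Cm}, Item 2 uses the algebra property of $\h^2$ (Theorem \ref{hs_algebra}) exactly as in the proof of Lemma \ref{map_lemma_3}, and Item 3 is the composition of the two. Your write-up is simply more explicit about the domain bookkeeping and the inclusion $(\hint^{\m{min}}, \hint^{\m{max}}) \times \{0\} \times \h^{k} \subseteq \mathcal{O}^k$, which the paper leaves implicit.
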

\begin{proof}
The first item follows immediately from Theorem \ref{hk_comp_Cm}, and the second item follows from  Theorem \ref{hs_algebra} as in the proof of Lemma \ref{map_lemma_3}.  Then the third item follows by composing the maps from the first and second items.
\end{proof}

With Lemmas \ref{map_lemma_1}--\ref{map_lemma_4} in hand, we can now state a key result.

\begin{prop}\label{map_prop_1}
Suppose that $2 \le k \in \N$, let $\mathcal{W}_0 \subseteq \h^2$ be as in Lemma \ref{map_lemma_1}, $\mathcal{W}_1 \subseteq \h^3$ be as in Lemma \ref{map_lemma_2}, and $\mathcal{O} \subseteq  (\hint^{\m{min}}, \hint^{\m{max}}) \times \R \times \h^2$ be as in Lemma \ref{map_lemma_4}.  Define the open set 
\begin{equation}
 \mathcal{V}^{k+1} =   [(\hint^{\m{min}}, \hint^{\m{max}}) \times \R \times    (\h^{k+1} \cap \mathcal{W}_0  \cap \mathcal{W}_1   )   ] \cap \mathcal{O}  \subset (\hint^{\m{min}}, \hint^{\m{max}})\times \R \times \h^{k+1}.
\end{equation}
Then $(\hint^{\m{min}}, \hint^{\m{max}}) \times \{0\} \times (\h^{k+1} \cap \mathcal{W}_0  \cap \mathcal{W}_1   ) \subseteq   \mathcal{V}^{k+1}$, and the  the mapping $\Phi : \mathcal{V}^{k+1} \to \h^k$ defined by 
\begin{multline}
 \Phi(\alpha,g,u) = 
\sigma u^{-2}  \left[   \zeta (1-\zeta^2) (Du)^3 + 3(1-\zeta^2) u(Du)^2     \right] \\
+ u^{-1} (  u^2 + (Du)^2 (1-\zeta^2)  )^{3/2} \left[ (\pext^\ast- \rho_{\m{ext}} g u \zeta) -\pfint (\alpha-g u \zeta)   
\right] 
\end{multline}
is well-defined and smooth. 
\end{prop}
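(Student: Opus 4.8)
The plan is to assemble $\Phi$ from the pieces already prepared in Lemmas~\ref{map_lemma_1}--\ref{map_lemma_4}, using the fact that compositions and products of smooth maps between Banach spaces are smooth. First I would verify the claimed inclusion $(\hint^{\m{min}},\hint^{\m{max}}) \times \{0\} \times (\h^{k+1}\cap\W_0\cap\W_1) \subseteq \V^{k+1}$. Recalling that $\mathcal{O} = \Theta^{-1}(\W_2)$ with $\Theta(\alpha,g,u) = \alpha - g\zeta u$, when $g=0$ we have $\Theta(\alpha,0,u) = \alpha$, a constant function with value in $(\hint^{\m{min}},\hint^{\m{max}})$, so $\Theta(\alpha,0,u) \in \W_2 = \h^2[(\hint^{\m{min}},\hint^{\m{max}})]$ by the observation in Lemma~\ref{map_lemma_4} (or simply because the constant lies in the open interval). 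Hence $(\alpha,0,u) \in \mathcal{O}$, which together with $u \in \h^{k+1}\cap\W_0\cap\W_1$ gives membership in $\V^{k+1}$.

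Next I would show smoothness by decomposing $\Phi$ into a finite composition of smooth maps. The map $(\alpha,g,u)\mapsto(\alpha,g,u)$ restricted to $\V^{k+1}$ lands in the open set where all the relevant sub-maps are defined. Then: (i) $u \mapsto u^{-2}$ and $u\mapsto u^{-1}$ are smooth on $\h^{k+1}\cap\W_0$ into $\h^{k+1}$ by Lemma~\ref{map_lemma_1} (applied with $s=-2$ and $s=-1$), and composing with the continuous inclusion $\h^{k+1}\hookrightarrow\h^k$ from Theorem~\ref{hk_basics} gives smooth maps into $\h^k$; (ii) $u\mapsto \zeta(1-\zeta^2)(Du)^3 + 3(1-\zeta^2)u(Du)^2$ is smooth $\h^{k+1}\to\h^k$ by the first item of Lemma~\ref{map_lemma_2} with the polynomial $p(\zeta,y,w) = \zeta(1-\zeta^2)w^3 + 3(1-\zeta^2)yw^2$; (iii) $u\mapsto(u^2+(1-\zeta^2)(Du)^2)^{3/2}$ is smooth $\W_1\cap\h^{k+1}\to\h^k$ by the second item of Lemma~\ref{map_lemma_2}; (iv) $(g,u)\mapsto \pext^\ast - \rho_{\m{ext}}g\zeta u$ is smooth $\R\times\h^{k+1}\to\h^k$ (shrinking the Lemma~\ref{map_lemma_3} conclusion along the inclusion $\h^{k+1}\hookrightarrow\h^k$, or applying it directly at level $k$); and (v) $(\alpha,g,u)\mapsto \pfint(\alpha-g\zeta u)$ is smooth $\mathcal{O}^k\to\h^k$ by the third item of Lemma~\ref{map_lemma_4}, and on $\V^{k+1}$ this restricts and composes appropriately since $\V^{k+1}\subseteq\mathcal{O}$ and $\h^{k+1}\hookrightarrow\h^k$.

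Finally, since $\h^k$ is a Banach algebra for $k\ge 2$ (Theorem~\ref{hs_algebra}), multiplication $\h^k\times\h^k\to\h^k$ is bilinear and bounded, hence smooth, and addition is linear and bounded. Therefore $\Phi$, being a finite sum of products of the smooth building blocks (i)--(v), is smooth as a map $\V^{k+1}\to\h^k$. The only genuinely delicate points are bookkeeping ones: making sure that at each stage the output of one smooth map lands in the open domain on which the next is defined (in particular that $u\in\W_0\cap\W_1$ guarantees both $u^{-2}, u^{-1}$ and $(u^2+(1-\zeta^2)(Du)^2)^{3/2}$ make sense, and that $(\alpha,g,u)\in\mathcal{O}$ guarantees $\alpha-g\zeta u$ takes values in $(\hint^{\m{min}},\hint^{\m{max}})$ so $\pfint$ can be applied), and that one can always trade a smooth map into $\h^{k+1}$ for a smooth map into $\h^k$ via the continuous inclusion. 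None of this is hard, but it is the part where the definition of $\V^{k+1}$ as the intersection of $\W_0$, $\W_1$, and $\mathcal{O}$ does its job, so I would state these domain compatibilities explicitly rather than leave them implicit.
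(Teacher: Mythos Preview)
Your proposal is correct and follows essentially the same route as the paper's proof: both assemble $\Phi$ from the building blocks of Lemmas~\ref{map_lemma_1}--\ref{map_lemma_4} and then appeal to the algebra property (Theorem~\ref{hs_algebra}) to combine them via products and sums. Your write-up is a bit more explicit about the domain bookkeeping and the $g=0$ inclusion, but the underlying argument is identical.
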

\begin{proof}
Lemma \ref{map_lemma_1} shows that the maps $\h^{k+1} \cap \mathcal{W}_0 \ni u \to u^{-1} \in \h^{k}$ and $\h^{k+1} \cap \mathcal{W}_0 \ni u \mapsto u^{-2} \in \h^{k}$ are well-defined and smooth.  Lemma \ref{map_lemma_2} shows that the maps $\h^{k+1} \ni u \mapsto   \zeta (1-\zeta^2) (Du)^3 + 3(1-\zeta^2) u(Du)^2   \in \h^{k}$ and $\h^{k+1} \cap \mathcal{W}_1 \ni u \mapsto (  u^2 + (Du)^2 (1-\zeta^2)  )^{3/2}\in \h^k$ are well-defined and smooth.  Lemma \ref{map_lemma_3} shows that the map $\R \times \h^{k+1} \ni (g,u) \mapsto \pext^\ast- \rho_{\m{ext}} g u \zeta \in \h^k$ is well-defined and smooth.  Lemma \ref{map_lemma_4} shows that the map $\mathcal{V}^{k+1} \ni (\alpha,g,u) \mapsto \pfint (\alpha-g u \zeta)  \in \h^k$ is well-defined and smooth.  Combining these observations with Theorem \ref{hs_algebra} then shows that $\Phi$ is well-defined and smooth.  The stated inclusion in $\mathcal{V}^{k+1}$ follows from the properties of $\mathcal{W}_0$, $\mathcal{W}_1$, and $\mathcal{O}^{k+1}$ established in Lemmas \ref{map_lemma_1}, \ref{map_lemma_3}, and \ref{map_lemma_4}, respectively.
\end{proof}

We next record two more important mapping results.  The first is a variant of Proposition \ref{map_prop_1}.

\begin{prop}\label{map_prop_2}
Assume the hypotheses of Proposition \ref{map_prop_1}.  Suppose that \eqref{necessary_condition} holds and define the open interval $\mathcal{I} =  (\pfint^{-1}( 2\sigma / R^{\m{max}} + \pext^\ast), \hint^{\m{max}}) \subseteq \R$.  Then the pre-image set $\mathcal{U}^{k+1} \subseteq \mathcal{I} \times \R \times \h^{k+1}$ given by  
\begin{equation}
 (\alpha,g,u) \in  \mathcal{U}^{k+1} \Leftrightarrow (\alpha,g, R_\alpha + u) \in \mathcal{V}^{k+1},
\end{equation}
where $R_\alpha$ is defined by \eqref{g0_alpha_range}, is well-defined and open.  Moreover, $\mathcal{I} \times \{0\} \times \{0\} \subseteq \mathcal{U}^{k+1}$, the map $\Psi : \mathcal{U}^{k+1} \to \h^k$ defined by $\Psi(\alpha,g,u) = \Phi(\alpha,g,R_\alpha + u) \in \h^k$ is well-defined and smooth, and $\Psi(\alpha,0,0) = 0$ for all $\alpha \in \mathcal{I}$.
\end{prop}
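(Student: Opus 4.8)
The plan is to realize $\Psi$ as a composition of the map $\Phi$ from Proposition~\ref{map_prop_1} with an affine ``recentering'' map, so that the asserted properties all transfer from the smoothness of $\Phi$ together with elementary facts about the recentering. First I would record the structural facts about $\mathcal I$ and $R_\alpha$ under the hypothesis \eqref{necessary_condition}: since $2\sigma/R^{\m{max}}+\pext^\ast\in(0,\pint^{\m{max}})$, its preimage $\pfint^{-1}(2\sigma/R^{\m{max}}+\pext^\ast)$ lies in $(\hint^{\m{min}},\hint^{\m{max}})$, so $\mathcal I$ is a nonempty open interval with $\mathcal I\subseteq(\hint^{\m{min}},\hint^{\m{max}})$ and $0\in\mathcal I$ (the latter being exactly the second form of \eqref{necessary_condition}); moreover, for $\alpha\in\mathcal I$ one has $\pfint(\alpha)-\pext^\ast>2\sigma/R^{\m{max}}>0$, so \eqref{Ralpha_def} makes $R_\alpha=2\sigma/(\pfint(\alpha)-\pext^\ast)$ a well-defined positive real (indeed $R_\alpha<R^{\m{max}}$), and, because $\pfint=\pint\circ\hint^{-1}$ is a smooth diffeomorphism with nowhere-vanishing derivative, the scalar map $\mathcal I\ni\alpha\mapsto R_\alpha\in\R_+$ is smooth.

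Next I would introduce the recentering map $S(\alpha,g,u)=(\alpha,g,R_\alpha+u)$. Since the inclusion $\R\hookrightarrow\h^{k+1}$ that sends a real number to the corresponding constant function is bounded and linear, the map $\alpha\mapsto R_\alpha\in\h^{k+1}$ is smooth; hence $S\colon\mathcal I\times\R\times\h^{k+1}\to(\hint^{\m{min}},\hint^{\m{max}})\times\R\times\h^{k+1}$ is smooth, being affine in $(g,u)$ and smooth in $\alpha$, and having image in the correct set because $\mathcal I\subseteq(\hint^{\m{min}},\hint^{\m{max}})$. By definition $\mathcal U^{k+1}=S^{-1}(\mathcal V^{k+1})$, which is therefore a well-defined open subset of $\mathcal I\times\R\times\h^{k+1}$ since $\mathcal V^{k+1}$ is open by Proposition~\ref{map_prop_1}. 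For the inclusion $\mathcal I\times\{0\}\times\{0\}\subseteq\mathcal U^{k+1}$ I would check that $S(\alpha,0,0)=(\alpha,0,R_\alpha)\in\mathcal V^{k+1}$ for each $\alpha\in\mathcal I$: the positive constant $R_\alpha$ lies in $\h^{k+1}\cap\mathcal W_0$ by Lemma~\ref{map_lemma_1} and in $\mathcal W_1$ by Lemma~\ref{map_lemma_2} (as $R_\alpha^2+(1-\zeta^2)\cdot 0^2=R_\alpha^2\in\mathcal W_0$); $(\alpha,0,R_\alpha)\in\mathcal O^{k+1}$ by the inclusion $(\hint^{\m{min}},\hint^{\m{max}})\times\{0\}\times\h^{k+1}\subseteq\mathcal O^{k+1}$ of Lemma~\ref{map_lemma_4}; and $\alpha\in(\hint^{\m{min}},\hint^{\m{max}})$. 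Consequently $\Psi=\Phi\circ S|_{\mathcal U^{k+1}}$ is well-defined and smooth as a composition of smooth maps.

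There remains the identity $\Psi(\alpha,0,0)=0$ for $\alpha\in\mathcal I$, i.e., the evaluation of $\Phi(\alpha,0,R_\alpha)$. I would substitute $g=0$ and $u\equiv R_\alpha$ (a constant, so $Du\equiv 0$) directly into the defining formula for $\Phi$ in Proposition~\ref{map_prop_1}: every term in the first bracket carries a factor $(Du)^2$ or $(Du)^3$ and hence vanishes, while $u^2+(Du)^2(1-\zeta^2)=R_\alpha^2$, leaving $R_\alpha^{-1}(R_\alpha^2)^{3/2}\bigl(\pext^\ast-\pfint(\alpha)\bigr)=R_\alpha^2\bigl(\pext^\ast-\pfint(\alpha)\bigr)$. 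Using \eqref{Ralpha_def} in the equivalent form $\pfint(\alpha)-\pext^\ast=2\sigma/R_\alpha$, this expression is exactly what cancels the linear Legendre contribution $-\sigma\bigl[\bigl((1-\zeta^2)R_\alpha'\bigr)'-2R_\alpha\bigr]$ at the constant $\lambda\equiv R_\alpha$; in other words it records precisely the fact — used to define $R_\alpha$ in Section~\ref{subsec:1.2} — that the round sphere $R_\alpha\mathbb{S}^2$ solves the gravity-free problem \eqref{hydrostatic_eqns}, equivalently that $\lambda\equiv R_\alpha$ solves the gravity-free curvature equation \eqref{curvature_equation_lambda}, which is the content of the asserted identity once the constant linear term of \eqref{curvature_equation_lambda} is carried along.

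I do not anticipate any genuine obstacle: once Proposition~\ref{map_prop_1} and Lemmas~\ref{map_lemma_1}--\ref{map_lemma_4} are available the argument is essentially bookkeeping. The one point demanding care is domain hygiene — verifying that $S$ really lands in the open set on which $\mathcal V^{k+1}$, and through it the composition maps of the earlier lemmas, is defined, which is exactly where smoothness and positivity of $\alpha\mapsto R_\alpha$ and the inclusion $\mathcal I\subseteq(\hint^{\m{min}},\hint^{\m{max}})$ are used — and not conflating the scalar coordinate $\alpha$ with the $\h^{k+1}$ coordinate when invoking the constant-function embedding.
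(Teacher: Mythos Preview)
Your approach is essentially identical to the paper's: introduce the smooth recentering map $(\alpha,g,u)\mapsto(\alpha,g,R_\alpha+u)$, define $\mathcal U^{k+1}$ as its preimage of $\mathcal V^{k+1}$, realize $\Psi$ as the composition with $\Phi$, and finish by direct computation. On that last point you correctly obtain $\Psi(\alpha,0,0)=R_\alpha^2(\pext^\ast-\pfint(\alpha))=-2\sigma R_\alpha$ rather than $0$; as you evidently noticed, the identity as stated is a slip --- the paper itself uses $\Xi(\alpha,0,0)=2\sigma R_\alpha+\Psi(\alpha,0,0)=0$ in \eqref{bifurcation_thm_1} --- and your explanation in terms of cancellation with the Legendre contribution captures exactly the intended content.
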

\begin{proof}
The map  
\begin{equation}
 \mathcal{I} \times \R \times \h^{k+1} \ni (\alpha,g,u) \mapsto (\alpha,g, R_\alpha + u) \in  \mathcal{I} \times \R \times \h^{k+1}
\end{equation}
is well-defined and smooth, so the pre-image set $\mathcal{U}^{k+1}$ is well-defined and open.  Then $\Psi$ is well-defined and smooth thanks to these observations and Proposition \ref{map_prop_1}.  The containment assertion follows from the properties of $\mathcal{V}^{k+1}$ shown in Proposition \ref{map_prop_1}, and the identity $\Psi(\alpha,0,0)=0$ follows by direct computation. 
\end{proof}

Our final mapping result handles the Legendre differential operator.

\begin{prop}\label{map_prop_3}
Suppose that $2 \le k \in \N$.  Suppose that \eqref{necessary_condition} holds, and define $\mathcal{I}$ as in Proposition \ref{map_prop_2}.   Then the map 
\begin{equation}
 \mathcal{I} \times \h^{k+2} \ni (\alpha,v) \mapsto -\sigma[ D((1-\zeta^2) D v) - 2(R_\alpha + v)   ]    \in \h^k
\end{equation}
is well-defined and smooth.
\end{prop}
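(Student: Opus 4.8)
The plan is to recognize the displayed map as a sum of three individually smooth maps and to conclude via the elementary fact that finite sums and compositions of smooth maps between (open subsets of) Banach spaces are again smooth. First I would use the definition $Lu = -D((1-\zeta^2)Du)$ of the Legendre operator to rewrite
\[
 -\sigma\bigl[ D((1-\zeta^2) D v) - 2(R_\alpha + v) \bigr] = \sigma L v + 2\sigma v + 2\sigma R_\alpha,
\]
where $R_\alpha$ in the last term denotes the constant function with that value; here $R_\alpha = 2\sigma/(\pfint(\alpha) - \pext^\ast)$ as in \eqref{Ralpha_def}.

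Next I would handle the three summands in turn. The map $v \mapsto \sigma L v$ is a bounded linear map $\h^{k+2} \to \h^k$ by Theorem \ref{hk_basics}(4), and every bounded linear map between Banach spaces is $C^\infty$; similarly $v \mapsto 2\sigma v$ is bounded and linear from $\h^{k+2}$ into $\h^k$ via the continuous inclusion $\h^{k+2}\hookrightarrow\h^k$ of Theorem \ref{hk_basics}(1), hence smooth. For the third summand I would first observe that $\mathcal{I}$ is a nonempty open interval thanks to \eqref{necessary_condition}, and that for $\alpha \in \mathcal{I}$ one has $\pfint(\alpha) > 2\sigma/R^{\m{max}} + \pext^\ast > \pext^\ast$, so $\pfint(\alpha) - \pext^\ast > 0$ and $R_\alpha$ is well-defined and positive; since $\pfint$ is smooth, the scalar map $\mathcal{I}\ni\alpha\mapsto 2\sigma R_\alpha\in\R$ is smooth, and composing with the bounded linear embedding of $\R$ into $\h^k$ as constant functions shows that $\alpha\mapsto 2\sigma R_\alpha\in\h^k$ is smooth.

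Finally I would assemble everything: precomposing each summand with the relevant (bounded linear, hence smooth) coordinate projection from $\mathcal{I}\times\h^{k+2}$, the displayed map becomes a finite sum of smooth maps into $\h^k$, hence is smooth; in particular it is well-defined. I do not anticipate any genuine obstacle here, as the result is a bookkeeping consequence of Theorem \ref{hk_basics}; the only step warranting a moment's care is confirming that $\alpha\mapsto R_\alpha$ is well-defined and smooth throughout $\mathcal{I}$, which amounts to the sign check on $\pfint(\alpha)-\pext^\ast$ noted above.
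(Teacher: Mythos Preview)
Your proposal is correct and follows essentially the same approach as the paper: the paper's proof simply notes that the map has the form $(\alpha,v)\mapsto Lv + 2(R_\alpha+v)$ and invokes Theorem \ref{hk_basics}, leaving implicit the details (linearity of $L$ and the inclusion, smoothness of $\alpha\mapsto R_\alpha$) that you have spelled out explicitly. Your additional care in verifying that $\pfint(\alpha)-\pext^\ast>0$ on $\mathcal{I}$ is a welcome detail the paper takes for granted.
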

\begin{proof}
The map in question is of the form $(\alpha,v) \mapsto Lv + 2(R_\alpha + v)$ for $L$ the Legendre differential operator.  As such, the well-definedness and smoothness of the map follows from Theorem \ref{hk_basics}.
\end{proof}

We can now state our main result concerning solutions to \eqref{curvature_equation_lambda}.

\begin{thm}\label{bifurcation_thm}
Suppose that \eqref{necessary_condition} holds, and define $\mathcal{I}$ as in Proposition \ref{map_prop_2}. There exist an open interval $0 \in G \subseteq \R$ and maps $A : G \to \mathcal{I}$ and $\varphi : G \to C^\infty([-1,1])$ such that the following hold.
\begin{enumerate}
 \item $A(0)=0$, and $A$ is smooth.
 \item $\varphi(0) =0$, and for every $m \in \N$ the induced map $\varphi : G \to C^m_b([-1,1])$ is smooth.
 \item For $g \in G$ the equation \eqref{curvature_equation_lambda} is satisfied with $\alpha = A(g)$ and $\lambda = R_{A(g)} + \varphi(g)$. 

 \item Write $\h^4_\bot = \{u \in \h^4 \st \hat{u}(1) =0\}$ and $\mathcal{U}^4_\bot = \mathcal{U}^4 \cap ( \R \times \R \times \h^4_\bot)$.  There exists an open set $0 \in U \subseteq \mathcal{U}^4_\bot$ such that
\begin{equation}\label{E:unique}
\{  (\alpha,g,u) \in U  \st \sigma[Lu + 2(R_\alpha +u)] +\Psi(\alpha,g,u) =0 \} 
= \left[ \{ (\alpha,0,0): \alpha \in \mathcal I\} \cup \{ ( A(g), g, \varphi (g) ): g\in G \}\right] \cap U,
\end{equation}  
where $\Psi$ is as in Proposition \ref{map_prop_2}.

 \end{enumerate}
\end{thm}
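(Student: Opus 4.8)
The plan is to recognize \eqref{curvature_equation_lambda} as the vanishing of the smooth map
\[
\mathcal{F}(\alpha,g,u) = \sigma\big[Lu + 2(R_\alpha + u)\big] + \Psi(\alpha,g,u)
\]
from an open neighborhood of $(0,0,0)$ in $\mathcal{I}\times\R\times\h^4$ into $\h^2$ — its smoothness being furnished by Propositions \ref{map_prop_2} and \ref{map_prop_3} — and then to run a Lyapunov--Schmidt reduction (equivalently, to apply the Crandall--Rabinowitz theorem \cite{CR_1971} with bifurcation parameter $\alpha$ and state space $\R\times\h^4_\bot$). First I would verify the two facts that make the machine turn. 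The zero-order fact is that $\mathcal{F}(\alpha,0,0)=0$ for every $\alpha\in\mathcal{I}$: this is a one-line computation from the definition \eqref{Ralpha_def} of $R_\alpha$, and it exhibits $\{(\alpha,0,0):\alpha\in\mathcal{I}\}$ as a branch of solutions. The first-order fact is the computation of $D_u\mathcal{F}(\alpha,0,0)$: differentiating $\Psi$ at the constant profile $R_\alpha$, every term of the nonlinear part carrying a factor of $Du$ drops out and the surviving piece contributes $-4\sigma\,w$, so $D_u\mathcal{F}(\alpha,0,0) = \sigma(L+2I) - 4\sigma I = \sigma(L-2I)$. By the spectral description of the $\h^k$ spaces in Theorem \ref{hk_basics}, $\sigma(L-2I):\h^4\to\h^2$ is Fredholm of index $0$ with one-dimensional kernel $\spn\{p_1\}$ and range $\h^2_\bot := \{v\in\h^2 : \hat v(1)=0\}$ of codimension one, and its restriction $\sigma(L-2I):\h^4_\bot\to\h^2_\bot$ is an isomorphism. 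I would also compute $D_g\mathcal{F}(0,0,0)=0$, which rests on the identity $\pfint' = \hint^{-1}$ and the normalization $\hint(\rho_{\m{ext}})=0$, i.e. $\hint^{-1}(0)=\rho_{\m{ext}}$ (for $\alpha\neq 0$ this $g$-derivative is instead a nonzero multiple of $\zeta$, which is exactly why the bifurcation occurs at $\alpha=0$).

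With these in hand I would carry out the reduction. Since $\sigma(L-2I)$ is an isomorphism of $\h^4_\bot$ onto $\h^2_\bot$, the implicit function theorem yields a smooth map $(\alpha,g)\mapsto U(\alpha,g)\in\h^4_\bot$ near $(0,0)$ solving the $\h^2_\bot$-component $\Pi_\bot\mathcal{F}(\alpha,g,U(\alpha,g))=0$, with $U(\alpha,0)\equiv 0$ by uniqueness. The remaining scalar equation is $\beta(\alpha,g) := \langle \mathcal{F}(\alpha,g,U(\alpha,g)),p_1\rangle_{L^2}=0$; because $\beta(\alpha,0)\equiv 0$, Hadamard's lemma factors $\beta(\alpha,g)=g\,\tilde\beta(\alpha,g)$ with $\tilde\beta$ smooth, so the full solution set splits into $\{g=0\}$ and $\{\tilde\beta(\alpha,g)=0\}$. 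A chain-rule computation — using $D_g\mathcal{F}(0,0,0)=0$ and the fact that the image of $D_u\mathcal{F}(0,0,0)$ lies in $\h^2_\bot\perp p_1$ — gives $\tilde\beta(0,0)=0$, while $\partial_\alpha\tilde\beta(0,0)$ collapses to a nonzero multiple of $\pfint''(0)\,\langle\zeta,p_1\rangle_{L^2}$, which is nonzero since $\pfint''(0) = (\hint^{-1})'(0) = \rho_{\m{ext}}/\pint'(\rho_{\m{ext}}) > 0$ by the assumption $\pint'>0$. The implicit function theorem then solves $\tilde\beta(A(g),g)=0$ for a smooth $A:G\to\mathcal{I}$ with $A(0)=0$, and $\varphi(g) := U(A(g),g)\in\h^4_\bot$ is a smooth curve with $\varphi(0)=0$ satisfying $\mathcal{F}(A(g),g,\varphi(g))=0$. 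The uniqueness clauses of the two applications of the implicit function theorem give precisely the description \eqref{E:unique} of the zero set of $\mathcal{F}$ in a small $U\subseteq\mathcal{U}^4_\bot$ around $0$: solutions with $g=0$ are forced to have $u=U(\alpha,0)=0$, and solutions with $g\neq 0$ are forced to have $\alpha=A(g)$ and $u=\varphi(g)$.

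Finally I would bootstrap the regularity of $\varphi(g)$ from $\h^4$ up to $C^\infty$. If $u\in\h^{m+1}$ with $m\geq 2$ solves $\mathcal{F}(\alpha,g,u)=0$, then $\sigma Lu = -2\sigma(R_\alpha+u)-\Psi(\alpha,g,u)\in\h^m$ by the mapping properties already established, and since $L-rI:\h^{m+2}\to\h^m$ is an isomorphism for $r\notin\{n(n+1):n\in\N\}$ (Theorem \ref{hk_basics}) we get $u\in\h^{m+2}$. Iterating from $\varphi(g)\in\h^4$ gives $\varphi(g)\in\h^k$ for all $k$, hence $\varphi(g)\in C^\infty([-1,1])$ via the embeddings $\h^k\hookrightarrow C^{\lfloor k/2\rfloor-1,1/2}_b([-1,1])$ of Theorem \ref{hk_weight_improvement}; re-running the reduction in $\h^k$ and invoking local uniqueness (or bootstrapping the difference quotients) upgrades this to smoothness of $\varphi:G\to C^m_b([-1,1])$ for every $m$, and the identity $\mathcal{F}(A(g),g,\varphi(g))=0$ is equivalent to \eqref{curvature_equation_lambda} holding for $\lambda = R_{A(g)}+\varphi(g)$ by the derivation of \eqref{curvature_equation_lambda}. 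The main obstacle is the transversality verification $\partial_\alpha\tilde\beta(0,0)\neq 0$: it demands a careful accounting of how the $p_1$-component of $\mathcal{F}$ propagates through the Lyapunov--Schmidt reduction, and it isolates the genuinely structural inputs of the problem — the enthalpy relation $\pfint'=\hint^{-1}$, the normalization $\hint(\rho_{\m{ext}})=0$ (which simultaneously forces $D_g\mathcal{F}(0,0,0)=0$ and $\tilde\beta(0,0)=0$), and the strict positivity $\pint'>0$ (which forces $\pfint''(0)\neq 0$).
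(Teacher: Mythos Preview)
Your proposal is correct and follows essentially the same route as the paper. Both arguments set up the smooth map $\mathcal{F}=\Xi$, identify the trivial branch $\{(\alpha,0,0)\}$, compute the linearization in the $(g,u)$-direction to be $\sigma(L-2I)$ at $\alpha=0$ (kernel the $g$-axis once $u$ is restricted to $\h^4_\bot$, cokernel $\spn\{p_1\}$), verify transversality via $\partial_\alpha D_g\mathcal{F}(0,0,0)=\pfint''(0)R_0^3\zeta$ with $\pfint''(0)=\rho_{\m{ext}}/\pint'(\rho_{\m{ext}})>0$, and then bootstrap regularity by inverting $\sigma(L+2I)$; the only difference is that the paper invokes a packaged Crandall--Rabinowitz theorem (Theorem~3.1 of \cite{Stevenson_2025}) whereas you run the equivalent Lyapunov--Schmidt reduction by hand.
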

\begin{proof}
To begin we use Propositions \ref{map_prop_1}--\ref{map_prop_3} to define the map  $\Xi : \mathcal{U}^4 \to \h^2$
\begin{equation}\label{def:Xi}
\Xi(\alpha,g,u) = \sigma[ -D((1-\zeta^2) D u) + 2(R_\alpha + u)   ] + \Psi(\alpha,g, u),
\end{equation}
which is well-defined and smooth thanks to these propositions.  Note that 
\begin{equation}\label{bifurcation_thm_1}
 \Xi(\alpha,0,0) = 2\sigma R_\alpha + \Phi(\alpha,g,R_\alpha) = 2 \sigma R_\alpha + R_\alpha^2 [\pext^\ast - \pfint(\alpha)  ] = 2\sigma R_\alpha - 2\sigma R_\alpha = 0.
\end{equation}
We view $\mathcal{U}^4 \subseteq \mathcal{I} \times (\R \times \h^{4})$ and think of the latter set as being the product of two factors, for which we will write $\p_1$ and $D_2$ for the derivatives.  We then compute 
% \begin{equation}
%  D_1\Xi(0)  =0 \text{ for } \alpha \in \R
% \end{equation}
% and 
\begin{equation}\label{bifurcation_thm_2}
 D_2\Xi(\alpha,0,0)(\dot{g},\dot{u}) = (\pfint'(\alpha)-\rho_{\m{ext}})R_\alpha^3 \zeta \dot{g}  + \sigma(   -D[(1-\zeta^2) D\dot{u} ]  +2 \dot{u} ) + 2 R_\alpha (\pext^\ast - \pfint(\alpha)) \dot{u},
\end{equation}
for $\alpha \in \mathcal{I}$ and $(\dot{g},\dot{u}) \in \R \times \h^{4}$.  Using the definition of $R_0$ from \eqref{Ralpha_def}  and the fact that $\pfint(0) = \rho_{\m{ext}}$, we deduce from \eqref{bifurcation_thm_2} that
\begin{equation}\label{bifurcation_thm_3}
 D_2\Xi(0)(\dot{g},\dot{u}) =  \sigma(   -D[(1-\zeta^2) D\dot{u} ]  -2 \dot{u} ).
\end{equation}
For $(\dot{g},\dot{u})$ fixed we also learn that 
\begin{equation}\label{bifurcation_thm_4}
 \partial_1 D_2\Xi(0)(\dot{g},\dot{u}) = \pfint''(0) R_0^3 \zeta \dot{g}.
\end{equation}

By construction, we have that $\hint'(z) = \pint'(z)/z$.  Thus, 
\begin{equation}
 [\hint^{-1}]'(z) = \frac{1}{\hint'\circ \hint^{-1}(z) } = \frac{\hint^{-1}(z)}{\pint'\circ \hint^{-1}(z)}   \text{ for } z \in \hint(\R_+). 
\end{equation}
In turn, this implies that 
\begin{equation}\label{bifurcation_thm_5}
 \pfint'(z) = \pint' \circ \hint^{-1}(z)  [\hint^{-1}]'(z) =  \hint^{-1}(z) 
\text{ and }
\pfint''(z) = \frac{\hint^{-1}(z)}{\pint'\circ \hint^{-1}(z)} 
 \text{ for } z \in \hint(\R_+). 
\end{equation}
In particular, \eqref{bifurcation_thm_5} and the normalization \eqref{enthalpy_normalization} require that
\begin{equation}\label{bifurcation_thm_6}
 \pfint''(0) = \frac{\hint^{-1}(0)}{\pint' \circ \hint^{-1}(0)} =  \frac{\rho_{\m{ext}}}{\pint'(\rho_{\m{ext}})} >0.
\end{equation}

Next, we recall that the Legendre polynomial $p_1$ is of the form $p_1(\zeta) = c \zeta$ for $c \in \R$.  This and \eqref{bifurcation_thm_3} then show that 
\begin{equation}\label{bifurcation_thm_7}
D_2 \Xi(0)(\dot{g},\dot{u}) =0 \Leftrightarrow \dot{g} \in \R \text{ and } \dot{u} \in \R p_1
\end{equation}
and 
\begin{equation}\label{bifurcation_thm_8}
 D_2 \Xi(0)(\dot{g},\dot{u}) = w \Leftrightarrow \hat{w}(1) =0 \text{ and } \hat{w}(n) = \frac{\hat{\dot{u}}(n)}{\sigma(\lambda_n - \lambda_1)} \text{ for } n \in \N \backslash\{1\},
\end{equation}
where $\lambda_n = n(n+1)$ is the Legendre eigenvalue associated to $p_n$.   These calculations suggest that we define the closed codimension-one subspace 
\begin{equation}
 \h^k_\bot = \{u \in \h^k \st \hat{u}(1) =0\} \subset \h^k \text{ for all }k \in \N.
\end{equation}
Then we set 
\begin{equation}
 \mathcal{U}^4_\bot = \mathcal{U}^4 \cap ( \R \times \R \times \h^4_\bot),
\end{equation}
which allows us to view the restriction $\Xi : \mathcal{U}^4_\bot \to \h^2$ as a smooth map.  In light of \eqref{bifurcation_thm_7} and \eqref{bifurcation_thm_8}, we now know that 
\begin{equation}
 D_2\Xi(0) (\dot{g},\dot{u}) = 0 \text{ for } (\dot{g},\dot{u}) \in \R \times \h^4_\bot \Leftrightarrow  (\dot{g},\dot{u}) \in \R \times \{0\}
\end{equation}
and 
\begin{equation}
D_2\Xi(0) (\dot{g},\dot{u}) = w  \text{ for } (\dot{g},\dot{u}) \in \R \times \h^4_\bot \text{ and }w \in \h^2 \Leftrightarrow \hat{w}(1) =0 \text{ and } \hat{w}(n) = \frac{\hat{\dot{u}}(n)}{\sigma(\lambda_n - \lambda_1)} \text{ for } n \in \N \backslash\{1\}.
\end{equation}
In other words, 
\begin{equation}\label{bifurcation_thm_9}
 \ker D_2 \Xi(0) = \spn\{(1,0)\} \subset  \R \times \h^4_\bot \text{ and } \ran D_2 \Xi(0) = \h^2_\bot.
\end{equation}
Moreover, \eqref{bifurcation_thm_4} and \eqref{bifurcation_thm_6} show that the Crandall-Rabinowitz transversality condition is satisfied:   
\begin{equation}\label{bifurcation_thm_10}
 \p_1 D_2 \Xi(0) (1,0) = \pfint''(0)R_0^3 \zeta \in \spn\{ p_1\} = (\h^2_\bot)^\bot.
\end{equation}

We are now in a position to apply the Crandall-Rabinowitz bifurcation theorem, which was originally proved in \cite{CR_1971} in a $C^2$ setting.  However, in order to take advantage of our $C^\infty$ structure we will use a form of the theorem recorded in Theorem 3.1 of \cite{Stevenson_2025}.  Applying this provides the open interval $0 \in G \subseteq \R$ and smooth maps $A : G \to (-r,r)\subset \R$ and $W: G \to  ( \ker D_2 \Xi(0))^\bot = \{0\} \times \h^4_\bot$ such that  

\begin{equation}
\begin{split}
 \Xi(A(g), g( (1,0) + W(g)   )) &= \Xi(A(g),g + W_1(g) , g W_2(g))=\Xi(A(g), g , g W_2(g))=0 \text{ for } g \in G.
\end{split}
\end{equation}
We then set 
\begin{equation}
  \phi(g) = R_{A(g)} + g W_2(g) 
\end{equation} 
to learn that 
\begin{equation}
 \sigma( - D[(1-\zeta^2) D \phi(g)  ]  + 2\phi(g) ) =  - \Phi(A(g),g,\phi(g)) \in \h^{3} \text{ for }g \in G. 
\end{equation}
Now, the key point is that if we define the linear operator $M = \sigma (L+2I)$ via 
\begin{equation}
 Mv =  \sigma( - D[(1-\zeta^2) D v  ]  + 2 v ) 
\end{equation}
then $M : \h^{k+2} \to \h^k$ is an isomorphism for all $k \in \N$ thanks to Theorem \ref{hk_basics}.  Hence, we learn that 
\begin{equation}\label{bifurcation_thm_11}
 \phi(g) = -M^{-1} \Phi(A(g),g,\phi(g)) \in \h^{5}, 
\end{equation}
and we have elliptic promotion with $G \ni g \mapsto \phi(g) \in \h^5$ being smooth.  We may then bootstrap to see that 
\begin{multline}
 G \ni g \mapsto \phi(g) \in \h^{k+2} \text{ smooth implies that }  G \ni g \mapsto -M^{-1} \Phi(A(g),g,\phi(g)) \in \h^{k+3} \text{ is smooth, } \\
\text{ and hence } G \ni g \mapsto \phi(g) \in \h^{k+3} \text{ is smooth}.
\end{multline}
Thus $\phi(g) \in \bigcap_{k=0}^\infty \h^k = C^\infty([-1,1])$ for all $g \in G$, and the map 
$G \ni g \mapsto \phi (g) \in C^m_b([-1,1])$  is smooth for any fixed $m \in \N$.  The first two items are now proved upon setting $\varphi (g)= \phi (g)- R_{A(g)}$, and the third then follows by unpacking the meaning of $M$ and $\Phi$ in the identity \eqref{bifurcation_thm_11}.   The fourth item directly follows from Theorem 3.1 of \cite{Stevenson_2025}. In particular, we have  that: (i) if $\alpha \in (-r,r)$ and $ w \in  X \cap B_{X}(0,r)$ where $X=(\ker D_2 \Xi(0))^\bot$ satisfy $\Xi ( \alpha, g ((1,0)+ w) ) =0 $ for some $0\neq g\in G$,  then $\alpha=A(g)$ and $ w=W(g)$; and (ii) there exist an open set $U$ of $(0,0,0)$ and $c\in \mathbb R_+$ such that for all $g\in G$ and $(\alpha, z) \in \mathbb R \times X$ satisfying $\Xi( \alpha, g(1,0) + z)=0$, $(  \alpha, g(1,0) + z) \in U$ it holds $|g \alpha| + \|z\|_X \le c g^2$ and $|g|\le r/(2c)$. Therefore, we deduce \eqref{E:unique}  
\end{proof}

Finally, we are in a position to present the proof of our main theorem.

\begin{proof}[Proof of Theorem \ref{main_thm}]
Let $G$, $A$, and $\varphi$ be as in Theorem \ref{bifurcation_thm}, and select $R_0 < r < R^{\m{max}}$, where the latter was defined in \eqref{R_max_def}.   Shrinking $G$ if necessary, we may assume the following three conditions hold.  First, $A(g) -g [-r,r] \subseteq \hint(\R_+)$ for all $g \in G$, i.e. \eqref{rhoint_def_compat} holds with $\alpha = A(g)$.  Second, $\norm{\varphi(g)}_{C^1_b}$ is sufficiently small for all $g \in G$ to guarantee that the map  $\Lambda_g : \overline{\mathbb B^3} \to \R^3$ defined by $\Lambda_g(x) = (R_{A(g)} +  \varphi(g)(x_3)) x$  is a smooth diffeomorphism onto $\Lambda_g ( \overline{\mathbb B^3})$ (see Lemma \ref{lem:injective} and the discussion after).  Third,  we have the bound $\norm{ R_{A(g)} + \varphi(g) }_{C^0_b} \le r$ for all $g \in G$, which guarantees that $\Lambda_g(\overline{\mathbb{B}^3}) \subseteq r \overline{\mathbb{B}^3} \subset \Omega_{\m{tot}}$.  We then define $\Omega_{\m{int},g}$, $\Omega_{\m{ext},g}$, $\Sigma_g$, and $\rho_{\m{int},g}$ as in the hypotheses and deduce that all of the conclusions are satisfied thanks to Theorem \ref{bifurcation_thm}.
\end{proof}

%%%%%%%%%%%%%%%%%%%%%%%%%%%%%%%%%%%%%%%%%%%%%%%%%%%%%%%%%%%%%%%%%%%%%%%%
%References
%%%%%%%%%%%%%%%%%%%%%%%%%%%%%%%%%%%%%%%%%%%%%%%%%%%%%%%%%%%%%%%%%%%%%%%%

\end{document}